\newtheorem{theorem}{Theorem}[section]
\newtheorem{lemma}[theorem]{Lemma}
\newtheorem{proposition}[theorem]{Proposition}
\newtheorem{corollary}[theorem]{Corollary}
\newtheorem{conjecture}[theorem]{Conjecture}
\theoremstyle{definition}
\newtheorem{definition}[theorem]{Definition}
\theoremstyle{remark}
\newtheorem{remark}[theorem]{Remark}
\numberwithin{equation}{section}
\newcommand{\D}{D\ }   
\newcommand{\lsp}{\mathrm{LSP}}
\newcommand{\aSn}{\widetilde{\mathcal{S}}_{n}}
\newcommand{\Sn}{\mathcal{S}_{n}}
\newcommand{\outside}{outside}
\newcommand{\SYT}{\mathrm{SYT}}
\newcommand{\SST}{\mathrm{SST^{*}}}
\newcommand{\LLT}{\mathrm{LLT}}
\newcommand{\fl}{\mathrm{fl}}
\newcommand{\cl}{\mathrm{cl}}
\newcommand{\spin}{\mathrm{spin}}
\newcommand{\cospin}{\mathrm{cospin}}
\newcommand{\inv}{\mathrm{inv}}
\newcommand{\Inv}{\mathrm{Inv}}
\newcommand{\Des}{\mathrm{Des}}
\newcommand{\maj}{\mathrm{maj}}
\newcommand{\vp}{\ensuremath \varphi}
\newcommand{\swap}{\mathrm{swap}}
\newcommand{\basic}{\mathrm{bswap}}
\newcommand{\snake}{\mathrm{snake}}
\newcommand{\starswap}{\mathrm{star}}
\newcommand{\double}{\mathrm{double}}
\newcommand{\core}{\mathcal{C}}
\newcommand{\aff}{\mathcal{A}}
\newcommand{\Trans}[1]{T(#1)}
\newcommand{\G}{\mathcal{G}}
\newcommand{\chs}[2]{
\left(\begin{smallmatrix} #1 \\#2 
        \end{smallmatrix} \right)}
\newcommand{\stab}[3]{\begin{array}{c}\rnode{#1}{\tableau{#2}}\\\rnode{#1#1}{_{#3}}\end{array}}
\newlength\cellsize \setlength\cellsize{12\unitlength}
\newcommand\cellify[1]{\def\thearg{#1}\def\nothing{}%
\ifx\thearg\nothing
\vrule width0pt height\cellsize depth0pt\else
\hbox to 0pt{\usebox2\hss}\fi%
\vbox to 12\unitlength{
\vss
\hbox to 12\unitlength{\hss$#1$\hss}
\vss}}
\newcommand\tableau[1]{\vtop{\let\\=\cr
\setlength\baselineskip{-12000pt}
\setlength\lineskiplimit{12000pt}
\setlength\lineskip{0pt}
\halign{&\cellify{##}\cr#1\crcr}}}
\newlength\smcellsize \setlength\smcellsize{8\unitlength}
\newcommand\smcellify[1]{\def\thearg{#1}\def\nothing{}%
\ifx\thearg\nothing
\vrule width0pt height\smcellsize depth0pt\else
\hbox to 0pt{\usebox3\hss}\fi%
\vbox to 8\unitlength{
\vss
\hbox to 8\unitlength{\hss$#1$\hss}
\vss}}
\newcommand\smtableau[1]{\vtop{\let\\=\cr
\setlength\baselineskip{-10000pt}
\setlength\lineskiplimit{10000pt}
\setlength\lineskip{0pt}
\halign{&\smcellify{##}\cr#1\crcr}}}
\newcommand{\e}{\mbox{}}
\begin{document}


\title[Affine dual equivalence and $k$-Schur functions]{Affine dual
  equivalence and $k$-Schur functions}

\author[S. Assaf]{Sami Assaf} %
\address{Department of Mathematics, Massachusetts Institute of
  Technology, 77 Massachusetts Avenue, Cambridge, MA 02139-4307}
\email{sassaf@math.mit.edu} 

\thanks{Sami Assaf acknowledges support from an NSF postdoctoral
fellowship.  Sara Billey acknowledges support from grants DMS-0800978
and DMS-1101017 from the National Science Foundation.}

\author[S. Billey]{Sara Billey} %
\address{Department of Mathematics, Padelford C-445, University of
  Washington, Box 354350, Seattle, WA 98195-4350 }
\email{billey@math.washington.edu}




\keywords{Affine symmetric group, quasisymmetric functions, cores, dual equivalence graphs, $k$-Schur
  functions}

\begin{abstract}
  The $k$-Schur functions were first introduced by Lapointe, Lascoux
and Morse \cite{LLM2003} in the hopes of refining the expansion of
Macdonald polynomials into Schur functions.  Recently, an alternative
definition for $k$-Schur functions was given by Lam, Lapointe, Morse,
and Shimozono \cite{LLMS} as the weighted generating function of
starred strong tableaux which correspond with labeled saturated chains
in the Bruhat order on the affine symmetric group modulo the symmetric
group.  This definition has been shown to correspond to the Schubert
basis for the affine Grassmannian of type $A$ \cite{Lam2008} and at
$t=1$ it is equivalent to the $k$-tableaux characterization of
Lapointe and Morse \cite{LaMo2007}.  In this paper, we extend Haiman's
dual equivalence relation on standard Young tableaux \cite{Haiman1992}
to all starred strong tableaux.  The elementary equivalence relations
can be interpreted as labeled edges in a graph which share many of the
properties of Assaf's dual equivalence graphs.  These graphs display
much of the complexity of working with $k$-Schur functions and the
interval structure on $\widetilde{S}_{n}/S_{n}$.  We introduce the
notions of flattening and squashing skew starred strong tableaux in
analogy with jeu da taquin slides in order to give a method to find
all isomorphism types for affine dual equivalence graphs of rank 4.
Finally, we make connections between $k$-Schur functions and both LLT
and Macdonald polynomials by comparing the graphs for these functions.
\end{abstract}

\maketitle


\section{Introduction}
\label{sec:intro}

Classically, the Schur functions have played a central role in the
theory of symmetric functions \cite{Macdonald1995}.  They also appear
in geometry as representatives for Schubert classes in the cohomology
rings of Grassmannian manifolds, and they appear in representation
theory as the Frobenius characteristics of irreducible $S_{n}$
representations and as the trace for certain irreducible $GL_{n}$
representations.   

In \cite{LLM2003}, Lapointe, Lascoux and Morse introduced a new larger
family of symmetric functions which includes the Schur functions,
namely the $k$-Schur functions, with similar connections both to
geometry and to representation theory. The $k$-Schur functions were
defined in hopes of refining and ultimately proving the Macdonald
Positivity Conjecture \cite{Macdonald1988}. Precisely, Lapointe
Lascoux and Morse conjectured that the Macdonald polynomials expand
into $k$-Schur functions with polynomial coefficients in two
parameters $q,t$ with nonnegative integer coefficients, and that the
$k$-Schur functions expand into Schur functions with polynomial
coefficients with parameter $t$ and nonnegative integer coefficients.
Haiman \cite{Haiman2001} has since shown that the Macdonald
polynomials are the Frobenius characteristic of a bigraded
$S_{n}$-module defined by Garsia and Haiman \cite{GaHa1993} using the
geometry of the Hilbert scheme of points in the plane. This resolved
the $n!$ Conjecture and provided the first proof of Macdonald
positivity. 

At this time, a number of conjecturally equivalent definitions for
$k$-Schur functions exist
\cite{LLMS,LLM2003,LaMo2003-2,LaMo2003-3,LaMo2005,LaMo2007}, making
the term ``$k$-Schur function'' rather ambiguous. In this paper, we
advocate for the geometrically inspired definition as the weighted
generating function of starred strong tableaux presented by Lam,
Lapointe, Morse and Shimozono \cite{LLMS}. This definition at $t=1$ is
equivalent to the $k$-tableaux characterization in \cite{LaMo2007}
which has been shown to represent the Schubert basis in the homology
of the affine Grassmannian of type $A$ \cite{Lam2008}.
Furthermore, the starred strong tableaux are a natural generalization
of standard tableaux which appear throughout combinatorics.

Recently, Lam, Lapoint, Morse and Shimozono proved that the $k$-Schur
functions as defined below except with $t=1$ are Schur positive
\cite{LLMS2010}.  Their approach shows how $k$-Schur functions relate
to $k+1$-Schur functions when the $t$ is not included.

It is an open problem to show that the $k$-Schur functions including
the $t$ statistic are Schur positive.  Toward proving this conjecture,
we define a family of involutions on starred strong tableaux which
generalize Haiman's elementary dual equivalence moves
 on standard Young tableaux \cite{Haiman1992}.  Using these involutions, one can put a graph
structure on starred strong tableaux which satisfies many of the same
axioms as the dual equivalence graphs defined by the first author in
\cite{Assaf2007-3}.  As our model for dual equivalence is based on the
poset on $n$-cores induced from Young's lattice, our results extend to
$k$-Schur functions indexed by skew shapes. Our main result is that
these graphs, which we call \textit{affine dual equivalence graphs},
are locally Schur positive when restricted to edges of 2 adjacent
colors and the spin is constant on connected components, see
Definition~\ref{defn:lsp} and Theorem~\ref{thm:Dgraph}.
\footnote{Earlier, we announced the stronger result that $k$-Schur
functions as defined here are Schur positive.  However, we have since
realized that the proof is incomplete for two reasons.  First, the
proof outline requires one to identify all isomorphism types for
3-colored components in affine dual equivalence graphs of the form.
Our computer verification relies on a halting problem which has not
terminated.  Second, the axiom (4') required in \cite{Assaf2007-3} is
not known to hold for affine dual equivalence graphs.}

Jeu da taquin is an important algorithm in the theory of symmetric
functions related to Littlewood-Richardson coefficients.  One of the
properties of jeu da taquin slides is that they commutes with
elementary dual equivalence moves on tableaux \cite[Lemma
2.3]{Haiman1992}.  There is no known analog of jeu da taquin for
$k$-Schur functions at this time.  Such an analog would in principle
be useful for multiplying $k$-Schur functions and expanding again into $k$-Schurs.  One approach to finding such a jeu da taquin algorithm
is to look for sliding moves which commute with affine dual
equivalence moves.  In Sections~\ref{sec:flattening.map}
and~\ref{sec:graph-cloning}, we describe two types of collapsing moves
which commute with affine dual equivalence in specified cases.  These
collapsing moves are the analogs of removing empty rows and columns in
a skew tableau via jeu da taquin.

One of the main consequences of our results is a connection between
$k$-Schur functions and LLT polynomials which is realized by an
isomorphism of graphs for the two functions in certain cases. More
generally, we expect that a better understanding of the connections
between the graph we construct for $k$-Schur functions and that for
LLT polynomials will ultimately show that an LLT polynomial expands
into $k$-Schur functions with coefficients that are polynomials in $t$
with nonnegative integer coefficients for an appropriate value of
$k$. Given Haglund's formula expanding Macdonald polynomials
positively into certain LLT polynomials \cite{Haglund2004,HHL2005},
this would also establish the missing connection between Macdonald
polynomials and $k$-Schur functions.

The outline of the paper goes as follows.  In Section~\ref{sec:defs},
we review the basic vocabulary on partitions, the affine symmetric
group, symmetric functions and quasisymmetric functions.  In
particular, we review an interesting order preserving bijection
between a quotient of the affine symmetric group with the $n$-core
partitions relating Bruhat order to a subposet of Young's lattice. In
Section~\ref{sec:kschur}, one definition of $k$-Schur functions
expanded into fundamental quasisymmetric functions is given following
\cite[Conjecture 9.11]{LLMS}.  These functions can be indexed by
$n$-cores, minimal length coset representatives for $\aSn/\Sn$, or
$k=n-1$ bounded partitions since all three sets are in bijection.  In
Section~\ref{sec:degs}, we review dual equivalence on standard Young
tableaux along with the associated graph structures and axioms.  In Section~\ref{sec:poset}, we carefully study the covering
relations and the rank two intervals in the poset on $n$-core
partitions.  In Section~\ref{sec:equivalence}, we define the affine
analog of dual equivalence operations and prove these maps are
involutions.  The main theorem is proved at the end of
Section~\ref{sec:graph}.  Here we also give our definition of the
affine dual equivalence graph on starred strong tableaux of a given
shape.  In Section~\ref{sec:LLT}, we describe the connections between
$k$-Schur functions and both the LLT polynomials and Macdonald
polynomials.  We encourage the reader to look ahead to this section
after seeing the definition of $k$-Schur functions in
Section~\ref{sec:kschur} in order to see the similarities.  Finally,
in the Appendix, we have included some examples of $k$-Schur functions
expanded both in quasisymmetric functions and Schur functions along
with their affine dual equivalence graphs.

%
%

\begin{center}
{\sc Acknowledgments}
\end{center}

We would like to thank Nantel Bergeron, Andrew Crites, Adriano Garsia, Jim Haglund,
Mark Haiman, Steve Mitchell, Jennifer Morse, Austin Roberts and Mike Zabrocki for
inspiring conversations on this topic.

\section{Basic definitions and notations}
\label{sec:defs}

\subsection{Partitions}
\label{sec:defs-parts}

A {\em partition} $\lambda$ is a weakly decreasing sequence of
non-negative integers
$$
\lambda = (\lambda_1,\lambda_2, \ldots, \lambda_l), \;\;\;\;\;
\lambda_1 \geq \lambda_2 \geq \cdots \geq \lambda_l > 0 .
$$
The {\em Young diagram} of a partition $\lambda$ is the set of points
$(i,j)$ in $\mathbb{N} \times \mathbb{N}$ such that $1 \leq i \leq
\lambda_j$. We draw the diagram so that each point $(i,j)$ is
represented by the unit cell southwest of the point.  Abusing
notation, we will write $\lambda$ for both the partition and its
diagram. For example, the diagram of $(4,3,1)$ is 
\begin{displaymath}
  \tableau{\e \\ \e & \e & \e \\ \e & \e & \e & \e}.
\end{displaymath}
We may also represent $\lambda$ by an infinite binary string as
follows. Consider the diagram of $\lambda$ lying in the $\mathbb{N}
\times \mathbb{N}$ plane with infinite positive axes. Walk in unit
steps along the boundary of $\lambda$, writing $1$ for each vertical
step and $0$ for each horizontal step.  For example, $(4,3,1)$ becomes
\begin{displaymath}
  \cdots \ 1 \ 1 \ 1 \ 0 \ 1 \ 0 \ 0 \ 1 \ 0 \ 1 \ 0 \ 0 \ 0 \cdots .
\end{displaymath}
Note that this establishes a bijective correspondence between
partitions and doubly infinite binary strings $s$ such that $s_i = 1$
for all $i<l$ and $s_i=0$ for all $i>r$ for some $l,r \in \mathbb{Z}$.

For partitions $\lambda,\mu$, we write $\mu \subset \lambda$ whenever
the diagram of $\mu$ is contained within the diagram of $\lambda$;
equivalently $\mu_i \leq \lambda_i$ for all $i$. \emph{Young's
  lattice} is defined by the partial ordering on partitions given by
containment.

A \emph{standard Young tableau of shape $\lambda$} is a saturated
chain in Young's lattice from the empty partition to $\lambda$. As
moving from rank $i-1$ to rank $i$ adds a single box, filling this
added box with the letter $i$ uniquely records the chosen
chain. Therefore standard Young tableaux are also characterized as
bijective fillings of the cells of $\lambda$ with the letters $1$ to
$m$ so that entries increase along rows and up columns. 
Let $\SYT(\lambda)$ denote the
set of all standard Young tableaux of shape $\lambda$, and let $\SYT$
denote the union of all $\SYT(\lambda)$.  
For example, a
standard tableau of shape $(4,3,1)$ is
\begin{equation}\label{ex:std.tab}
  \tableau{6 \\ 2 & 5 & 8 \\ 1 & 3 & 4 & 7}. 
\end{equation}

When $\mu \subset \lambda$, we may define the {\em skew diagram}
$\lambda / \mu$ to be the set theoretic difference $\lambda - \mu$. A
standard tableau of skew shape $\lambda/\mu$ is a saturated chain in
Young's lattice from $\mu$ to $\lambda$, or, equivalently, a bijective
filling of the cells of $\lambda / \mu$ with entries $1$ to $m$ so
that entries increase along rows and up columns.

An \emph{addable} cell for a partition $\lambda$ is any cell $c$
such that $c \cup \lambda $ is again a Young diagram of a partition.
Similarly, a \emph{removable} cell for a partition $\lambda$ is any
cell $c$ such that $\lambda - c $ is again a Young diagram of a
partition.

A {\em connected skew diagram} is one where exactly one cell has no
cell immediately north or west of it, and exactly one cell has no cell
immediately south or east of it.  Two distinct connected components
can meet at one point but not along an edge of a cell.  A connected
skew diagram is necessarily nonempty. A \emph{ribbon} is a connected
skew diagram containing no $2 \times 2$ subdiagram. We may define
\emph{addable} and \emph{removable ribbons} of $\lambda$ just as
with cells; namely, a ribbon $R$ is an addable (resp. removable)
ribbon for a partition $\lambda$ if $\lambda \cup R$ (resp. $\lambda -
R$) is again a partition.

To each cell $x$ of a diagram $\lambda$ associate the \emph{content of
$x$} defined by $c(x) = i-j$ where the cell $x$ lies in row $j$ and
column $i$. We also consider the \emph{residue of $x$}, defined as the
content of $x$ modulo $n$. The content and residue of ribbons are
defined with respect to the southeasternmost cell.  The \textit{head}
of a ribbon is its southeasternmost cell, and the \textit{tail} of a
ribbon is its \textit{northwesternmost} cell.   

The \emph{hook length} of $x$ is the number of squares above and to
the right of $x$ in $\lambda$ including $x$ itself.  Define the
\emph{bandwidth} of a partition to be the number of distinct
contents occupied by its cells. Equivalently, the bandwidth of a
non-skew partition is its maximum hook length.

An \emph{$n$-core} is a partition having no removable ribbon of
length $n$. Equivalently, no hook length of $\lambda$ is divisible by
$n$.  Young's lattice restricted to $n$-cores gives another ranked
partial order, but it is not a lattice.  This partial order on
$n$-cores is central to the definition of $k$-Schur functions and
strong tableaux given in Section~\ref{sec:kschur}.

\subsection{Affine permutations}
\label{sec:defs-perms}

Here we briefly recall the necessary vocabulary on affine
permutations.  For a more thorough treatment of the combinatorial
aspects of Coxeter groups we recommend \cite{BjBr1996}, specifically
see Section 8.3 for details on the affine symmetric group.  Recent
developments on core partitions and connections to affine Weyl groups
can be found in \cite{BJV,Hanusa-Jones}.

Given $n$, consider the set $\aSn$ of all bijections $w:\mathbb{Z}
\longrightarrow \mathbb{Z}$ such that 
\begin{displaymath}
  w(i+n)=w(i)+n \ \forall i \in \mathbb{Z} \hspace{1em} \mbox{and} \hspace{1em} 
  w(1)+w(2)+\dotsb +w(n)=\chs{n+1}{2}.
\end{displaymath}
For example, given $i,j \in \mathbb{Z}$ such that $i \not \equiv j$
(all congruences should be taken modulo $n$ throughout the paper), the
\emph{affine transposition} $t_{i,j} \in \aSn$ is the periodic
bijection such that $t_{i,j}(i+p\cdot n)=j+p \cdot n$,
$t_{i,j}(j+p\cdot n)=i+p\cdot n$, and $t_{i,j}(k)=k$ for all $k\not
\equiv i$ and $k\not \equiv j$ and all $p \in \mathbb{Z}$.  $\aSn$ is known as the \emph{affine
  symmetric group}.  It is the affine Weyl group of type $A_{n-1}$.
As a Coxeter group, $\aSn$ is generated by the adjacent transpositions
$s_{i}=t_{i,i+1}$ for $0\leq i<n$.  If $w=s_{i_{1}}s_{i_{2}}\cdots
s_{i_{p}} \in \aSn$ and $p$ is minimal among all such expressions for
$w$, then $s_{i_{1}}s_{i_{2}}\cdots s_{i_{p}}$ is a \emph{reduced
  expression} for $w$ and the \emph{length} of $w$ is $p$, denoted
$\ell(w)=p$.  The length function is the rank function for the
\emph{Bruhat order} on $\aSn$.  As a partial order, Bruhat order can
be described as the transitive closure of the relation $w < t_{i,j} w$
if $\ell(w) < \ell(t_{i,j}w)$.  The symmetric group $\Sn$ can be
viewed as the parabolic subgroup of $\aSn$ generated by $s_{1},\dotsc,
s_{n-1}$.

Let $Q_{n}$ be the minimal length coset representatives for the
quotient $\aSn/\Sn$.  Bruhat order restricted to $Q_{n}$ is again a
partial order ranked by the length function. There is a rank
preserving bijection from $n$-core partitions to $Q_{n}$ which
respects the Bruhat order.  This correspondence leads to  useful
criteria for Bruhat order on $Q_{n}$ in Theorem~\ref{t:lascoux} and
the covering relation in Proposition~\ref{p:rod.cover} and
Corollary~\ref{cor:iribbons}.  We follow \cite{ec1} for terminology on
partial orders.

\begin{definition}\label{defn:aff.to.n-cores}
\cite{JaKe1981,LaMo2005,Misra-Miwa} 
Define the function 
\begin{equation}\label{e:aff.to.n-cores}
\core: Q_{n} \longrightarrow n\text{-core partitions}
\end{equation}
recursively as follows.  Associate the empty partition with the
identity in $Q_{n}$; namely, $\core(\mathrm{id})=\emptyset$.  Say
$\core(w)=\lambda$ and $\ell(s_{i}w)> \ell(w)$, then $\core(s_{i}w)$
is obtained from $\lambda $ by adding every addable cell with residue
$i$ to $\lambda$. 
\end{definition}
In \cite{JaKe1981,LaMo2005}, $\core$ is shown to be a bijection.  
Denote $\core^{-1}$ by \begin{equation}\label{e:core.to.aff}
  \aff: n\text{-core partitions} \longrightarrow Q_{n}.
\end{equation}

\begin{remark}\label{r:algorithm}
Definition~\ref{defn:aff.to.n-cores} can be used as an algorithm for
generating $n$-core partitions.  The reader is encouraged to look
ahead to Figure~\ref{fig:poset} to see how the $3$-core partitions up
to length 4 are generated.
\end{remark}

Note, if $\lambda$ is an $n$-core with an addable cell of residue $i$,
then $\lambda$ has no removable cells of residue $i$.
Similarly, if $\lambda$ is an $n$-core with a removable cell of
residue $i$, then $\lambda$ has no addable cells of
residue $i$ \cite[\S 5]{LaMo2005}.

The following beautiful theorem of Lascoux shows the power of the
$n$-core model for $Q_{n}$.

\begin{theorem}\label{t:lascoux}\cite{Lascoux1999}
Given $v,w \in Q_{n}$, let $\mu = \core(v)$ and $\lambda = \core(w)$
be the corresponding $n$-core partitions.  Then $\mu \subset \lambda $
in Young's lattice if and only if $v<w$ in Bruhat order restricted to
$Q_{n}$.  
\end{theorem}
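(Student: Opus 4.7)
My strategy is a two-sided induction on the rank difference, which equals $\ell(w) - \ell(v)$ since the bijection $\core$ is rank-preserving. Each direction reduces to verifying that $\core$ transports a single cover in one poset to a cover (or a chain of covers) in the other, and the heart of the proof is a structural dictionary between the two kinds of covers.

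\emph{Forward direction $(v < w \Rightarrow \mu \subset \lambda)$.} It suffices to treat a single Bruhat cover $v \lessdot u$ in $Q_n$, since any $v < w$ in the ranked poset $Q_n$ admits a saturated chain and containment of diagrams is transitive. Such a cover is realized by an affine transposition, $u = t_{i,j} v$ with $\ell(u) = \ell(v) + 1$. For this step I would invoke the structural description of covers developed later in Section~\ref{sec:poset} (Proposition~\ref{p:rod.cover} and Corollary~\ref{cor:iribbons}), which identifies $\core(u) \setminus \core(v)$ with an explicit ribbon and thereby gives the strict inclusion $\core(v) \subsetneq \core(u)$. Chaining these inclusions up a saturated chain from $v$ to $w$ yields $\mu \subset \lambda$.

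\emph{Converse direction $(\mu \subset \lambda \Rightarrow v < w)$.} Choose an $n$-core $\nu$ with $\mu \subsetneq \nu \subseteq \lambda$ minimal in the rank of Young's lattice restricted to $n$-cores, so that $\mu$ is covered by $\nu$ there. By the same structural results of Section~\ref{sec:poset}, such a cover is obtained by simultaneously adding all addable cells of some single residue $i$. Reading Definition~\ref{defn:aff.to.n-cores} in reverse then gives $\aff(\nu) = s_i\,\aff(\mu)$ with $\ell(\aff(\nu)) = \ell(\aff(\mu)) + 1$, a Bruhat cover in $\aSn$; it remains inside $Q_n$ because $\aff(\nu)$ is by construction a minimal coset representative. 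Induction applied to $\nu \subseteq \lambda$ finishes the step.

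\emph{Main obstacle.} The essential content is the structural dictionary between Bruhat covers in $Q_n$ and covers in the induced $n$-core poset. On the Bruhat side, covers in $Q_n$ are realized by general affine transpositions $t_{i,j}$, not merely by the simple reflections $s_i$ appearing in Definition~\ref{defn:aff.to.n-cores}; one must therefore show that every such transposition cover translates under $\core$ to the addition of all addable cells of one residue (equivalently, a single $n$-ribbon), and conversely. This identification is precisely Proposition~\ref{p:rod.cover} together with Corollary~\ref{cor:iribbons}, whose proofs proceed by a careful analysis of the doubly infinite binary-string encoding of $n$-cores and the explicit action of affine transpositions on it. Modulo that structural input, Theorem~\ref{t:lascoux} is the clean packaging of those results into a Bruhat-order statement.
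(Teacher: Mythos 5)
The paper does not prove Theorem~\ref{t:lascoux}: it is cited directly from \cite{Lascoux1999} (with the surrounding machinery of Proposition~\ref{p:rod.cover} and Corollary~\ref{cor:iribbons} developed separately in Section~\ref{sec:poset}). So there is no in-paper proof to compare against, and your sketch has to be judged on its own terms. Your overall two-sided induction on rank is a reasonable and standard strategy, and the forward direction is essentially sound modulo care with the order of dependencies. The converse direction, however, contains a genuine error.

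In the converse direction you assert that a cover $\mu \lessdot \nu$ in the $n$-core poset ``is obtained by simultaneously adding all addable cells of some single residue $i$,'' and then read Definition~\ref{defn:aff.to.n-cores} backwards to get $\aff(\nu) = s_i\,\aff(\mu)$. This is false in general. Proposition~\ref{p:rod.cover}(3) and Corollary~\ref{cor:iribbons} say that a cover corresponds to $\nu = t_{r,s}\mu$ for an affine transposition with $0 < s-r < n$, and the skew shape $\nu/\mu$ is a disjoint union of identical ribbons of length $s-r$, which may well exceed $1$. Only when $s-r = 1$ is the transposition simple, in which case each ribbon is a single cell and your description applies. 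For instance, with $n=3$ the cover $(2,2,1,1) \lessdot (4,2,1,1)$ adds a single $2$-ribbon occupying contents $2,3$ (residues $2,0$), so it is not the addition of all addable cells of one residue. Consequently Definition~\ref{defn:aff.to.n-cores}, which only describes the effect of simple reflections, cannot be ``read in reverse'' here, and your claimed Bruhat cover $\aff(\nu) = s_i\,\aff(\mu)$ does not follow.

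The fix is straightforward but must be spelled out: from $\nu = t_{r,s}\mu$ covering $\mu$, appeal to the rank-preserving property of $\core$ to conclude $\ell(\aff(\nu)) = \ell(\aff(\mu)) + 1$, and then use the definition of Bruhat order as the transitive closure of $w < t_{i,j}w$ whenever $\ell(w) < \ell(t_{i,j}w)$ to obtain $\aff(\mu) < \aff(\nu)$; the element $\aff(\nu)$ lies in $Q_n$ by construction of the bijection. Induction on rank then closes the step. You should also make explicit, in both directions, where the rank-preservation of $\core$ (i.e., that the rank of $\lambda$ in the $n$-core poset equals $\ell(\aff(\lambda))$) is used to translate ``length increases by one'' into ``containment goes up by one rank,'' since Proposition~\ref{p:rod.cover} as stated speaks about the containment poset rather than Bruhat length; without this bridge there is a danger of circularity, because Proposition~\ref{p:rod.cover}(3) itself characterizes covers in the \emph{containment} poset, not Bruhat covers.
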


\subsection{Symmetric and Quasisymmetric functions}
\label{sec:defs-syms}

We adopt notations for the standard bases for $\Lambda$, the ring of
symmetric functions, from \cite{Macdonald1995}. For this paper, we are
primarily interested in the Schur functions $s_{\lambda}$, indexed by
partitions. The Schur functions form an orthonormal basis for
$\Lambda$ with the Hall scalar product. The Schur functions also give
the irreducible characters for representations of the general linear
group as well as the Schubert basis for the cohomology of the
Grassmannian \cite{Fulton-book}. The $k$-Schur functions have
analogous interpretations for each of these viewpoints.

We will use the expansion for Schur functions in terms of Gessel's
fundamental quasisymmetric functions \cite{Gessel1984} rather than in
terms of monomials on an alphabet $X=\{x_{1},x_{2},\ldots\}$.  The
$k$-Schur functions will have a similar expansion, presented in
Section~\ref{sec:kschur-quasi}.

\begin{definition}
  For $\sigma \in \{\pm 1\}^{m-1}$, the \emph{fundamental
    quasisymmetric function associated to $\sigma$}, denoted
  $Q_{\sigma}$, is given by
  \begin{equation}
    Q_{\sigma}(X) = \sum_{\substack{i_1 \leq \cdots \leq i_m \\ i_j =
        i_{j+1} \Rightarrow \sigma_j = +1}} x_{i_1} \cdots x_{i_m} .
    \label{eqn:quasisym}
  \end{equation}
  \label{defn:quasisym}
\end{definition}

To connect quasisymmetric functions with Schur functions, for $T$ a
standard tableau on $1,\ldots,m$, define the {\em descent signature}
$\sigma(T) \in \{\pm 1\}^{m-1}$ by
\begin{equation}
  \sigma_{i}(T) \; = \; \left\{ 
    \begin{array}{ll}
      +1 & \; \mbox{if the content of $i$ is less than the content of $i+1$} \\
      -1 & \; \mbox{if the content of $i+1$ is less than the content of $i$.}
    \end{array} \right\}
\label{eqn:sigma}
\end{equation}
Note that in a standard tableau, consecutive entries may never appear
along the same diagonal so the content of the cells containing $i$ and $i+1$ are never equal. In particular, $\sigma$ is well-defined on
$\SYT$. 

\begin{theorem}\cite{Gessel1984}
  The Schur function $s_{\lambda}$ can be expressed in terms of
  quasisymmetric functions by
  \begin{equation}\label{e:quasi-s}
    s_{\lambda}(X) = \sum_{T \in \SYT(\lambda)} Q_{\sigma(T)}(X) .
  \end{equation}
\label{thm:quasisym}
\end{theorem}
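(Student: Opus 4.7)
The plan is to begin from the combinatorial Littlewood formula $s_\lambda(X) = \sum_{T} x^T$, summed over semistandard Young tableaux $T$ of shape $\lambda$ with entries in $\{1,2,\ldots\}$ and $x^T = \prod_{c \in \lambda} x_{T(c)}$. I will group the SSYTs according to their \emph{standardization} in $\SYT(\lambda)$ and identify the contribution of each fiber with a fundamental quasisymmetric function, then sum over $\SYT(\lambda)$ to recover \eqref{e:quasi-s}.

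Standardization is defined as follows: given an SSYT $T$ of shape $\lambda$ with $m$ cells, I visit the values $v = 1, 2, \ldots$ in turn and, within each value, relabel the cells containing $v$ from left to right using the next available integers from $1$ through $m$. A brief check shows that this produces an element of $\SYT(\lambda)$: rows of the result are strictly increasing because, within a constant-$v$ block of a row, the procedure assigns smaller labels to smaller columns, while columns were strict in $T$ and remain so under any order-preserving relabeling.

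The crux of the argument is the following characterization: for each $S \in \SYT(\lambda)$, an SSYT $T$ satisfies $\mathrm{std}(T) = S$ if and only if $T$ is obtained from $S$ by replacing the entry $k$ of $S$ with a positive integer $i_k$, where $i_1 \leq i_2 \leq \cdots \leq i_m$ and the inequality is strict whenever $\sigma_k(S) = -1$. The geometric lemma underlying this is that if two cells $(c_1, r_1)$ and $(c_2, r_2)$ of an SSYT both contain $v$ and $r_1 < r_2$, then $c_1 > c_2$; otherwise the cell $(c_1, r_2)$ lies in $\lambda$, and comparing its entry to $v$ along row $r_2$ (weakly $\leq v$) and column $c_1$ (strictly $> v$) contradicts the SSYT axioms. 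Since the cells containing $v$ thus form a sequence in which increasing column corresponds to decreasing row, the content $c(x) = i - j$ strictly increases from left to right through them. Standardization assigns consecutive labels in this order, so consecutive entries $k, k+1$ in $S$ share a value in $T$ exactly when the cell of $k$ has strictly smaller content than the cell of $k+1$, that is, when $\sigma_k(S) = +1$.

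Once this characterization is in place, the result follows by direct summation: the SSYTs with $\mathrm{std}(T) = S$ are parametrized by weakly increasing sequences $i_1 \leq \cdots \leq i_m$ with $i_k = i_{k+1}$ allowed only when $\sigma_k(S) = +1$, whence
\begin{equation*}
\sum_{T \,:\, \mathrm{std}(T) = S} x^T \ = \ Q_{\sigma(S)}(X)
\end{equation*}
by Definition~\ref{defn:quasisym}, and summing over $S \in \SYT(\lambda)$ yields \eqref{e:quasi-s}. I expect the main obstacle to be the careful bookkeeping for the characterization: one must match the tie-breaking convention in standardization (left-to-right, i.e.\ increasing column) to the content-based definition of $\sigma_k$, so that ascents and descents of $S$ correspond correctly to weak and strict inequalities among the $i_k$.
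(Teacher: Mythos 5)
The paper records this result as a theorem of Gessel, citing \cite{Gessel1984} without giving a proof, so there is no in-paper argument against which to compare. Your route — expressing $s_\lambda$ as the sum over semistandard tableaux, grouping by standardization, and identifying each fiber with $Q_{\sigma(S)}$ — is the standard and correct approach for this statement, and your geometric lemma (same-value cells in an SSYT march strictly up-and-to-the-left, so their contents strictly increase with the column) is argued correctly.

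There is, however, a gap in what you call the crux. You establish only the forward half of the characterization: if $\mathrm{std}(T)=S$ and $i_k$ is the value in $T$ at the cell labeled $k$ in $S$, then the sequence $(i_k)$ is weakly increasing with $i_k<i_{k+1}$ whenever $\sigma_k(S)=-1$. You do not verify the converse, namely that every such sequence, when filled back into the diagram of $S$, produces a valid SSYT whose standardization is $S$. Two checks are needed and neither is automatic. First, strict increase up columns: if $\ell$ lies directly above $k$ in $S$ then $k<\ell$ but $c(\ell)=c(k)-1<c(k)$, so the contents of $k,k+1,\dots,\ell$ cannot be strictly increasing at every step; hence some $j$ with $k\le j<\ell$ has $\sigma_j(S)=-1$, forcing $i_k\le i_j<i_{j+1}\le i_\ell$. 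Second, that $\mathrm{std}$ of the reconstructed tableau is $S$: the constant-value blocks of $(i_k)$ are intervals $[a,b]$ on which $\sigma(S)$ is identically $+1$, so the contents — and therefore the columns — increase with the label, which is exactly the left-to-right tie-breaking rule; one also needs to note that exactly $a-1$ cells carry smaller values, so the ``next available'' labels are indeed $a,\dots,b$. You flag this matching of conventions as the likely obstacle, but you should actually carry it out; without it the claimed bijection between the fiber over $S$ and the index set of $Q_{\sigma(S)}$ is unproved, and the final summation does not follow.
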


By Theorem~\ref{thm:quasisym}, working with quasisymmetric functions
instead of monomials affords us the benefit of working with standard
objects instead of semistandard objects.  Furthermore, the expansion
in \eqref{e:quasi-s} is independent of the size of the alphabet $X$
which could be finite or infinite.

\section{$k$-Schur functions}
\label{sec:kschur}

In this section, we recall two analogs of standard Young tableaux for
the $n$-core poset called strong tableaux and starred strong tableaux
from \cite{LLMS}.  The spin statistic is defined on starred strong
tableaux.  These ingredients are combined to give the definition of
$k$-Schur functions in terms of their expansion into fundamental
quasisymmetric functions.

\subsection{Strong tableaux}
\label{sec:kschur-sst}

Consider the poset on $n$-core partitions induced from Young's
lattice. A \emph{strong tableau of shape $\lambda$} is a saturated
chain
\begin{displaymath}
  \emptyset \subset \lambda^{(1)} \subset \lambda^{(2)} \subset \cdots
  \subset \lambda^{(m)} = \lambda  
\end{displaymath}
in the $n$-core poset from the empty tableau to $\lambda$. We denote
this chain by the filling $S$ of $\lambda$ where all cells of
$\lambda^{(i)} / \lambda^{(i-1)}$ contain the letter $i$.

\begin{figure}[ht]
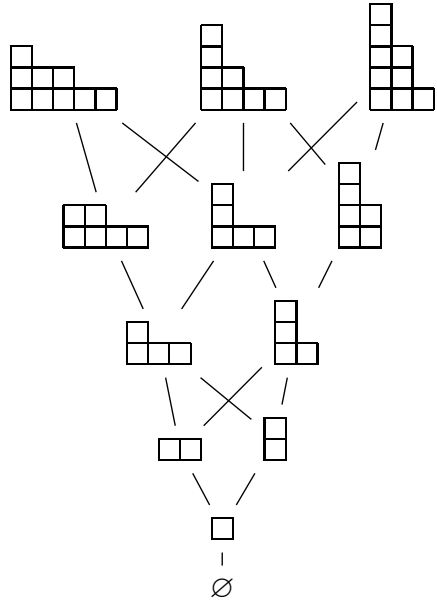

  \begin{center}
    \begin{displaymath}
      \begin{array}{c}
        \rnode{531}{\smtableau{\e \\ \e & \e & \e \\ \e & \e & \e &\e & \e}} 
        \hspace{4\smcellsize}
        \rnode{4211}{\raisebox{\smcellsize}{\smtableau{\e \\ \e \\ \e & \e \\
              \e & \e & \e & \e}}} 
        \hspace{4\smcellsize}
        \rnode{32211}{\raisebox{2\smcellsize}{\smtableau{\e \\ \e \\ \e &
              \e \\ \e & \e \\ \e & \e & \e}}} 
        \\[4\smcellsize]
        \rnode{42}{\smtableau{\e & \e \\ \e & \e &\e & \e}} 
        \hspace{3\smcellsize}
        \rnode{311}{\raisebox{\smcellsize}{\smtableau{\e \\ \e \\ \e & \e
              & \e}}} 
        \hspace{3\smcellsize}
        \rnode{2211}{\raisebox{2\smcellsize}{\smtableau{\e \\ \e \\ \e &
              \e \\ \e & \e}}} 
        \\[3\smcellsize]
        \rnode{31}{\smtableau{\e \\ \e & \e & \e}} 
        \hspace{4\smcellsize}
        \rnode{211}{\raisebox{\smcellsize}{\smtableau{\e \\ \e \\ \e &
              \e}}} 
        \\[3\smcellsize] 
        \rnode{2}{\smtableau{\e & \e}}
        \hspace{3\smcellsize}
        \rnode{11}{\raisebox{\smcellsize}{\smtableau{\e \\ \e}}} 
        \\[2\smcellsize]
        \rnode{1}{\smtableau{\e}} 
        \\[\smcellsize]
        \rnode{0}{\varnothing}
      \end{array}
      \psset{nodesep=5pt,linewidth=.1ex}
      \ncline{531}{42}
      \ncline{531}{311}
      \ncline{4211}{42}
      \ncline{4211}{311}
      \ncline{4211}{2211}
      \ncline{32211}{311}
      \ncline{32211}{2211}
      \ncline{42}{31}
      \ncline{311}{31}
      \ncline{311}{211}
      \ncline{2211}{211}
      \ncline{31}{2}
      \ncline{31}{11}
      \ncline{211}{2}
      \ncline{211}{11}
      \ncline{2}{1}
      \ncline{11}{1}
      \ncline{1}{0}
    \end{displaymath}
    \caption{\label{fig:poset} Poset of $3$-cores up to rank $5$.}
  \end{center}
\end{figure}

For example, from Figure~\ref{fig:poset}, the strong tableaux for
$n=3$ of size $m=4$ are
\begin{displaymath}
  \tableau{3 & 4 \\ 1 & 2 & 3 & 4}  \hspace{\cellsize}
  \tableau{2 & 4 \\ 1 & 3 & 3 & 4}  \hspace{\cellsize}
  \raisebox{\cellsize}{\tableau{4 \\ 3 \\ 1 & 2 & 3}}     \hspace{\cellsize}
  \raisebox{\cellsize}{\tableau{4 \\ 2 \\ 1 & 3 & 3}}     \hspace{\cellsize}
  \raisebox{\cellsize}{\tableau{3 \\ 3 \\ 1 & 2 & 4}}     \hspace{\cellsize}
  \raisebox{\cellsize}{\tableau{3 \\ 2 \\ 1 & 3 & 4}}     \hspace{\cellsize}
  \raisebox{2\cellsize}{\tableau{4 \\ 3 \\ 3 & 4\\ 1 & 2}} \hspace{\cellsize}
  \raisebox{2\cellsize}{\tableau{4 \\ 3 \\ 2 & 4\\ 1 & 3}}
\end{displaymath}

\subsection{Starred strong tableaux}
\label{sub:starred.strong.tableaux}

A \emph{starred strong tableau}, $S^*$, is a strong tableau $S$
where one connected component of the cells containing $i$ is chosen
for each $i$, and the southeasternmost cell of the chosen components
are adorned with a $*$.  Therefore, the information contained in
$S^{*}$ is equivalent to the pair $(S,c^{*})$ where
$c^{*}=(c_{1},c_{2},\dotsc, c_{m})$ is the \emph{content vector},
namely $c_{i}$ is the content of the cell containing $i^{*}$.  

Let $\SST(\lambda,n)$ be the set of all starred strong tableaux of
shape $\lambda$ regarded as an $n$-core.  For example, the 6 starred
strong tableaux of shape $\lambda = (2,2,1,1)$ are
\begin{equation}
  \tableau{4^* \\ 3^* \\ 2^* & 4 \\ 1^* & 3} \hspace{2\cellsize}  
  \tableau{4^* \\ 3 \\ 2^* & 4 \\ 1^* & 3^*} \hspace{2\cellsize}
  \tableau{4 \\ 3^* \\ 2^* & 4^* \\ 1^* & 3} \hspace{2\cellsize}
  \tableau{4 \\ 3 \\ 2^* & 4^* \\ 1^* & 3^*} \hspace{2\cellsize} 
  \tableau{4^* \\ 3 \\ 3^* & 4 \\ 1^* & 2^*} \hspace{2\cellsize} 
  \tableau{4 \\ 3 \\ 3^* & 4^* \\ 1^* & 2^*}
\label{eqn:2211}
\end{equation}

The following statistics on a starred strong tableau $S^*$ were first
introduced in \cite{LLMS}. Let $n(i)$ denote the number of connected
components of the cells containing $i$ of the underlying tableau
$S$. Among such connected components, let $h(i)$ be the height,
i.e. number of rows, of the starred connected component. Finally, let
$d(i^*)$ denote the \emph{depth of $i^*$ in $S^*$}, defined to be the
number of components northwest of the component containing
$i^*$. Define the statistic $\spin$ on starred strong tableaux as
follows,
\begin{equation}
  \spin(S^*) = \sum_{i} n(i) \cdot (h(i) - 1) + d(i^*).
\label{eqn:spin}
\end{equation}
For example, the spins of the starred strong tableaux in equation
\eqref{eqn:2211}, from left to right, are $0,1,1,2,1,2$.

 
This spin statistic was dubbed ``spin'' based on similarities with the
spin statistic on ribbon tableaux that gives LLT polynomials
\cite{LLT1997}. We explore deeper connections between LLT polynomials
and $k$-Schur functions in Section~\ref{sec:LLT}.

\subsection{Quasisymmetric expansion}
\label{sec:kschur-quasi}

The $k$-Schur function $s_{\lambda}^{(k)}(X;t)$ is the weighted
generating function of starred strong tableaux of shape
$\rho(\lambda)$, where $\rho$ is the bijection between $k$-bounded
partitions and $k+1$-cores introduced in \cite{LaMo2007}.  In was also
shown that the rank of $\rho(\lambda)$ in the $n$-core poset equals
$|\lambda|$ and it is conjectured that the leading term of
$s_{\lambda}^{(k)}(X;t)$ in the Schur function expansion is
$s_{\lambda}(X)$.

To define $\rho$ on a $k$-bounded partition $\lambda$, from north to
south slide each row of $\lambda$ east as far as necessary so that no
cell has hook length greater than $k$. Filling in the resulting skew
diagram gives $\rho(\lambda)$. To go back, remove all cells of
$\rho(\lambda)$ with hook length greater than $k$ and re-align the
rows with the western boundary. For example, we compute
$\rho(3,3,2,1,1) = (5,4,2,1,1)$ when $k=4$ as follows.
\begin{displaymath}
  \smtableau{\e \\ \e \\ \e & \e \\ \e & \e & \e \\ \e & \e & \e}
  \hspace{2\cellsize} \raisebox{-2\smcellsize}{$\longleftrightarrow$} 
  \hspace{2\cellsize}
  \smtableau{\e \\ \e \\ \e & \e \\ & \e & \e & \e \\ & & \e & \e & \e}
  \hspace{2\cellsize} \raisebox{-2\smcellsize}{$\longleftrightarrow$} 
  \hspace{2\cellsize}
  \smtableau{\e \\ \e \\ \e & \e \\ \e & \e & \e & \e \\ \e & \e & \e & \e & \e}
\end{displaymath}
Throughout this paper, we fix $n = k+1$ so that
we relate $n$-cores with $k$-Schur functions.

Rather than defining a semi-standard analog of strong tableaux to
define the expansion in terms of monomials as was given in
\cite{LLMS}, we formulate the definition in terms of (standard)
starred strong tableaux using quasisymmetric functions.  The two
versions of the definition are easily seen to be equivalent. We begin
by defining the \emph{descent signature}, $\sigma \in \{\pm
1\}^{m-1}$, of a starred strong tableau $S^*$ of rank $m$ as follows.

\begin{equation}
  \sigma_{i}(S^*) = \left\{ \begin{array}{rl}
      +1 & \mbox{if the content of $i^*$ is less than the content of
        $(i+1)^*$} \\
      -1 & \mbox{if the content of $i^*$ is greater than the content
        of $(i+1)^*$} 
    \end{array} \right.
    \label{eqn:sigmaS}
\end{equation}

\begin{remark}
  Since the union of cells containing $i$ and those containing $i+1$
  must be a valid skew shape, the southeasternmost cells containing
  $i$ and $i+1$ may not lie on the same diagonal. Therefore $\sigma$
  is well-defined for all starred strong tableaux.
\label{rmk:content}
\end{remark}

\begin{definition}
  Let $\nu$ be a $k$-bounded partition. The \emph{$k$-Schur
    function} indexed by $\nu$ is given by
  \begin{equation}
    s_{\nu}^{(k)}(X;t) = \sum_{S^* \in \SST(\rho(\nu),n)} t^{\spin(S^*)}
    Q_{\sigma(S^*)}(X) ,
    \label{eqn:kschur}
  \end{equation}
  where the sum is over all standard starred strong tableaux of shape
  $\rho(\nu)$ in the $n=k+1$-core poset.
  \label{defn:kschur}
\end{definition}

\begin{remark}
  We may extend Definition~\ref{defn:kschur} to \emph{skew} strong
  tableaux in the obvious way by considering all saturated chains from
  an $n$-core $\mu$ to an $n$-core $\nu$. The definitions for starred
  strong tableaux and spin extend trivially to this
  setting. Consequently, all of our results for $k$-Schur functions
  also extend to this skew setting.
\end{remark}

\section{Dual equivalence}
\label{sec:degs}

The main idea behind a dual equivalence graph, introduced in
\cite{Assaf2007-2}, is to provide a structure whereby the
quasisymmetric functions contributing to a single Schur function are
grouped together into equivalence classes, thereby demonstrating the
Schur positivity of the given quasisymmetric expansion. For standard
Young tableaux, the desired classes are precisely the \emph{dual
equivalence classes} defined by Haiman \cite{Haiman1992}. An abstract
\emph{dual equivalence graph} is defined by modeling the internal
structure of these classes using Haiman's \emph{elementary dual
equivalence relations}. The connected components of a dual equivalence
graph are exactly the desired equivalence classes, namely the sum over
the quasisymmetric functions in a given connected component is equal
to a single Schur function. Dual equivalence graphs, and more
generally \D graphs, provide a structure whereby we may extend the
notion of dual equivalence to more general objects, in our case,
starred strong tableaux.

\subsection{Dual equivalence on standard Young tableaux}
\label{sec:degs-tableaux}

We begin by constructing a graph on standard tableaux using dual
equivalence. Originally, Haiman defined an {\em elementary dual
 equivalence} on three consecutive letters $i-1,i,i+1$ of a
permutation by switching the outer two letters whenever the middle
letter is not $i$:
\begin{equation}\label{e:dual.equivalence}
 \cdots\; i \;\cdots\;i\pm 1\;\cdots\;i\mp 1\;\cdots\; \cong
 \; \cdots\;i\mp 1\;\cdots\;i\pm 1\;\cdots\; i \;\cdots .
\end{equation}
In Equation~\eqref{e:dual.equivalence}, $i\pm 1$ acts as a
\emph{witness} to the $i,i\mp 1$ exchange ensuring they are not
adjacent letters in the permutation.

The definition of dual equivalence extends naturally to standard Young
tableaux by applying the action to the permutation obtained by reading
the entries along content lines.  For example, the content reading
word of the standard tableau in \eqref{ex:std.tab} is 62153847.  Note
that in a standard tableau, $i$ and $j$ may lie on the same content
line only if $|i-j| \geq 3$. In particular, each of $i-1, i$ and $i+1$
must lie on distinct content lines, making
equation~\eqref{e:dual.equivalence} well-defined on standard tableaux.

It will also be helpful to think of dual equivalence on standard
tableaux in terms of Young's lattice. Recall, that a standard tableau
is equivalent to a saturated chain in Young's lattice with the empty
partition as its minimal element.  If two standard tableaux $S$ and
$T$ are dual equivalent via an elementary dual equivalence on
$i-1,i,i+1$, then the length two interval corresponding to the
addition of $i$ and the further away of $i-1$ and $i+1$ will be the
Boolean poset on subsets of $\{1,2 \}$ ordered by containment, denoted
$B_{2}$. Indeed, any length two interval in Young's lattice is either
isomorphic to $B_{2}$ or a chain. In this paradigm, exchanging $i,i\mp
1$ is equivalent to traversing the length two interval where these
cells are added using the other saturated chain in the interval.

We say that two standard tableaux are \emph{dual equivalent} if one
can be obtained from the other by a sequence of elementary dual
equivalences.  The following theorem of Haiman \cite{Haiman1992}
together with Theorem~\ref{thm:quasisym} show that the sum over the
quasisymmetric functions in a dual equivalence class of standard
tableaux is precisely a Schur function.

\begin{theorem}\cite{Haiman1992}
  Two standard tableaux of partition shape are dual equivalent if and
  only if they have the same shape.
\label{prop:shape}
\end{theorem}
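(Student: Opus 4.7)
The forward direction is immediate: each elementary dual equivalence in~\eqref{e:dual.equivalence} permutes only three entries within their existing cells of the tableau, leaving the underlying cell set---and hence the shape---unchanged. In the Young's lattice picture described above, the move replaces one saturated chain through a $B_2$ interval by the other, preserving both endpoints.

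For the reverse direction, I would induct on $n = |\lambda|$. The base cases $n \le 3$ are trivial: only shape $(2,1)$ admits two distinct standard tableaux, and they differ by the single elementary move on $\{1, 2, 3\}$.

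For the inductive step, fix $S, T \in \SYT(\lambda)$ and let $c_S, c_T$ denote the cells containing $n$, both removable corners of $\lambda$. Granting a \emph{migration lemma} that $n$ can be transported between any two removable corners of $\lambda$ via dual equivalences, choose $S' \sim S$ with $n$ in $c_T$. The restrictions $S'|_{\le n-1}$ and $T|_{\le n-1}$ are then standard tableaux of the common shape $\lambda \setminus c_T$ on $n-1$ cells, hence dual equivalent by induction. Since every elementary move in this smaller sequence involves only indices $\le n-1$, it lifts verbatim to a move on the full tableau fixing $n$, yielding $S \sim S' \sim T$.

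The migration lemma is the main obstacle, and I would prove it as follows. Since the only move in~\eqref{e:dual.equivalence} involving $n$ is that on indices $\{n-2, n-1, n\}$---which swaps $n$ with $n-1$ provided $n-2$ lies strictly between them in the content reading order---the plan is to first use the inductive hypothesis on $S|_{\le n-1}$ to rearrange the sub-tableau within shape $\lambda \setminus c_S$ so that $n-1$ occupies $c_T$ and $n-2$ sits in a cell of content strictly between those of $c_S$ and $c_T$, and then apply the elementary move to swap $n$ into $c_T$. By transitivity on content-adjacent pairs of removable corners, the existence of such a pre-arrangement reduces to the combinatorial fact that when $c_S$ and $c_T$ are content-adjacent removable corners of $\lambda$, the shape $\lambda \setminus c_S \setminus c_T$ admits a removable corner whose content lies strictly between those of $c_S$ and $c_T$---a consequence of the local staircase structure of the boundary of $\lambda$ between $c_S$ and $c_T$.
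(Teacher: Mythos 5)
The paper offers no proof of this result; it simply cites Haiman~\cite{Haiman1992}, whose original argument proceeds through the machinery relating dual equivalence to jeu de taquin slides and normal forms of skew tableaux. Your proposal is a self-contained inductive proof that bypasses that machinery, and it is correct.

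The step you leave at the level of ``a consequence of the local staircase structure'' is the only one that needs fleshing out, and it does hold. With the binary-string (abacus) encoding of $\lambda$, a removable corner of content $c$ corresponds to a spacer at position $c$ followed by a bead at position $c+1$, i.e.\ the pattern $01$. If $c_S, c_T$ are content-adjacent removable corners with contents $p<q$, then positions $p,p+1$ and $q,q+1$ each read $01$, and the absence of a removable corner in between forbids any $01$ among positions $p+1,\dotsc,q+1$; hence the segment from $p+1$ to $q$ is $1^{a}0^{b}$ with $a,b\ge 1$ and $a+b=q-p\ge 2$. Removing $c_S$ and $c_T$ flips the two $01$'s to $10$'s, producing $1\,0\,1^{a-1}0^{b-1}\,1\,0$ on positions $p,\dotsc,q+1$, which always contains a $01$ at positions $p+1,p+2$ (when $a\ge 2$ or when $a=b=1$) or at positions $q-1,q$ (when $b\ge 2$), giving a removable corner of $\lambda\setminus c_S\setminus c_T$ with content strictly between $p$ and $q$. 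With that lemma in hand, your chain of reductions---build a tableau of shape $\lambda\setminus c_S$ with $n-1$ in $c_T$ and $n-2$ at the intermediate corner, invoke the inductive hypothesis on $\lambda\setminus c_S$, lift the resulting move sequence (all of which involve only indices $\le n-1$ and so commute with the fixed placement of $n$), apply the single elementary move on $\{n-2,n-1,n\}$, chain through content-adjacent corners, and finish with the inductive hypothesis on $\lambda\setminus c_T$---goes through without difficulty. The upshot is an argument that is more elementary than Haiman's jeu-de-taquin route, though also more local: it proves only this statement, whereas Haiman's framework simultaneously yields the compatibility of dual equivalence with slides (used elsewhere in this paper as~\cite[Lemma 2.3]{Haiman1992}).
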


Enrich the structure of these equivalence classes by tracking the
sequence of elementary dual equivalences taking one tableau to
another. Whenever $T$ and $U$ differ by an elementary dual equivalence
for $i-1,i,i+1$, connect $T$ and $U$ with an edge colored by
$i$. Additionally, we track the quasisymmetric function corresponding
to the given tableau by writing the descent signature $\sigma(T)$,
defined in Equation \eqref{eqn:sigma}, below each tableaux. Let
$\G_{\lambda}$ denote the graph on all standard tableaux of shape
$\lambda$.  See Figure~\ref{fig:G5} for examples of $\G_{\lambda}$.  

\begin{figure}[ht]
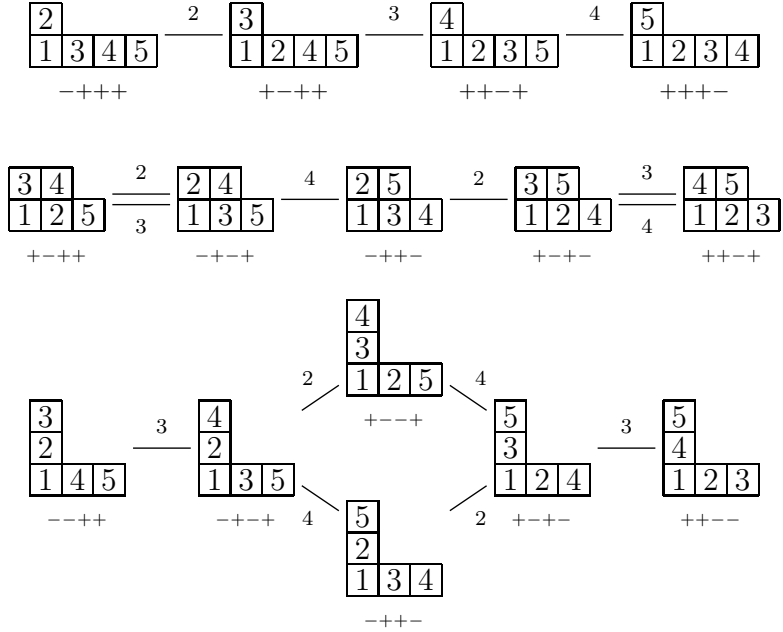

 \begin{center}
   \begin{displaymath}
     \begin{array}{c}
       \begin{array}{cccc}
         \stab{h}{2 \\ 1 & 3 & 4 & 5}{-+++} \ & \
         \stab{i}{3 \\ 1 & 2 & 4 & 5}{+-++} \ & \
         \stab{j}{4 \\ 1 & 2 & 3 & 5}{++-+} \ & \
         \stab{k}{5 \\ 1 & 2 & 3 & 4}{+++-}
       \end{array} \\[3\cellsize]
       \psset{nodesep=3pt,linewidth=.1ex}
       \everypsbox{\scriptstyle}
       \ncline  {h}{i} \naput{2}
       \ncline  {i}{j} \naput{3}
       \ncline  {j}{k} \naput{4}
       \begin{array}{ccccc}
         \stab{a}{3 & 4 \\ 1 & 2 & 5}{+-++} \ & \
         \stab{b}{2 & 4 \\ 1 & 3 & 5}{-+-+} \ & \
         \stab{c}{2 & 5 \\ 1 & 3 & 4}{-++-} \ & \
         \stab{d}{3 & 5 \\ 1 & 2 & 4}{+-+-} \ & \
         \stab{e}{4 & 5 \\ 1 & 2 & 3}{++-+}
       \end{array} \\[2\cellsize]
       \psset{nodesep=3pt,linewidth=.1ex}
       \everypsbox{\scriptstyle}
       \ncline[offset=2pt] {a}{b} \naput{2}
       \ncline[offset=2pt] {b}{a} \naput{3}
       \ncline             {b}{c} \naput{4}
       \ncline             {c}{d} \naput{2}
       \ncline[offset=2pt] {d}{e} \naput{3}
       \ncline[offset=2pt] {e}{d} \naput{4}
       \begin{array}{ccccc}
         & & \stab{w}{4 \\ 3 \\ 1 & 2 & 5}{+--+} & & \\[-1\cellsize]
             \stab{u}{3 \\ 2 \\ 1 & 4 & 5}{--++} \ & \
             \stab{v}{4 \\ 2 \\ 1 & 3 & 5}{-+-+} & &
             \stab{y}{5 \\ 3 \\ 1 & 2 & 4}{+-+-} \ & \
             \stab{z}{5 \\ 4 \\ 1 & 2 & 3}{++--} \\[-1\cellsize]
         & & \stab{x}{5 \\ 2 \\ 1 & 3 & 4}{-++-} & &
       \end{array}
       \psset{nodesep=3pt,linewidth=.1ex}
       \everypsbox{\scriptstyle}
       \ncline {u}{v}  \naput{3}
       \ncline {v}{w}  \naput{2}
       \ncline {v}{x}  \nbput{4}
       \ncline {w}{y}  \naput{4}
       \ncline {x}{y}  \nbput{2}
       \ncline {y}{z}  \naput{3}
     \end{array}
   \end{displaymath}
   \caption{\label{fig:G5}The standard dual equivalence graphs
     $\G_{(4,1)}, \G_{(3,2)}$ and $\G_{(3,1,1)}$.}
 \end{center}
\end{figure}

Define the generating function associated to $\G_{\lambda}$ by
\begin{equation}
 \sum_{v \in V(\G_{\lambda})} Q_{\sigma(v)}(X) = s_{\lambda}(X).
\label{eqn:glamschur}
\end{equation}
In particular, the generating function of any vertex-signed graph
whose connected components are all isomorphic to some $\G_{\lambda}$ is
automatically Schur positive.

\subsection{Dual equivalence graphs and \D graphs}
\label{sec:degs-graphs}

Given any collection of objects with an associated signature function,
the goal is to build a graph on the given objects that mimics the
structure of these $\G_{\lambda}$. To facilitate this, we recall the
local characterization of dual equivalence graphs presented in
\cite{Assaf2007-2}. First, we need a bit of terminology.

A {\em signed, colored graph of degree $m$} consists of the following
data: a vertex set $V$; a signature function $\sigma : V \rightarrow
\{\pm 1\}^{m-1}$; and for each $1 < i < m$, a collection $E_i$ of
unordered pairs of vertices of $V$ that represents the edges colored
$i$. We denote such a graph by $\G = (V,\sigma,E_{2} \cup \cdots \cup
E_{m-1})$ or simply $(V,\sigma,E)$.

We say that two signed, colored graphs are {\em isomorphic} if there
is a bijection between vertex sets that respects signatures and
color-adjacency. Definition~\ref{defn:deg} gives criteria for when a
signed, colored graph is isomorphic to $\G_{\lambda}$ by Theorem~\ref{thm:deg}.  

\begin{definition}
 A signed, colored graph $\G = (V,\sigma,E)$ of degree $m$ is a {\em
   dual equivalence graph} if the following hold:
 \begin{itemize}

 \item[{\em (ax$1$)}] For $w \in V$ and $1<i<m$, $\sigma(w)_{i-1} =
   -\sigma(w)_{i}$ if and only if there exists $x \in V$ such that
   $\{w,x\} \in E_{i}$.  Moreover, $x$ is unique when it exists.\\

 \item[{\em (ax$2$)}] \begin{tabbing} Whenever $\{w,x\} \in E_{i}$, \=
     $\sigma(w)_i = -\sigma(x)_i$  and \\
     \> $\sigma(w)_h = \hspace{1ex}\sigma(x)_h$ if $h <
     i-2$ or $h > i+1$.\\
   \end{tabbing}

 \item[{\em (ax$3$)}] \begin{tabbing} Whenever $\{w,x\} \in E_{i}$, \=
     if $\sigma(w)_{i-2} = -\sigma(x)_{i-2}$, then
     $\sigma(w)_{i-2} = -\sigma(w)_{i-1}$, and \\
     \> if $\sigma(w)_{i+1} = -\sigma(x)_{i+1}$, then $\sigma(w)_{i+1}
     = -\sigma(w)_{i}$.\\
   \end{tabbing}

 \item[{\em (ax$4$)}] For all $3<i<m$, every connected component of
   $(V,\sigma,E_{i-2} \cup E_{i-1} \cup E_{i})$ is either an isolated vertex or it is isomorphic to a graph in 
   Figure~\ref{fig:lambda5} after the signature function is restricted to positions $[i-2,i+1]$. 
	If $m=4$, every connected component of 
   $(V,\sigma, E_{2} \cup E_{3})$ is either an isolated vertex or it
   is isomorphic to a connected component in an induced subgraph of a graph
   in
   Figure~\ref{fig:lambda5} using only 2-edges and 3-edges and 
 restricting the  signature function to positions $[i-1,i+1]$. 
\\

 \item[{\em (ax$5$)}] Whenever $|i-j| \geq 3$, $\{w,x\} \in E_i$ and
   $\{x,y\} \in E_j$, there exists $v \in V$ such that $\{w,v\} \in
   E_j$ and $\{v,y\} \in E_i$.\\

 \item[{\em (ax$6$)}] Between any two vertices of a connected
component of $(V,\sigma,E_2 \cup \cdots \cup E_i)$, there exists a
path containing at most one edge in $E_i$.

 \end{itemize}
\label{defn:deg}
\end{definition}

\medskip


\begin{figure}[ht]
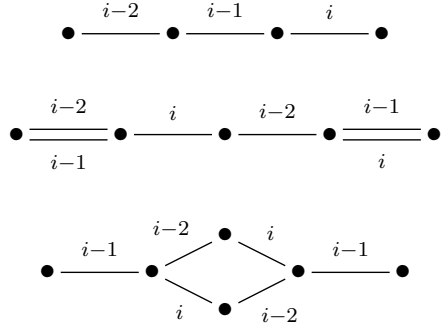

 \begin{displaymath}
   \begin{array}{c}
       \begin{array}{cccc}
         \rnode{h}{\bullet} \hspace{1em} & \hspace{1em}
         \rnode{i}{\bullet} \hspace{1em} & \hspace{1em}
         \rnode{j}{\bullet} \hspace{1em} & \hspace{1em}
         \rnode{k}{\bullet}
       \end{array} \\[2\cellsize]
       \begin{array}{ccccc}
         \rnode{a}{\bullet} \hspace{1em} & \hspace{1em}
         \rnode{b}{\bullet} \hspace{1em} & \hspace{1em}
         \rnode{c}{\bullet} \hspace{1em} & \hspace{1em}
         \rnode{d}{\bullet} \hspace{1em} & \hspace{1em}
         \rnode{e}{\bullet}
       \end{array} \\[2\cellsize]
       \begin{array}{ccccc}
         & & \rnode{w}{\bullet} & & \\
         \rnode{u}{\bullet} \hspace{1em} & \hspace{1em}
         \rnode{v}{\bullet} \hspace{1em} & & \hspace{1em}
         \rnode{y}{\bullet} \hspace{1em} & \hspace{1em}
         \rnode{z}{\bullet} \\
         & & \rnode{x}{\bullet} & &
       \end{array}
       \psset{nodesep=2pt,linewidth=.1ex}
       \everypsbox{\scriptstyle}
       \ncline  {h}{i} \naput{i-2}
       \ncline  {i}{j} \naput{i-1}
       \ncline  {j}{k} \naput{i}
       \ncline[offset=2pt] {a}{b} \naput{i-2}
       \ncline[offset=2pt] {b}{a} \naput{i-1}
       \ncline             {b}{c} \naput{i}
       \ncline             {c}{d} \naput{i-2}
       \ncline[offset=2pt] {d}{e} \naput{i-1}
       \ncline[offset=2pt] {e}{d} \naput{i}
       \ncline {u}{v}  \naput{i-1}
       \ncline {v}{w}  \naput{i-2}
       \ncline {v}{x}  \nbput{i}
       \ncline {w}{y}  \naput{i}
       \ncline {x}{y}  \nbput{i-2}
       \ncline {y}{z}  \naput{i-1}
     \end{array}
   \end{displaymath}
 \caption{\label{fig:lambda5} Possible $3$-color connected
   components of a dual equivalence graph with at least two vertices. Isolated vertices are also possible.}
\end{figure}

Comparing Figure~\ref{fig:G5} with Figure~\ref{fig:lambda5}, the largest possible connected
components of $(V,\sigma,E_{i-2}\cup E_{i-1} \cup E_i)$ are exactly
the graphs for $\G_{\lambda}$ when $\lambda$ is a partition of
$5$. Taking this comparison to its ultimate conclusion yields the
following result.

\begin{theorem}\cite{Assaf2007-2}
 For $\lambda$ a partition of $m$, $\G_{\lambda}$ is a dual
 equivalence graph of degree $m$. Moreover, every connected component
 of a dual equivalence graph of degree $m$ is isomorphic to
 $\G_{\lambda}$ for a unique partition $\lambda$ of $m$.
\label{thm:deg}
\end{theorem}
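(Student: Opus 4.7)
The plan is to prove the two assertions separately. For the first part — that $\G_{\lambda}$ satisfies (ax1)--(ax6) — I would verify each axiom directly from the definition of elementary dual equivalence. Axiom (ax1) is the content of the dual equivalence rule itself: the signature condition $\sigma(w)_{i-1} = -\sigma(w)_{i}$ is equivalent to saying that $i$ is the middle letter (by content) of $\{i-1,i,i+1\}$, and exactly in this case the swap of $i-1$ with $i+1$ yields a well-defined standard tableau $x$; uniqueness is immediate since the move is an involution. Axioms (ax2) and (ax3) are local: since an $i$-move only permutes the cells containing $i-1,i,i+1$, signature coordinates outside positions $\{i-2,i-1,i,i+1\}$ are untouched, and the changes at positions $i-2$ and $i+1$ can be read off from a short case analysis of the relative contents of $i-2,\ldots,i+2$. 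Axiom (ax5) reduces to the observation that when $|i-j|\geq 3$ the moves act on disjoint letter windows and therefore commute; one simply takes $v$ to be the tableau obtained by performing both swaps. Axiom (ax6) follows from Theorem~\ref{prop:shape}: any two tableaux of shape $\lambda$ are connected, and given a path between them, one can shorten it to use at most one $i$-edge by pushing $i$-moves past $j$-moves whenever $|i-j|\geq 3$ using (ax5) and applying the three-color analysis from (ax4).

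Axiom (ax4) is the heart of the forward direction, and would be verified by enumerating standard Young tableaux on the five letters $\{i-2,\ldots,i+2\}$ modulo standardization. This is exactly the calculation that produced Figure~\ref{fig:G5}: the five-letter windows are in bijection with standard tableaux of partitions of $5$, and the three-color $(i-2,i-1,i)$-components of $\G_{\lambda}$ restrict to subgraphs of the three $\G_{\mu}$ for $\mu \vdash 5$ shown in the figure. For the edge case $m=4$, only two colors are available, so one sees a subgraph of these pictures.

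For the converse, I would proceed by induction on $m$. The base cases $m\leq 5$ reduce to Figure~\ref{fig:lambda5}. For the induction step, let $\G=(V,\sigma,E)$ be a connected DEG of degree $m$. Deleting the $E_{m-1}$ edges yields, by (ax1)--(ax5) restricted to the first $m-2$ coordinates of the signature, a signed colored graph of degree $m-1$, each of whose connected components is a DEG; by induction each such component is isomorphic to some $\G_{\mu_\alpha}$ for $\mu_\alpha \vdash m-1$. Axiom (ax6) implies that the $E_{m-1}$ edges bridge these pieces into one component of $\G$, and axioms (ax1) and (ax4) together force each bridging edge to attach a copy of $\G_{\mu_\alpha}$ to a copy of $\G_{\mu_\beta}$ exactly as it would in $\G_{\lambda}$ for some $\lambda$ with $\lambda/\mu_\alpha$ and $\lambda/\mu_\beta$ each a single cell. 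I would then construct an explicit isomorphism $\Phi:V\to \SYT(\lambda)$ by choosing a basepoint $T_0\in \SYT(\mu_{\alpha_0})$ via the inductive isomorphism on one piece, extending $T_0$ by the cell dictated by the $E_{m-1}$ edge structure, and propagating along paths.

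The main obstacle is showing that this propagation is well-defined, i.e., that the global shape $\lambda$ is independent of the path taken and that the local inductive isomorphisms on different pieces $\G_{\mu_\alpha}$ patch consistently. This is where axiom (ax4) does heavy lifting: the three-color restricted components of Figure~\ref{fig:lambda5} are precisely the graphs of $\SYT(\nu)$ for $\nu\vdash 5$, and they certify that any ambiguity in propagating $\Phi$ around a closed loop corresponds to a sequence of swaps realizable inside a single $\SYT(\nu)$ — hence the composite map is the identity on that loop. Axioms (ax5) and (ax6) then promote this local consistency to the whole connected component. Uniqueness of $\lambda$ follows because the multiset of signatures in $\G_{\lambda}$ recovers the Schur expansion $s_\lambda$ in fundamental quasisymmetric functions, and distinct partitions give distinct Schur functions.
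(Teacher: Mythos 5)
The paper does not prove Theorem~\ref{thm:deg}; it is stated as a citation to \cite{Assaf2007-2} and invoked as a black box. So there is no in-paper proof to compare your argument against, but the argument itself has several issues worth flagging.

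For the forward direction, your reading of axiom (ax1) is inverted and your description of the move is wrong. From equation~\eqref{eqn:sigma}, $\sigma(w)_{i-1}=-\sigma(w)_{i}$ means the contents of $i-1,i,i+1$ form a peak or a valley at $i$, i.e.\ $i$ is \emph{not} the median by content; it is exactly in that case that the elementary move acts nontrivially, and the move swaps $i$ with the farther of $i-1,i+1$ (the median stays fixed as witness), not ``$i-1$ with $i+1$.'' You reach the right conclusion but via a backwards statement, so this should be rewritten. Your verification of (ax6) is also too thin: knowing from Theorem~\ref{prop:shape} that $\SYT(\lambda)$ is connected does not by itself let you reroute an arbitrary path to one using at most a single $E_i$-edge. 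The commutation in (ax5) lets you push an $E_i$-edge past an $E_j$-edge only when $|i-j|\geq 3$, and the three-color components of (ax4) control windows of three adjacent colors, but neither mechanism obviously reduces the number of $E_i$-edges on a path. This point needs a genuine argument (in \cite{Assaf2007-2} it is one of the more delicate steps).

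For the converse, the skeleton is right: restrict to the first $m-2$ signature coordinates, delete $E_{m-1}$, apply the inductive hypothesis to the resulting components, then show the $E_{m-1}$-edges glue copies of $\G_{\mu_\alpha}$ and $\G_{\mu_\beta}$ as they do in some $\G_{\lambda}$ with $\lambda/\mu_\alpha$ and $\lambda/\mu_\beta$ single cells. But the ``propagation is well-defined'' step is the entire theorem, and your appeal to (ax4) and (ax6) is not yet a proof: you need to show that the local isomorphisms on the $\G_{\mu_\alpha}$ pieces can be chosen consistently, that every $E_{m-1}$-edge adds the \emph{same} cell to produce one global $\lambda$, and that monodromy around cycles is trivial. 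Axiom (ax4) only constrains three consecutive colors, so controlling longer cycles requires combining it carefully with (ax6); this is not automatic. The closing observation that $\lambda$ is uniquely determined by the generating function via linear independence of Schur functions is correct. In short: the high-level strategy matches what one would expect, but the two places you identify as ``the main obstacle'' are exactly the places where the sketch stops short of a proof, and there are definitional slips in the easy axioms that should be fixed.
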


In practice, Axioms $1, 2$ and $5$ are trivially verified if $E_{i}$
is the set of pairs $\{(w, \phi_{i}(w)): w \neq \phi_{i}(w)\}$ determined by a family of
involutions $\phi_{i}: V \longrightarrow V$ such that for all $w \in
V$:
\begin{enumerate}
\item If $\sigma(w)_{i-1,i} = +-$, then $\sigma(\phi_{i}(w))_{i-1,i} =
  -+$, and vice versa.
\item Fixed points of $\phi_i$ are precisely those $w$ such that
  $\sigma(w)_{i-1,i} = ++$ or $--$.
\item The signatures $\sigma(w)$ and $\sigma(\phi_{i}(w))$ agree
outside the range of indices $i-2\leq j\leq i+2$.
\item The involutions $\phi_{i}$ and $\phi_{j}$ commute whenever
$|j-i|\geq 3$.
\end{enumerate}
Axiom 3 is typically verified by keeping track of a witness in each
case. The real difficulty lies in Axioms $4$ and $6$.

In \cite{Assaf2007-2}, the first author extended the notion of dual
equivalence in order to apply it to the
LLT and Macdonald polynomials.  For the extension, Axiom $6$ is no
longer required and Axiom $4$ is replaced by a weaker axiom.  Using
this generalized notion of dual equivalence, it was shown that
connected components have Schur positive generating functions but not
necessarily a single Schur function.  We use the same technique here
to prove that the terms in a $k$-Schur function can be partitioned in
connected components of a graph which are locally Schur positive.
Therefore, we review the necessary material from \cite{Assaf2007-2}
below.

\begin{definition}\label{def:graph.gf}
Let $\G = (V,\sigma,E)$ be a signed, colored graph.   Define the generating function associated to $\G$ to be 
\[
F_{\G}(X) = \sum_{v\in V} Q_{\sigma (v)}(X).
\]
\end{definition}

\begin{definition}\label{defn:lsp}
  A signed, colored graph $\G = (V,\sigma,E)$ of degree $m$ is a {\em
\D graph} if Axioms 1, 2, 3 and 5 (from Definition~\ref{defn:deg})
hold for $\G$.  A \D graph is said to be {\em locally Schur positive
on $h$-colored edges}, denoted $\lsp_{h}$, provided for all $2\leq
h<i<m$:

  \begin{itemize} 
\item[{ ($\lsp_{h}$)}] Every connected component of
$(V,\sigma,E_{i-h+1}\cup \ldots \cup E_{i})$ using  $h$ consecutive edge sets with signatures
restricted to positions $[i-h,i+1]$ has a symmetric and Schur positive
generating function.




\end{itemize} \label{defn:D-graph}
\end{definition}

For example, all of the graphs on Page~\pageref{fig:dgraph531} are
locally Schur positive on 2-colored edges and 3-colored edges.  Notice that any \D graph
satisfying Axioms $4$ and $6$ necessarily implies the graph is
$\lsp_h$ for all $h$ by Theorem~\ref{thm:deg}.

Observe that the signature function of a \D graph can be recovered
from the edges plus a single sign in any one signature on any one
vertex via the axioms.  Thus each graph in Figure~\ref{fig:lambda5}
can be assigned signature functions in exactly 2 ways which make them
into a \D graph.  The third graph can only be signed in one way up to
isomorphism.

\section{Poset on $n$-cores}
\label{sec:poset}

In order to define an analog of dual equivalence for starred strong
tableaux, we must first understand saturated chains in the $n$-core
poset. In this section, we do this by exploiting the connection
between $n$-cores and $\aSn$ using the abacus model for
partitions.

\subsection{Covering relations}
\label{sec:poset-cover}

We can describe the $n$-core poset more directly using the abacus
model for cores from \cite{JaKe1981}. Consider the diagram of a
partition $\lambda$, not necessarily an $n$-core, lying in the
$\mathbb{N} \times \mathbb{N}$ plane with infinite positive axes. Walk
in unit steps along the boundary of $\lambda$ placing a bead
($\bullet$) on each vertical step and a spacer ($\circ$) on each
horizontal step. Then straighten the boundary to get a doubly infinite
rod with the main diagonal marked by a vertical line. This gives the
binary string uniquely representing $\lambda$ when beads are replaced
by $1$'s and spacers by $0$'s. For example, we construct the string
for $(4,2)$ as follows.
\label{example.balanced}
\begin{displaymath} 
  \raisebox{-3em}{%
  \psset{unit=1em,linewidth=.1ex}
  \pspicture(0,-1)(6,4)
  \psline(0,2)(2,2)
  \psline(0,1)(4,1)
  \psline(0,0)(6,0)
  \psline(0,0)(0,4)
  \psline(1,0)(1,2)
  \psline(2,0)(2,2)
  \psline(3,0)(3,1)
  \psline(4,0)(4,1)
  \psline(0,0)(2.5,2.5)
  \rput(0,4.5){$\vdots$}
  \rput(0,3.5){$\bullet$}
  \rput(0,2.5){$\bullet$}
  \rput(2,1.5){$\bullet$}
  \rput(4,0.5){$\bullet$}
  \rput(6.5,0){$\ldots$}
  \rput(5.5,0){$\circ$}
  \rput(4.5,0){$\circ$}
  \rput(3.5,1){$\circ$}
  \rput(2.5,1){$\circ$}
  \rput(1.5,2){$\circ$}
  \rput(0.5,2){$\circ$}
  \endpspicture}
  \Longrightarrow \hspace{1em}
  \rnode{l}{\cdots}  \ \bullet \ \bullet \ \circ   \ \circ \ \Big| \
  \bullet \ \circ   \ \circ  \ \bullet \ \circ   \ \circ   \
  \rnode{r}{\cdots}
  \psset{nodesep=2pt,linewidth=.1ex}
  \ncline {l}{r}
\end{displaymath}
Define the \textit{content} of a bead or spacer to be the content of the
diagonal immediately southeast. Indexing each bead or spacer by its
content gives an injective map from partitions to binary strings. The
\emph{abacus associated to $\lambda$} is the binary string of
$\lambda$ with beads and spacers indexed by their content. 

\begin{remark}
  Given any doubly infinite binary string $s$ such that $s_i$ is a
  bead for all $i<l$ and $s_i$ is a spacer for all $i>r$ for some
  $l,r$, there is a unique re-indexing of $s$ making it an abacus
  associated to a (unique) partition.
\label{rmk:bipart}
\end{remark}

Interchanging a bead on the abacus of $\mu$ with a spacer $m$ places
to its right corresponds to adding a ribbon of length $m$ to $\mu$,
and similarly interchanging a bead with a spacer $m$ positions to its
left removes an $m$-ribbon from $\mu$. In particular, if the moving
bead lands in position $s$, then the head of the
added ribbon will have content $s-1$.

Divide the abacus into $n$ rods, each containing all beads and spacers
of the same residue. Removing an $n$-ribbon from the boundary of
$\lambda$ precisely corresponds to moving a bead left along its
rod. Therefore $\lambda$ is an $n$-core precisely when each rod is an
infinite string of beads followed by an infinite string of spacers.
Define the \emph{content of a rod} to be the content of the bead or
spacer immediately to the right of the vertical line marking the main
diagonal.  We will identify a rod by its content throughout the paper.
Continuing with the previous example, taking $n=3$ gives the following
abacus decomposition of $(4,2)$, showing rods $1,2$ and $3$.
 \begin{displaymath}
  \raisebox{-3em}{%
  \psset{unit=1em}
  \pspicture(0,-1)(6,4)
  \psline(0,2)(2,2)
  \psline(0,1)(4,1)
  \psline(0,0)(6,0)
  \psline(0,0)(0,4)
  \psline(1,0)(1,2)
  \psline(2,0)(2,2)
  \psline(3,0)(3,1)
  \psline(4,0)(4,1)
  \psline(0,0)(2.5,2.5)
  \rput(0,4.5){$\vdots$}
  \rput(0,3.5){$\bullet$}
  \rput(0,2.5){$\bullet$}
  \rput(2,1.5){$\bullet$}
  \rput(4,0.5){$\bullet$}
  \rput(6.5,0){$\ldots$}
  \rput(5.5,0){$\circ$}
  \rput(4.5,0){$\circ$}
  \rput(3.5,1){$\circ$}
  \rput(2.5,1){$\circ$}
  \rput(1.5,2){$\circ$}
  \rput(0.5,2){$\circ$}
  \endpspicture}
  \Longrightarrow \hspace{1em}
  \begin{array}{lr}
    1 &  \rnode{l1}{\cdots}  \ \bullet \ \bullet \
    \Big| \ \bullet \ \bullet \ \circ \ \rnode{r1}{\cdots} \\
    2 &  \rnode{l2}{\cdots}  \ \bullet \ \circ \ \Big|
    \ \circ \ \circ \ \circ \ \rnode{r2}{\cdots} \\ 
    3 &  \rnode{l3}{\cdots}  \ \bullet \ \circ \ \Big|
    \ \circ \ \circ \ \circ \ \rnode{r3}{\cdots} \\ 
  \end{array}
  \psset{nodesep=2pt,linewidth=.1ex}
  \ncline {l1}{r1}    
  \ncline {l2}{r2}    
  \ncline {l3}{r3}    
\end{displaymath}

\begin{remark}
  Rotating the bottom row of the $n$-rod abacus for $\mu$ to the top
and shifting all beads in that row one column to the right will again
represent the abacus for $\mu$, but now shifted so that the rods have
contents $0, \ldots, n-1$ from top to bottom. Similarly, rotating the
top row down to the bottom and shifting all beads left on that row gives the
$n$-rod abacus for $\mu$ with contents $2, \ldots, n+1$.  Thus, the
abacus can be represented by $n$ rods of contents $k,k+1,k+2,\dots
,k+n-1$ for any integer $k$ by scrolling up or down.
\label{rmk:rotate} \end{remark}

Define the \emph{length} of each rod of the $n$-rod abacus as
follows. For $i=1,2,\ldots,n$, define the length of the rod with
content $i$ to be the number of beads on the rod with positive content
minus the number of spacers on the rod with nonpositive content (at
most one of these numbers is nonzero). For example, the lengths of
rods $1,2,3$ for the $3$-core $(4,2)$ are $2,-1,-1$.  In line with
Remark~\ref{rmk:rotate}, define the length of the remaining rods by
setting the length of rod $i-n$ equal to one plus the length of rod
$i$. It is sometimes convenient to rescale the lengths of the rods so
that the rods $1,2,\ldots,n$ have nonnegative length with at least one
having length $0$. For now, we are concerned only with the relative
lengths of the rods.

Affine permutations act on $n$-core partitions as discussed in Section
\ref{sec:defs-perms}. This action can be stated in terms of abaci as
well.  Recall we can represent a partition by an infinite binary
string.  Since affine permutations are bijections from $\mathbb{Z}$ to
$\mathbb{Z}$, we can apply such a bijection to any binary string.  If the
binary string represents an $n$-core then any affine transposition
applied to the binary string will also represent an $n$-core.  We
leave it to the reader to verify this action is consistent with the
action of simple affine transpositions acting on $n$-cores described
earlier.  In particular, the action of an affine transposition on an
$n$-core can be thought of pictorially as exchanging two rods of its
abacus and modifying all $n$-translates of these two rods
accordingly. The following observations, also noted in \cite{LLMS},
follow easily from the abacus model.

\begin{proposition}\label{p:rod.cover}  
The following statements hold for an $n$-core $\mu$ and $t_{r,s} \in
\aSn$ with $r<s$, $r \not \equiv s$:
\begin{enumerate}
\item The abacus for $t_{r,s} \mu$ is obtained from the abacus for
$\mu$ by swapping the lengths of the two rods with contents $r$ and
$s$.  All rods with content distinct from $r,s \mod n$ have the same
length in $\mu$ and $t_{r,s}\mu$.
\item In the $n$-core poset, $t_{r,s}\mu > \mu$ if and only if the rod
of content $r$ has larger length than the rod of content $s$ in $\mu$.
\item An $n$-core $\lambda$ covers $\mu$ if and only if $\lambda
= t_{p,q}\mu $ for some pair $p<q$, $p \not \equiv q$ such that in the
abacus for $\mu$ there is a bead at position $p$, a spacer at position
$q$, and no rod between $p$ and $q$ has length weakly between the
length of rod $p$ and the length of rod $q$.  Furthermore, the head
and tail of one ribbon in $\lambda /\mu $ have contents $q-1$ and $p$
respectively.
\end{enumerate}
\end{proposition}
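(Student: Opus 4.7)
For Part (1), the plan is to reduce to the case of simple generators $s_i = t_{i,i+1}$. By Definition~\ref{defn:aff.to.n-cores}, when the length increases, $s_i$ acts on an $n$-core $\mu$ by adding every addable cell of residue $i$. On the abacus, an addable cell of residue $i$ is exactly a bead at some position $i + pn$ immediately followed by a spacer at $i + pn + 1$, and because $\mu$ is an $n$-core each rod is uniformly a block of beads followed by spacers, so these adjacent bead-spacer swaps are exactly what the position permutation $t_{i,i+1}$ performs on the abacus. Composing simple generators extends this to any element of $\aSn$, giving Part (1). The rod-length claim is then immediate: the new rod at content $r$ inherits the old bead-spacer pattern of content $s$, so the two lengths swap, while rods with residue different from $r$ and $s$ are untouched.

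For Part (2), the plan is to combine Part (1) with the pointwise description of containment of $n$-cores via abaci. Parameterize positions on the rod with content $r$ as $r + kn$ for $k \in \mathbb{Z}$; since $\mu$ is an $n$-core, there is a bead at $r + kn$ exactly when $k$ lies below a threshold determined by $\ell_r$. Suppose $r < s$. For each offset $k$ with $\ell_s \leq k < \ell_r$ there is a bead at $r + kn$ and a spacer at $s + kn$, and the swap $t_{r,s}$ moves this bead rightward from $r + kn$ to the strictly larger position $s + kn$; all other beads are either fixed or exchanged among positions carrying identical bead-spacer status, so the set of bead positions weakly increases. This gives $\mu \subsetneq t_{r,s}\mu$ when $\ell_r > \ell_s$, while the reverse inequality and the equality case are handled symmetrically.

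For Part (3), the plan is to characterize covers by ruling out intermediate $n$-cores. The hypotheses ``bead at $p$, spacer at $q$'' translate to $\ell_p \geq 1 > 0 \geq \ell_q$, so by Part (2) we already have $\lambda = t_{p,q}\mu \supsetneq \mu$. Suppose some rod at content $c$ with $p < c < q$ has length $\ell_c$ weakly between $\ell_p$ and $\ell_q$; then comparing the ribbon decompositions of $t_{p,c}\mu$ and $t_{c,q}\mu$ offset by offset to that of $t_{p,q}\mu$ shows that at least one of these partial-swap $n$-cores lies strictly between $\mu$ and $\lambda$ (for instance, the cells added by $t_{p,c}$ at offset $k$ occupy contents $p+kn, \ldots, c-1+kn$, a proper subset of the contents $p+kn, \ldots, q-1+kn$ added by $t_{p,q}$). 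Conversely, under the no-intermediate-length hypothesis one must show that no $n$-core $\nu$ satisfies $\mu \subsetneq \nu \subsetneq \lambda$; the cleanest route is to transport the question to $\aSn$ using the bijection $\aff$ from Theorem~\ref{t:lascoux} and appeal to the standard Bruhat covering criterion for affine transpositions in Coxeter group theory. Finally, the ribbon statement follows by tracking the boundary change when the bead at position $p$ slides to the spacer at position $q$: the newly enclosed cells form a connected ribbon whose northwesternmost cell has content $p$ and southeasternmost cell has content $q-1$. The main obstacle is this converse direction in Part (3), which requires ruling out all potential intermediate $n$-cores and not merely those arising as single affine transpositions applied to $\mu$.
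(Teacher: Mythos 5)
Your Part (1) is correct but takes a longer route than necessary: rather than decomposing $t_{r,s}$ into simple generators and verifying that each $s_i$ adds/removes cells of residue $i$ in agreement with the abacus, one can simply apply $t_{r,s}$ directly to the binary string representation, since the position-permutation automatically exchanges the two rods of contents $r$ and $s$ together with all their $n$-translates. That is the paper's one-line argument, and it also avoids the case split on whether $\ell(s_i w)$ increases or decreases that your reduction implicitly requires. Your Part (2) is essentially the paper's argument (moving beads right adds ribbons), though your phrasing "the set of bead positions weakly increases" is not by itself a criterion for partition containment; the operative point is that each bead that moves, moves strictly rightward, and a single rightward bead slide adds a ribbon.

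Part (3) has a concrete error and an acknowledged gap. The translation of "bead at $p$, spacer at $q$" into $\ell_p \geq 1 > 0 \geq \ell_q$ is false: rod lengths are measured relative to the main diagonal, whereas $p$ and $q$ are arbitrary positions with $p<q$. For instance, if $p\leq 0$ then the existence of a bead at $p$ is entirely compatible with $\ell_p < 0$. The translation is also unnecessary; the relevant fact, already implicit in your Part (2) reasoning, is just that a bead at $p$ and a spacer at $q$ with $p<q$ means $t_{p,q}$ moves a bead rightward, hence $t_{p,q}\mu \supsetneq \mu$. More seriously, the two hard steps of Part (3) — that every cover arises from a single affine transposition $t_{p,q}$, and that the no-intermediate-rod-length condition excludes every intermediate $n$-core (not merely those of the form $t_{c,d}\mu$) — are both delegated to "the standard Bruhat covering criterion," and you yourself flag the second as an unresolved obstacle. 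The Bruhat route can be made to work using Theorem~\ref{t:lascoux} together with the characterization of covers in Bruhat order on a parabolic quotient by reflections raising length by exactly one, but you would then still need to translate "$\ell(t_{p,q}w)=\ell(w)+1$" into the abacus condition on rod lengths, which is essentially the content of the proposition itself. The paper's intended argument is a direct elementary one on the abacus (as its terse proof signals), and in either approach the converse direction needs to actually be carried out rather than cited.
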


\begin{proof} The first statement follows form the action of an affine
permutation on infinite binary strings.  The
  second statement is immediate since moving beads right adds ribbons
  and moving beads left removes ribbons. The third statement also
  follows from this interpretation.
\end{proof}

Proposition~\ref{p:rod.cover} is enough to describe precisely what
$\lambda/\mu$ may look like when $\lambda$ covers $\mu$ in the
$n$-core poset. The condition on the lengths of the rods that lie
between the interchanging rods of the abacus implies that the
connected components of $\lambda/\mu$ are identical ribbons. By
Remark~\ref{rmk:rotate}, the two rods being exchanged must have
distinct residues and no rod between them may have the same residue as
either of them. The contents across which the beads move determine the
contents contained in the ribbons, and the fact that both rods are
beads followed by spacers ensures that the ribbons lie on consecutive
residues. These observations reprove the following statement due to
Lam, Lapointe, Morse and Shimozono.

\begin{corollary}\cite[Prop. 9.5]{LLMS}
  Let $\mu$ be an $n$-core and $t_{r,s}$ an affine transposition such
that $t_{r,s} \mu$ covers $\mu$ in the $n$-core poset.  Then $0<s-r
<n$ and the connected components of $t_{r,s}\mu/\mu$ are identical
shape ribbons with cell residues from $r \mod n$ to $s-1 \mod
n$. Moreover, if rod $r$ has $k>0$ more beads than rod $s$, then
$t_{r,s}\mu/\mu$ has exactly $k$ identical ribbons.  If the head of
the first ribbon lies in a cell with content $c$, then the head of the
other ribbons have content $c+n, c+2n,\dots , c+(k-1)n$.  
\label{cor:iribbons}
\end{corollary}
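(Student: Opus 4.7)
My plan is to work entirely on the abacus, taking Proposition~\ref{p:rod.cover} as the main input, and to read the four claimed conclusions directly off the picture of the swap $t_{r,s}$.

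For the bound $0 < s - r < n$: the left inequality is given. Since $r \not\equiv s \pmod{n}$ rules out $s - r = n$, it suffices to exclude $s - r > n$. In that case the content $r + n$ lies strictly between $r$ and $s$, and by the length convention in Remark~\ref{rmk:rotate} the rod of content $r + n$ has length $L_r - 1$, where $L_r$ and $L_s$ denote the lengths of the rods of contents $r$ and $s$ in $\mu$. Proposition~\ref{p:rod.cover}(2) together with the cover hypothesis gives $L_r > L_s$, so $L_s \leq L_r - 1 \leq L_r$, whence the rod of content $r + n$ has length weakly between $L_r$ and $L_s$, contradicting Proposition~\ref{p:rod.cover}(3).

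For the ribbon description: Proposition~\ref{p:rod.cover}(1) says that $t_{r,s}$ swaps the lengths of rods $r$ and $s$, which on the abacus is realized by moving beads from the positions $r + jn$ of rod $r$ to the positions $s + jn$ of rod $s$, as $j$ runs over a block of $k$ consecutive integers. Each single bead move from $r + jn$ to $s + jn$ adds a ribbon of length $s - r$ whose head sits at content $s + jn - 1$, so the $k$ heads have contents $c, c + n, \dots, c + (k-1)n$ for the starting content $c$. The cells of the ribbon attached to index $j$ occupy the contiguous range of contents $r + jn, r + jn + 1, \dots, s + jn - 1$, one per content, which gives exactly one cell of each residue $r, r+1, \dots, s-1 \pmod{n}$.

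It remains to show that the $k$ ribbons are identical in shape. The profile of the ribbon produced for a given $j$ is determined by the bead/spacer pattern of the $s - r - 1$ intermediate positions $r + jn + t$ for $1 \leq t \leq s - r - 1$, with a bead encoding a vertical step and a spacer a horizontal step. Since $0 < s - r < n$, each such position lies on a distinct rod $\rho$ whose residue differs from both $r$ and $s$, and Proposition~\ref{p:rod.cover}(3) forces $L_\rho > L_r$ or $L_\rho < L_s$. In the first case, $\rho + jn$ is a bead for every $j$ in the active range, and in the second case it is a spacer for every such $j$; the intermediate pattern is therefore independent of $j$, so the $k$ ribbons are translates of a single shape. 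I expect this uniformity step to be the main technical obstacle, as it requires pinning down exactly how the length inequality for each intermediate rod translates into a $j$-uniform bead/spacer assignment across the full block of $k$ moves; once that is in hand, the remaining assertions are immediate bookkeeping on the abacus.
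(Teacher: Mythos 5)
Your proof is correct and takes the same route the paper does: the paper's argument is the short paragraph preceding the corollary, which invokes Proposition~\ref{p:rod.cover}, Remark~\ref{rmk:rotate}, and the abacus bead-moving description in exactly the way you have made explicit. Your derivation of $s-r<n$ via the length $L_r-1$ of the rod at content $r+n$ is precisely the intended content of the paper's appeal to Remark~\ref{rmk:rotate}, and your ``uniformity step'' (that Proposition~\ref{p:rod.cover}(3) forces each intermediate rod to give a bead for every active $j$ or a spacer for every active $j$, hence identical ribbon profiles) is the intended content of ``the condition on the lengths of the rods that lie between the interchanging rods implies the connected components are identical ribbons.'' Note that you flag the uniformity step as a remaining obstacle, but the argument you wrote for it is in fact complete: $j$ ranges over $\{L_s,\dots,L_r-1\}$, and $L_\rho>L_r$ or $L_\rho<L_s$ pins down the bead/spacer status of $\rho+jn$ uniformly over that range, so nothing further is needed there.
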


By Corollary~\ref{cor:iribbons}, for a strong tableau $S$ of shape
$\lambda$, call the connected components of $\lambda_i /
\lambda_{i-1}$ the \emph{$i$-ribbons of $S$}.  Recall from
Section~\ref{sub:starred.strong.tableaux} that a starred strong
tableau consists of a strong tableau plus a choice of $i$-ribbon for
each $i$ present in $S$.  We use the next definition and corollary to
relate the starred strong tableaux to saturated chains labeled by
certain sequences of transpositions.   

\begin{definition}\label{defn:transposition.sequences}
  Let $\mu \subset \lambda$ be $n$-cores, and let
  $\Trans{\lambda/\mu,n}$ be the set of all \emph{transposition
    sequences}
  \[
  \left(t_{r_{1}s_{1}}\rightarrow t_{r_{2}s_{2}}\rightarrow \dotsb
    \rightarrow t_{r_{m}s_{m}} \right)
  \]
  such that
  \begin{enumerate}
  \item the product $t_{r_{m}s_{m}} \dotsb t_{r_{2}s_{2}} t_{r_{1}s_{1}}
    \mu= \lambda$ as elements of $\aSn/\Sn$;
  \item for each $1\leq i\leq m$, we have $0<s_{i}-r_{i}<n$;
  \item for each $0\leq i< m$, the abacus for $\mu^{(i)}=
    t_{r_{i}s_{i}} \dotsb t_{r_{2}s_{2}} t_{r_{1}s_{1}} \mu $ contains a
    bead at position $r_{i+1}$, a spacer at position $s_{i+1}$, and every rod	
    with content    between  $r_{i+1}$ and $s_{i+1}$ has length strictly 
     smaller than both the length of rod $r_{i+1}$ and the length of rod $s_{i+1}$ or strictly larger than both.
\end{enumerate}
\end{definition}

By Proposition~\ref{p:rod.cover}, condition (3) above implies $\mu
=\mu^{(0)}<\mu^{(1)}<\dotsb < \mu^{(m)}=\lambda$ forms a saturated
chain in the $n$-core poset.  The following is a consequence of
Proposition~\ref{p:rod.cover} and Corollary~\ref{cor:iribbons}.

\begin{corollary}\label{cor:transposition.seq.bijection}
  Let $\mu \subset \lambda$ be $n$-cores.  There exists a bijection
  from skew starred strong tableaux $S^{*} \in \SST(\lambda/\mu, n)$
  to $\Trans{\lambda/\mu, n}$ given by mapping
  \[
  S^{*} \mapsto
  \left(t_{r_{1}s_{1}}\rightarrow t_{r_{2}s_{2}}\rightarrow \dotsb
    \rightarrow t_{r_{m}s_{m}} \right) 
  \]
  where $s_{i}-1$ and $r_{i}$ are the contents of the head and tail of
  the $i$-ribbon containing $i^{*}$ in $S^{*}$.
\end{corollary}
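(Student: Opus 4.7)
The plan is to build the bijection by reading the transposition sequence directly off the abacus model. By Corollary~\ref{cor:iribbons}, each covering step in the $n$-core poset is realized by an affine transposition $t_{r,s}$ with $0 < s - r < n$, and the choice of starred ribbon at each step of $S^*$ pins down a specific integer representative of that affine transposition (exploiting the identification $t_{r,s} = t_{r+n, s+n}$ in $\aSn$).

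For the forward direction, given $S^* \in \SST(\lambda/\mu, n)$ with underlying chain $\mu = \mu^{(0)} \subset \cdots \subset \mu^{(m)} = \lambda$, examine each cover $\mu^{(i-1)} \lessdot \mu^{(i)}$. By Corollary~\ref{cor:iribbons} the skew shape $\mu^{(i)}/\mu^{(i-1)}$ consists of $k \geq 1$ identical ribbons whose head contents form the arithmetic progression $c, c+n, \ldots, c+(k-1)n$. Defining $(r_i, s_i)$ as in the statement selects exactly the integer representative whose abacus bead moves from position $r_i$ to position $s_i$ on $\mu^{(i-1)}$, by Proposition~\ref{p:rod.cover}(3). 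This verifies condition (2) of Definition~\ref{defn:transposition.sequences} from $0 < s_i - r_i < n$, condition (1) because $t_{r_i s_i}\mu^{(i-1)} = \mu^{(i)}$ for each $i$, and condition (3) as precisely the covering criterion of Proposition~\ref{p:rod.cover}(3). For the inverse, given a transposition sequence, apply the $t_{r_i s_i}$ in order --- which yields a saturated chain by Proposition~\ref{p:rod.cover}(3) combined with condition (3) of the sequence --- and at each step star the unique added ribbon whose head has content $s_i - 1$; such a ribbon exists and is unique because $s_i$ specifies which shift $c + jn$ to pick out. Both compositions of the forward and inverse maps are the identity by construction.

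The main conceptual point, rather than a true obstacle, is keeping straight the distinction between the affine transposition as an element of $\aSn$ (where $t_{r,s} = t_{r+n, s+n}$) and its integer-level labeling $(r, s) \in \mathbb{Z}^2$ used to index the sequence. The affine class governs the action on $n$-cores, but the integer representative distinguishes which specific ribbon gets starred. Corollary~\ref{cor:iribbons} supplies exactly the bijection between integer representatives of a given affine transposition and the set of ribbons available to star, and this is what makes the correspondence work cleanly.
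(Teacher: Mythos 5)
Your proof is correct and matches the paper's intent exactly: the paper states this corollary as a direct consequence of Proposition~\ref{p:rod.cover} and Corollary~\ref{cor:iribbons} without spelling out the details, and what you have written is precisely the verification being left to the reader. The central observation — that a single covering step in the $n$-core poset corresponds to one affine transposition class $t_{r,s}$ but that the choice of starred ribbon selects a specific integer representative $(r_i,s_i)$ among the $n$-translates $(r+jn,\,s+jn)$, with condition (3) of Definition~\ref{defn:transposition.sequences} encoding the covering criterion from Proposition~\ref{p:rod.cover}(3) — is exactly the mechanism the authors have in mind.
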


For example, this bijection maps   
\begin{displaymath}
  \raisebox{\cellsize}{\tableau{4^{*} \\ 3 \\ 1^{*} & 2^{*} & 3^{*}}} \mapsto
  \left(t_{0,1}\rightarrow t_{1,2}\rightarrow t_{2,3}\rightarrow
  t_{-2,-1} \right).
\end{displaymath}

\subsection{Intervals of length two}
\label{sec:poset-intervals}

As motivation, recall that an elementary dual equivalence on standard
tableaux may be defined in terms of interval exchanges in Young's
lattice. Though the induced poset on $n$-cores in not as nice as
Young's lattice, Bj\"{o}rner and Brenti \cite{BjBr1996} showed that
any interval of length two is either a chain or isomorphic to $B_{2}$.

\begin{definition}
  Let $S = (\emptyset = \mu^0 \subset \mu^1 \subset \cdots \subset \mu^{m})$ be a
  saturated chain in the $n$-core poset such that the interval
  $[\mu^{i-1},\mu^{i+1}]$ is not a chain for $0 < i < m$. The
  \emph{$i$-interval swap} on $S$, denoted
  $\swap_{i,i+1}(S)=\swap_{i+1,i}(S)$, replaces $\mu^i$ with the
  unique other $n$-core at rank $i$ in $[\mu^{i-1},\mu^{i+1}]$.
  \label{defn:swap}
\end{definition}

For example, from Figure~\ref{fig:poset} we see that a $2$-interval
swap on the chain
\begin{displaymath}
  \emptyset \hspace{.5\cellsize} \subset \hspace{.5\cellsize} 
  \tableau{\e} \hspace{.5\cellsize} \subset \hspace{.5\cellsize} 
  \tableau{\e & \e} \hspace{.5\cellsize} \subset \hspace{.5\cellsize}
  \raisebox{\cellsize}{\tableau{\e \\ \e & \e & \e}} 
  \hspace{.5\cellsize} \subset \hspace{.5\cellsize} 
  \raisebox{2\cellsize}{\tableau{\e \\ \e \\ \e & \e & \e}}
\end{displaymath}
results in the chain
\begin{displaymath}
  \emptyset \hspace{.5\cellsize} \subset \hspace{.5\cellsize} 
  \tableau{\e} \hspace{.5\cellsize} \subset \hspace{.5\cellsize} 
  \raisebox{\cellsize}{\tableau{\e \\ \e}} 
  \hspace{.5\cellsize} \subset \hspace{.5\cellsize}
  \raisebox{\cellsize}{\tableau{\e \\ \e & \e & \e}} 
  \hspace{.5\cellsize} \subset \hspace{.5\cellsize} 
  \raisebox{2\cellsize}{\tableau{\e \\ \e \\ \e & \e & \e}}.
\end{displaymath}
In terms of the strong tableaux, the same 2-interval swap gives
\begin{equation}\label{e:swap.2.3}
  \raisebox{\cellsize}{\tableau{4 \\ 3 \\ 1 & 2 & 3}}
  \hspace{\cellsize} 
  \stackrel{\swap_{2,3}}{\longleftrightarrow}
  \hspace{\cellsize} 
  \raisebox{\cellsize}{\tableau{4 \\ 2 \\ 1 & 3 & 3}}. 
\end{equation}

By Definition~\ref{defn:transposition.sequences}, the same two
saturated chains can be represented by the following transposition
sequence
\[
\swap_{2,3}\left(t_{0,1}\rightarrow t_{1,2}\rightarrow
  t_{2,3}\rightarrow t_{-2,-1} \right)
=t_{0,1}\rightarrow t_{-1,0}\rightarrow t_{1,3} \rightarrow t_{-2,-1}, 
\]
where only the two transpositions in the middle are modified.  In
general, the map $\swap_{i,i+1}(S)$ always modifies the $i$th and the
$i+1$st transpositions in any transposition sequence representing the
saturated chain $S$ and leaves all other transpositions in the
sequence fixed.  The two new transpositions are not unique however
since we have not yet described how the stars will move.  This
extension will be called a $\basic$ and introduced in
Section~\ref{sec:equivalence}.  First, we need a complete
understanding of how the $i$-ribbons and the $i+1$-ribbons can appear
in a strong tableau and how they change under an $i$-interval swap.

Using the abacus model for cores and Proposition~\ref{p:rod.cover}, we
can explicitly describe the result of an $i$-interval swap on a strong
tableau $S=(\emptyset \subset \mu^1 \subset \cdots \subset \mu^{m})$
in terms of the two rod exchanges corresponding to the covering
relations in the interval $[\mu^{i-1},\mu^{i+1}]$. In order to analyze
two consecutive rod exchanges, we extend the $n$-rod abacus picture to
include extra rods above as necessary so that the four rods to be
exchanged all appear as rows of the picture with the longer row above
the shorter row.  Ignoring the rods which are untouched by the
exchange and choosing representatives of the exchanging rods as close
together as possible, there are four natural cases to consider, each
depicted in Figure~\ref{fig:abaci}: disjoint, interleaving, nested and
abutting. There are three possible ways for the rod exchanges to be
abutting; the two depicted and also the reverse of the right hand
side. For the first three cases in Figure~\ref{fig:abaci}, the
corresponding transpositions will have four distinct residues whereas
for the abutting case, they will have only two or three distinct
residues.

\begin{figure}[ht]
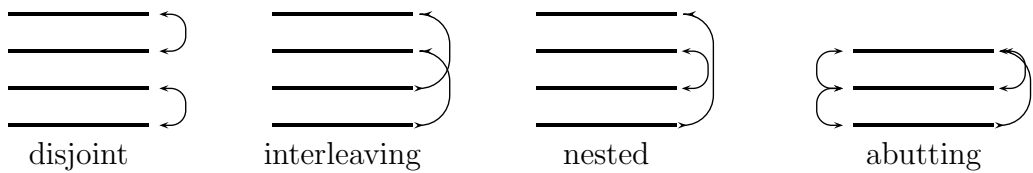

  \begin{center}
    \begin{displaymath}
      \begin{array}{cccc}
        \begin{array}{c}
          \rnode{da}{\rule{4.5em}{1pt}} \\
          \rnode{db}{\rule{4.5em}{1pt}} \\
          \rnode{dc}{\rule{4.5em}{1pt}} \\
          \rnode{dd}{\rule{4.5em}{1pt}} \\
          \mbox{disjoint}
        \end{array} \hspace{2em} &
        \begin{array}{c}
          \rnode{oa}{\rule{4.5em}{1pt}} \\
          \rnode{ob}{\rule{4.5em}{1pt}} \\
          \rnode{oc}{\rule{4.5em}{1pt}} \\
          \rnode{od}{\rule{4.5em}{1pt}} \\
          \mbox{interleaving}
        \end{array} \hspace{2em} &
        \begin{array}{c}
          \rnode{na}{\rule{4.5em}{1pt}} \\
          \rnode{nb}{\rule{4.5em}{1pt}} \\
          \rnode{nc}{\rule{4.5em}{1pt}} \\
          \rnode{nd}{\rule{4.5em}{1pt}} \\
          \mbox{nested}
        \end{array} \hspace{2em} & \hspace{2em} 
        \begin{array}{c}
          \\
          \rnode{ba}{\rule{4.5em}{1pt}} \\
          \rnode{bb}{\rule{4.5em}{1pt}} \\
          \rnode{bc}{\rule{4.5em}{1pt}} \\
          \mbox{abutting}
        \end{array}
      \end{array}
      \psset{nodesep=4pt,linewidth=.1ex}
      \ncdiag[angle=0,linearc=.2]{<->}{da}{db}
      \ncdiag[angle=0,linearc=.2]{<->}{dc}{dd}
      \ncdiag[angle=0,linearc=.4]{<->}{oa}{oc}
      \ncdiag[angle=0,linearc=.4]{<->}{ob}{od}
      \ncdiag[angle=0,linearc=.4]{<->}{na}{nd}
      \ncdiag[angle=0,linearc=.2,nodesep=2pt]{<->}{nb}{nc}
      \ncdiag[angle=0,linearc=.4]{<->}{ba}{bc}
      \ncdiag[angle=0,linearc=.2,nodesep=2pt]{<->}{ba}{bb}
      \ncdiag[angle=180,linearc=.2]{<->}{ba}{bb}
      \ncdiag[angle=180,linearc=.2]{<->}{bb}{bc}
    \end{displaymath}
    \caption{\label{fig:abaci}Possible rod exchanges for a length two interval.}
  \end{center}
\end{figure}

The easiest case to consider is a \emph{disjoint exchange}.  Here we
assume all of the residues of the rods to be exchanged are distinct,
lest we actually have an abutting exchange. Further, we can assume the
exchanging rods have contents $a < b < c < d$ with $n > d - a > 0$
since the rods are as close together as possible. The two exchanges in
this case clearly commute, and taking either first raises the rank by
exactly one by Corollary~\ref{cor:iribbons}. In the strong tableau,
such an $i$-interval swap will happen precisely when the cells of the
$i$-ribbons and $(i+1)$-ribbons have no residues in common, and the
effect of the swap will be to exchange all $i$'s for $i+1$'s and
conversely.

The case of an \emph{interleaving exchange} is only slightly more
interesting, though the conclusion of this case is
noteworthy. Labeling the residues of the exchanging rods $a < b < c <
d$ from top to bottom, again we assume all four residues to be
distinct lest we be pulled into the abutting case.  The assumption
that these two exchanges each increase the rank in the poset forces
rod $a$ longer than rod $c$ and similarly rod $b$ longer than rod $d$
by Proposition~\ref{p:rod.cover}.  Suppose $\mu^{i-1} \subset
t_{a,c}\mu^{i-1}=\mu^{i} \subset t_{b,d}t_{a,c} \mu^{i-1}=\mu^{i+1}$;
the other case is similarly resolved. By
Proposition~\ref{p:rod.cover}, this means the length of rod $b$ does
not lie between the lengths of rods $a$ and $c$ and that the length of
rod $a$ does not lie between the lengths of rods $b$ and $d$. Recall,
we chose a picture for the abacus so that the length of rod $b$ is
larger than the length of rod $d$ and the length of rod $a$ is longer
than the length of rod $c$.  These statements together imply that the
lengths of rods $a$ and $c$ do not interleave the lengths of rods $b$
and $d$, and so the transpositions taken in the other order each raise
the rank by exactly one, thus $\mu^{i-1} \subset t_{b,d}\mu^{i-1} \subset
t_{a,c}t_{b,d} \mu^{i-1}=\mu^{i+1}$ is a valid strong tableau.  In this
new strong tableau, the contents of the $i$-ribbons and $i+1$-ribbons
will not overlap, though the residues will. It is also important to
note that the $i$-ribbons and $i+1$-ribbons will not have the same
residues for their heads or tails. In this case, the $i$-interval swap
again simply exchanges all $i$'s for $i+1$'s and conversely. We
summarize the key observation in this case as follows.

\begin{proposition}\label{prop:nested}
  An $i$-ribbon and an $i+1$-ribbon in a strong tableau have
overlapping contents if and only if the contents of one ribbon are
strictly contained in the contents of the other.  Furthermore, the
contents of the head and tail of the longer ribbon do not occur among
the contents of the shorter ribbon.  \label{prop:overlap}
\end{proposition}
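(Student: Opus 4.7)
The plan is to prove the proposition by a case-by-case analysis of the rank-two interval $[\mu^{i-1},\mu^{i+1}]$ in the $n$-core poset, following the four configurations already introduced in Figure~\ref{fig:abaci} and the paragraphs immediately preceding the proposition. The ``if'' direction of the biconditional is immediate since ribbons are nonempty, so the real content is the ``only if'' direction together with the assertion about the head and tail. By Corollary~\ref{cor:iribbons}, the contents of any single $i$-ribbon form a contiguous interval of fewer than $n$ integers of the form $[r+jn,s-1+jn]$, where the exchange $t_{r,s}$ produces one such ribbon at each ``level'' $j$ in a window determined by the rod lengths in $\mu^{i-1}$.

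For the disjoint and abutting configurations, the two pairs of exchanged rods lie on disjoint (respectively adjacent) residue windows, and a direct computation rules out overlap of content intervals. The interleaving configuration is handled by the analysis already presented before the proposition: the cover conditions of Proposition~\ref{p:rod.cover} force the windows of allowed levels $j$ and $j'$ for the two covers to be disjoint, so no content overlap can occur even though residues do overlap. The heart of the proof is therefore the nested case. Labeling the four exchanged rods as $a<b<c<d$ with outer exchange $t_{a,d}$ and inner exchange $t_{b,c}$, I would enumerate via Proposition~\ref{p:rod.cover} the finitely many length orderings $(\ell_a,\ell_b,\ell_c,\ell_d)$ that yield a valid rank-two interval, and then check which orderings permit an $i$-ribbon and an $(i+1)$-ribbon at a common level $j=j'$, which is the only way overlap can occur since each interval has length less than $n$. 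Exactly one such ordering produces overlap, and there the inner content interval $[b+jn,c-1+jn]$ is strictly contained in the outer $[a+jn,d-1+jn]$. The ``furthermore'' claim follows immediately: the outer ribbon's head at $d-1+jn$ and tail at $a+jn$ lie strictly outside the inner interval, since $a<b$ and $c-1<d-1$. The same argument handles the nested case with the two covers performed in the reverse order, as the roles of $i$ and $i+1$ are simply exchanged.

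The main obstacle I anticipate is the bookkeeping across all of the sub-configurations, particularly in the abutting case, where a single rod participates in both exchanges and its length in $\mu^i$ is not simply inherited from $\mu^{i-1}$. A secondary subtlety is the shifted length convention for the rod containing content $0$, but this shift appears symmetrically on both sides of the inequalities governing which levels $j$ and $j'$ are allowed, and therefore cancels from the overlap analysis, so it should not affect the conclusion.
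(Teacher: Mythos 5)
Your overall approach — case analysis by the four rod-exchange configurations of Figure~\ref{fig:abaci}, using Proposition~\ref{p:rod.cover} and Corollary~\ref{cor:iribbons} to express each ribbon's contents as a contiguous interval $[r+jn,\,s-1+jn]$ indexed by level $j$ — is exactly the route the paper takes (the proposition in the paper has no formal proof environment; it is justified by the surrounding analysis of the four configurations, including the nested and abutting cases discussed after the statement). Your handling of the disjoint, interleaving, and nested cases is correct, and the ``furthermore'' clause in the nested case is exactly as you describe: with $a<b<c<d$ and $d-a<n$, the inner interval $[b+jn,\,c-1+jn]$ sits strictly inside $[a+jn,\,d-1+jn]$ with both endpoints excluded.

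However, there is a genuine gap in your dismissal of the abutting case. You claim that for abutting exchanges the two pairs of rods lie on ``adjacent residue windows,'' from which no content overlap follows. This is not true. In the abutting configuration the two exchanges \emph{share} a rod, say with transposition sequence $(t_{a,c}\to t_{a,b})$ where $a<b<c$; both the $i$-ribbons and $(i+1)$-ribbons have tails of residue $a$, so their residue windows are $[a,c-1]$ and $[a,b-1]$, one contained in the other, not adjacent. If the residue windows could be taken disjoint or adjacent the argument would be easy, but they cannot. The correct reason no content overlap occurs is precisely the subtlety you flag at the end but do not resolve: the shared rod's length changes from $\mu^{i-1}$ to $\mu^{i}$, so the ranges of levels $j$ at which $i$-ribbons and $(i+1)$-ribbons appear are \emph{disjoint} (for $(t_{a,c}\to t_{a,b})$ with $\ell_a>\ell_c>\ell_b$, the $i$-ribbons occur at levels $\ell_c\le j\le \ell_a-1$ and the $(i+1)$-ribbons at $\ell_b\le j'\le \ell_c-1$). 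Because the two content intervals each have length $<n$ and their levels $j,j'$ never coincide, the intervals $[a+jn,\,c-1+jn]$ and $[a+j'n,\,b-1+j'n]$ cannot intersect. An analogous level-disjointness argument handles the sub-case $(t_{b,c}\to t_{a,b})$ and its variants, where the intervals at equal level merely abut at $b-1+jn$ and $b+jn$ but never overlap. Without this observation, the abutting case, which is exactly where one ribbon can contain an $n$-translate of the other with a shared head-residue or tail-residue, is not actually ruled out by your stated reasoning.
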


More generally, we say two ribbons are \emph{nested} if the second
condition of Proposition~\ref{prop:nested} holds. We also say two
ribbons $R_{1}$ and $R_{2}$ are \emph{independent} if $R_{1} \cup
R_{2}$ has two connected components as a skew shape.

In the case of a \emph{nested exchange}, again label the rod
contents $a < b < c < d$ from top to bottom.  We can assume $d-a<n$ by
Corollary~\ref{cor:iribbons}.  Here the two corresponding
transpositions commute, and each will raise the rank by exactly
one. The interesting feature of this case lies in the strong tableaux.

By Proposition~\ref{p:rod.cover}, neither the length of rod $b$ nor
the length of rod $c$ may lie between the lengths of rods $a$ and $d$.
If both rod $b$ and rod $c$ are longer than rod $a$ or both shorter
than rod $d$ (necessarily rod $a$ is longer than rod $d$), then the
$i$-ribbons and $i+1$-ribbons will have no contents in common, though
the residues of one ribbon will be strictly contained within the
residues of the other.  Furthermore, both the heads and tails of the
$i$-ribbons and $i+1$-ribbons have distinct residues.

On the other hand, if rod $b$ is longer than rod $a$ and rod $c$ is
shorter than rod $d$ (necessarily rod $b$ is longer than rod $c$),
then the content of every instance of the longer ribbon (corresponding
to $t_{a,d}$) overlaps the content of a  shorter ribbon
(corresponding to $t_{b,c}$) and there must be an instance of the
shorter ribbon containing a cell of content $b-1$ which occurs
independently from all of the longer ribbons.  An $i$-interval swap
changes all entries in all of the shorter ribbons that appear
independently of the longer ribbons and all entries of the longer
ribbons that are not on the same content as a shorter ribbon.  For
example, below is the skew strong tableau corresponding to a nested
exchange and the result of the interval swap

\[
 \tableau{  
8 & \\
{}\\
{} & 7 & 8 & \\
{} & 7 & 7 & \\
{} & {} & {} & {} & 8 & \\
}
        \hspace{\cellsize}
   \raisebox{-\cellsize}{$\stackrel{\swap_{7,8}}{\longleftrightarrow}$}
        \hspace{2\cellsize}
 \tableau{
7 & \\
{}\\
{} & 8 & 8 & \\
{} & 7 & 8 & \\
{} & {} & {} & {} & 7 & \\
}
\]
\bigskip

\noindent This discussion proves the following lemma.

\begin{lemma}\label{lem:nested.ribbons}
If an $i$-ribbon and an $i+1$-ribbon are nested, then 
\begin{enumerate}
\item At least two copies of the shorter ribbon occur independently
from the longer ribbon, with at least one on either side of the
consecutive sequence of copies of the longer ribbon.
\item Every copy of the longer ribbon nests a copy of the shorter ribbon.
\item Both the heads and tails of the $i$-ribbons and $i+1$-ribbons have
distinct residues.
\item An $i$-interval swap is possible.
\end{enumerate}
\end{lemma}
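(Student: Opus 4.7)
The plan is to extend the abacus analysis of the paragraphs preceding the lemma. In the nested case the four exchanging rods have contents $a<b<c<d$ with $0<d-a<n$, and the hypothesis that the shorter ribbon sits strictly inside the longer one forces $\ell_b>\ell_a$ together with $\ell_c<\ell_d$, in addition to $\ell_a>\ell_d$ and $\ell_b>\ell_c$, which follow from each of $t_{a,d}$ and $t_{b,c}$ being rank-increasing by Proposition~\ref{p:rod.cover}. Here $\ell_x$ denotes the length of the rod of content $x$ in the abacus for $\mu^{i-1}$.

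I would then pin down where the ribbons actually lie by applying Corollary~\ref{cor:iribbons}. The $\ell_a-\ell_d$ longer ribbons occupy contents $[a+kn,\,d-1+kn]$ for $k=\ell_d,\ell_d+1,\dotsc,\ell_a-1$, while the $\ell_b-\ell_c$ shorter ribbons occupy contents $[b+kn,\,c-1+kn]$ for $k=\ell_c,\dotsc,\ell_b-1$. Since the rod lengths are integers and the strict inequalities above give $\ell_b\ge\ell_a+1$ and $\ell_c\le\ell_d-1$, the index range of the shorter ribbons strictly contains that of the longer ribbons, with at least one extra level below and at least one above.

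Parts (1)--(3) then follow directly from this description. For (1), any shorter ribbon at a level $k<\ell_d$ or $k\ge \ell_a$ lies at contents entirely below or above every longer ribbon, so by $d-a<n$ such a ribbon forms an independent component of the skew shape, and the two extra levels identified above give one such copy on each side of the consecutive longer ribbons. For (2), at each common level $k\in\{\ell_d,\dotsc,\ell_a-1\}$ the inclusion $[b+kn,\,c-1+kn]\subset[a+kn,\,d-1+kn]$ together with $a<b$ and $c<d$ places a shorter ribbon strictly inside the longer one and avoids its head and tail, so Proposition~\ref{prop:overlap} certifies that this pair is nested. For (3), the head and tail residues of the two ribbon shapes are $d-1$ and $a$ versus $c-1$ and $b$ modulo $n$, and $0<d-a<n$ makes $a,b,c,d$ pairwise non-congruent modulo $n$, so all four residues are distinct.

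For (4), because $t_{a,d}$ and $t_{b,c}$ involve four disjoint rods with residues $a,b,c,d$ modulo $n$, they commute as elements of $\aSn$ and neither affects the rod lengths used by the other. Hence applying $t_{b,c}$ first and then $t_{a,d}$ to $\mu^{i-1}$ produces a valid saturated chain through a different $n$-core of rank $i$, so $[\mu^{i-1},\mu^{i+1}]$ is isomorphic to $B_{2}$ and the $i$-interval swap is defined. The main technical obstacle is (1): one must verify that the strictness of the rod-length inequalities forced by nesting really does produce an extra shorter ribbon on each side, not just in total. Once the ribbon positions have been read off the abacus via Corollary~\ref{cor:iribbons}, however, the remaining verification is simple integer bookkeeping.
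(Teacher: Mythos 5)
Your proof is correct and follows the same abacus rod-exchange analysis the paper uses; the paper treats Lemma~\ref{lem:nested.ribbons} as a direct consequence of the case-by-case discussion of rank-two intervals immediately preceding it. You make that derivation more precise by writing out the content intervals $[a+kn,\,d-1+kn]$ for $k\in\{\ell_d,\dotsc,\ell_a-1\}$ and $[b+kn,\,c-1+kn]$ for $k\in\{\ell_c,\dotsc,\ell_b-1\}$ and reading part~(1) off the strict containment of index ranges $[\ell_d,\ell_a-1]\subsetneq[\ell_c,\ell_b-1]$, whereas the paper's prose asserts only a single independent shorter ribbon and leaves the ``one on each side'' claim implicit.
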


The final case of an \emph{abutting exchange} will involve exactly
three distinct indices on the transpositions, though possibly only two
distinct residues. Label the contents of the rods $a < b < c$ from top
to bottom.  Suppose that the three residues are all distinct.  This is
necessarily the case for the right hand side of
Figure~\ref{fig:abaci}.  Say the two exchanges correspond with the
transposition sequence $(t_{a,c} \rightarrow t_{a,b})$, then by
Proposition~\ref{p:rod.cover}, we know rod $a$ is strictly longer than
rod $c$ which is strictly longer than rod $b$, and taking the inner
exchange first forces rod $a$ longer than rod $b$ longer than rod
$c$. Therefore we note that $t_{b,c} t_{a,b} = t_{a,b} t_{a,c}$ so
this equation along with the total order on the lengths of the rods
ensures that an interval swap is possible.  The new transposition
sequence after applying this interval swap would be $(t_{a,b}
\rightarrow t_{b,c})$ which corresponds with the left hand side of the
abutting exchange pictured in Figure~\ref{fig:abaci}.  If the two
exchanges correspond with the transposition sequence $(t_{a,b}
\rightarrow t_{a,c})$, examining the required rod length inequalities
again we see that $(t_{b,c} \rightarrow t_{a,b})$ is a valid
transposition sequence on the same rank 2 interval.  This again
corresponds with the left hand side of the abutting exchange pictured
in Figure~\ref{fig:abaci}.  If the right hand side of
Figure~\ref{fig:abaci} is turned upside down, a similar analysis
holds.  Furthermore, the interval swaps form an involution on the two
chains in any interval isomorphic to $B_{2}$ so we have covered all
possible cases of an abutting exchange in the form of the left hand
side of the abutting picture of Figure~\ref{fig:abaci} as well.  Hence
in all cases of an abutting exchange with three distinct residues,
there exists an interval swap determined above.

Assuming that $[\mu^{i-1},\mu^{i+1}]$ is isomorphic to $B_{2}$, one
way to recognize if an abutting exchange is required for
$\swap_{i,i+1}(S)$ is that an $i$-ribbon and an $i+1$-ribbon together
form a ribbon shape.  In this case, we will say these two ribbons
\emph{abut} each other.  From the transpositions pictured in the
abutting case of Figure~\ref{fig:abaci} and
Corollary~\ref{cor:iribbons}, we observe that the sum of the lengths
of an $i$-ribbon and an $i+1$-ribbon is necessarily less than $n$ and
exactly one of the two ribbon types occurs without abutting a copy of
the other.  In this instance, the $i$-interval swap will change all
entries of the non-abutting ribbons and all entries in their
$n$-translates.  For example, the 2-ribbon abuts a 3-ribbon in the
strong tableau on the left in \eqref{e:swap.2.3}.

The other way to recognize if an abutting exchange is required for
$\swap_{i,i+1}(S)$ is that, among the $i,i+1$-ribbons, one ribbon is
strictly longer than the other and the longer ribbon contains an
$n$-translate of the shorter and the heads or tails of the two ribbons
have the same residue depending on if the shared rod is $a$ or
$c$. See for example, the 2-ribbon and 3-ribbon in the strong tableau
on the right in \eqref{e:swap.2.3}.  Here an interval swap will change
all entries of the shorter ribbons and all entries of the longer
ribbons that are not part of an $n$-translate of the shorter.  This
case is also recovered from the left hand side of
Figure~\ref{fig:abaci} when the lengths of the three rods are all
distinct; we omit details as the case is completely parallel.

Following the details of the abutting exchange case carefully, we have
the following.

\begin{proposition}
  Suppose $\swap_{i,i+1}(S)$ is obtained from $S$ by an abutting
exchange.  Assume the corresponding transpositions are indexed by 3
distinct residues mod $n$.  Then either

\begin{itemize}
\item No $i$-ribbon abuts any $i+1$-ribbon, but one of these ribbons
  strictly contains an $n$-translate of the other with a shared head
  or tail occurring on a consecutive residue.
  \\
\item OR, all instances of one ribbon type abut the other while the
  other will also have at least one components which is non-abutting
  and the sum of the length of an $i$-ribbon and an $i+1$-ribbon is at
  most $n-1$. In this case, if an $i+1$-ribbon abuts an $i$-ribbon
  from the north, then the non-abutting ribbons lie always southeast of
  the abutting ribbons, and if an $i$-ribbon abuts an $i+1$-ribbon
  from the west, then the non-abutting ribbons lie always northwest of
  the abutting ribbons.
\end{itemize}
  \label{prop:stack}
\end{proposition}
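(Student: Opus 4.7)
The plan is to extend the case analysis of abutting exchanges already begun in the paragraphs preceding the proposition, working throughout on the abacus via Proposition~\ref{p:rod.cover} (which identifies each cover with the exchange of two rods of strictly compared lengths, subject to no intermediate rod having length in between) and Corollary~\ref{cor:iribbons} (which reads off the length, multiplicity, and head contents of the resulting ribbons).

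First I would sort the valid length-two transposition sequences for an abutting exchange with three distinct residues into two types. Writing the involved rod contents as $a<b<c$ within a single window of width less than $n$, the pairs that share a common rod residue split as a \emph{long--short} type, namely $(t_{a,c},t_{a,b})$, $(t_{a,b},t_{a,c})$, $(t_{a,c},t_{b,c})$ and $(t_{b,c},t_{a,c})$, and a \emph{short--short} type, namely $(t_{a,b},t_{b,c})$ and $(t_{b,c},t_{a,b})$. The identities $t_{b,c}t_{a,b}=t_{a,b}t_{a,c}$ and $t_{a,b}t_{b,c}=t_{b,c}t_{a,c}$ together with the strict rod-length inequalities required for each covering relation confirm that these are the only possibilities and that $\swap_{i,i+1}$ toggles between the two types.

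Next I would match the two types to the two bullets of the proposition. For the long--short type, Corollary~\ref{cor:iribbons} produces a long ribbon of length $c-a$ and a short ribbon of length $b-a$ or $c-b$ whose tail residue (if the shared rod is $a$) or head residue (if the shared rod is $c$) matches the corresponding extremal residue of the long ribbon. Comparing bead positions across the pivot rod shows that each short-ribbon copy sharing this residue is $n$-displaced from every long-ribbon copy, so its cells, after an $n$-translation, fit strictly inside a long copy; the remaining extremal residue (namely $b-1$ or $b$) is consecutive to the one shared by the other ribbon, yielding bullet one. For the short--short type the two ribbon lengths sum to $(b-a)+(c-b)=c-a<n$, giving the length bound in bullet two; each $t_{a,b}$-copy and each $t_{b,c}$-copy whose cells meet at the shared rod $b$ fuse into a strict ribbon of length $c-a$, and comparing the multiplicities $\ell_a-\ell_b$ and $\ell_b-\ell_c$ of the two types shows that the more numerous ribbon type has at least one non-abutting copy.

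Finally I would derive the direction constraints from the relative position of the shared rod in the abacus. When the sequence is $(t_{a,b},t_{b,c})$ the $i$-ribbon lies at contents $[a,b-1]$ and the $(i+1)$-ribbon at contents $[b,c-1]$, so the $i$-ribbon abuts the $(i+1)$-ribbon from the west and the extra copies, which are forced by the strict rod-length inequalities to lie on $n$-translates northwest of the shared-rod position, sit to the northwest of the fused pair; the reversed order $(t_{b,c},t_{a,b})$ yields abutment from the north with extra copies to the southeast. The main obstacle will be the long--short analysis in Step~2: verifying that the short ribbon lies \emph{strictly} inside the long one as an $n$-translate, rather than partially overlapping or abutting it, requires careful use of Proposition~\ref{p:rod.cover}'s constraint that no intermediate rod has length strictly between the two exchanged rods, and then ruling out stray intermediate appearances of either ribbon type that might create additional abutments.
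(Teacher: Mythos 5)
Your proposal follows essentially the same route the paper takes: the paper "proves" this proposition simply by pointing back to the case analysis of abutting rod exchanges carried out in the paragraphs immediately preceding it, and your plan is a somewhat more systematic reorganization of exactly that analysis (enumerate the three-rod transposition sequences, split into the two types that share a rod versus the pivot type, read off ribbon shapes and positions from Proposition~\ref{p:rod.cover} and Corollary~\ref{cor:iribbons}).

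Two small but genuine slips to fix. First, the multiplicity count $\ell_b-\ell_c$ you attribute to the second ribbon type in the short--short case $(t_{a,b},t_{b,c})$ is what $t_{b,c}$ would produce on the \emph{original} abacus, but the second exchange is applied after $t_{a,b}$ has already lengthened rod $b$ to $\ell_a$; the correct counts are $\ell_a-\ell_b$ for the $i$-ribbons and $\ell_a-\ell_c$ for the $(i+1)$-ribbons (and then $\ell_b-\ell_c$ is the number of non-abutting $(i+1)$-ribbons, which sit at the lower abacus levels). Second, your conclusion that some ribbon is non-abutting tacitly uses that $\ell_b-\ell_c>0$ strictly. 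This is true but needs to be tied to the hypothesis that $\swap_{i,i+1}(S)$ is defined, i.e.\ that $[\mu^{i-1},\mu^{i+1}]\cong B_2$; by the analysis culminating in Proposition~\ref{prop:chains} (or Table~\ref{table:length.two}), a short--short abutting exchange with $\#\mathrm{dis}=2$ (two rods of equal length) gives a chain, not $B_2$, so the $B_2$ assumption is what forces all three rod lengths to be distinct. Once these are patched, the direction claims you derive (extra copies at lower abacus levels, hence northwest, for the $(t_{a,b},t_{b,c})$ ordering, and at higher levels, hence southeast, for $(t_{b,c},t_{a,b})$; abutment from the west versus from the north determined by which ribbon is added second) match the proposition and the paper's discussion.
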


Finally, consider an abutting exchange as in the left hand side of the
abutting case in Figure~\ref{fig:abaci}.  If the three rod lengths are
distinct and the three residues are distinct, then the exchange is
covered by Prop~\ref{prop:stack}.  In each of the remaining cases, we
claim the interval $[\mu^{i-1},\mu^{i+1}]$ is a chain so an
$i$-interval swap is not possible.

\begin{proposition}
  Let $S = (\emptyset = \mu^0 \subset \mu^1 \subset \cdots \subset
\mu^{m})$ be a saturated chain in the $n$-core poset.  Then the
interval between $\mu^{i-1}$ and $\mu^{i+1}$ in the $n$-core poset is
a chain if and only if each $i$-ribbon abuts an $i+1$-ribbon and each
$i+1$-ribbon abuts an $i$-ribbon.  Moreover, the length of an
$i$-ribbon plus the length of a $i+1$-ribbon is less than or equal to
$n$, with equality if and only if $\mu^{i+1}/\mu^{i-1}$ is a single connected
ribbon shaped component starting and ending with $i+1$-ribbons.
\label{prop:chains}
\end{proposition}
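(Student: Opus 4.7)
The plan is to apply the case analysis of length-two intervals developed earlier in this subsection via Figure~\ref{fig:abaci}. By the Bj\"orner--Brenti theorem, every length-two interval in the $n$-core poset is either a chain or isomorphic to the Boolean lattice $B_{2}$, and $B_{2}$ occurs precisely when an $i$-interval swap is available. Such a swap arises in one of four rod-exchange configurations: disjoint, interleaving, nested, or abutting.

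For the biconditional I would argue contrapositively in both directions. If some $i$-ribbon fails to abut any $i+1$-ribbon (or vice versa), I exhibit an $i$-interval swap in each of the four configurations. The disjoint case is immediate. For interleaving and nested, Proposition~\ref{prop:overlap} and Lemma~\ref{lem:nested.ribbons}(1) supply a ribbon of one type sitting independently of every ribbon of the opposite type, after which the transpositions can be reordered to yield a swap. For the abutting case with three distinct residues, Proposition~\ref{prop:stack} provides the required non-abutting ribbon. Conversely, if the interval is not a chain, the same analysis places us in one of the four configurations and in each case exhibits a non-abutting ribbon. The only remaining configuration is the abutting case with exactly two distinct residues, corresponding to rod indices $a<b<c$ satisfying $c-a=n$.

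In that remaining configuration, Corollary~\ref{cor:iribbons} arranges the $i$-ribbons and $i+1$-ribbons as $n$-translated copies at contents $[a+jn,b-1+jn]$ and $[b+jn,c-1+jn]$ respectively. Direct content arithmetic gives the abutment pattern: the SE edge of $i$-copy $j$ meets the NW edge of $i+1$-copy $j$, and the SE edge of $i+1$-copy $j$ meets the NW edge of $i$-copy $j+1$. Combined with the chain constraint (which rules out the alternative sequence $(t_{b,c}\to t_{a,b})$ via Proposition~\ref{p:rod.cover}(2)) this pins down the relative numbers of $i$- and $i+1$-copies and forces the abutting condition in both directions. The length bound $(b-a)+(c-b)=c-a=n$ is then immediate.

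For the equality clause, I would use Corollary~\ref{cor:iribbons} to describe $\mu^{i+1}/\mu^{i-1}$ as a collection of $n$-translated length-$n$ compound ribbons, and verify that the total shape is a single connected ribbon precisely when only one such translate is present and the $i+1$-ribbons appear at the NW and SE extremes of the compound. The main obstacle I anticipate is finishing the head-and-tail identification: one must carefully unfold the abacus picture of Figure~\ref{fig:abaci} (the left abutting configuration) to confirm that the compound ribbon of length $n$ both begins and ends inside $i+1$-blocks, which requires watching precisely which of the two consecutive transpositions contributes the extreme cells of the compound ribbon and how the ribbon bends inside the $n$-core staircase.
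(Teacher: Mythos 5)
Your plan misses one of the two chain configurations, and this gap propagates directly into the length bound. You assert that after excluding the $B_2$ configurations (disjoint, interleaving, nested, and abutting with three distinct residues and three distinct rod lengths), ``the only remaining configuration is the abutting case with exactly two distinct residues, corresponding to rod indices $a<b<c$ satisfying $c-a=n$.'' But the $n$-core interval $[\mu^{i-1},\mu^{i+1}]$ can also be a chain in the abutting configuration with \emph{three} distinct residues, namely when rods $a<b<c$ have only two distinct lengths so that the total order forced on them by Proposition~\ref{p:rod.cover} admits a unique saturated chain (this is the second row of Table~\ref{table:length.two}). In that case $c-a<n$, $t_{a,b}t_{b,c}=t_{a,c}$, and $\mu^{i+1}/\mu^{i-1}$ is a disjoint union of $n$-translates of a single ribbon of length $c-a$, each consisting of one $i$-ribbon abutting one $i+1$-ribbon. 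There is no non-abutting ribbon here, so Proposition~\ref{prop:stack} is silent (it presupposes a $B_2$ interval), and this case does not follow from your contrapositive analysis.

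Because of this omission, your computation of the length bound gives only $(b-a)+(c-b)=c-a=n$, which proves the equality case but cannot deliver the general inequality ``$\leq n$'' or distinguish it from the strict case. The paper's proof explicitly splits the forward (chain) direction into the two subcases $a\not\equiv c$ and $a\equiv c$ modulo $n$: the first gives the strict inequality and a skew shape that is a union of disconnected identical ribbons, the second gives equality and a single connected ribbon alternating $i$'s and $i+1$'s beginning and ending with an $i+1$-ribbon. You need to add the $a\not\equiv c$ chain subcase to your plan before the equality characterization can be completed. The rest of your outline (the $B_2$ cases, the use of Lemma~\ref{lem:nested.ribbons} and Proposition~\ref{prop:stack}, the head-and-tail identification in the $c-a=n$ case) tracks the paper's argument closely and is otherwise sound.
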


\begin{proof}
Assume $[\mu^{i-1}, \mu^{i+1}]$ is a chain.  Then by the previous
analysis of rod exchange cases the chain corresponds with a
transposition sequence of the form $t_{b,c} \rightarrow t_{a,b} $ with
$a<b<c$ or $a>b>c$.  Assume $a$ and $c$ have different residues (both
necessarily have distinct residues from $b$).  In this case, $t_{a,b}
t_{b,c} = t_{a,c}$ and by Proposition~\ref{p:rod.cover} we can assume
$0<c-a<n$.  Thus the skew shape $\mu^{i+1}/\mu^{i-1}$ is the union of
a positive number of $n$-translates of a single ribbon of length less
than $n$ and none of these ribbons overlap in content.  More
precisely, $i$-ribbons and $i+1$-ribbons always occur in pairs and the
sum of their lengths is strictly less than $n$.

If, on the other hand, $a$ and $c$ have the same residue, then we can
assume $c = a + n$ by choosing to label the exchanging rods as close
together as possible. Hence, the length of the ribbons corresponding
to $t_{a,b}$ and those corresponding to $t_{b,a+n}$ necessarily add to
$n$ so $\mu^{i+1}/\mu^{i-1}$ is a single connected ribbon shaped
component.  Furthermore, recall that rod $c=a+n$ is one shorter than
the length of rod $a$ by Remark~\ref{rmk:rotate}.  If rod $b$ is
shorter than rod $a$, then the chain corresponds with the
transposition sequence $t_{a,b} \rightarrow t_{b,c}$, otherwise the
transpositions happen in the reverse order.  In either case, by
considering how ribbons are created using the abacus model and
Proposition~\ref{p:rod.cover} we observe that the ribbon
$\mu^{i+1}/\mu^{i-1}$ is tiled by an alternating sequence of
$i$-ribbons and $i+1$-ribbons and it begins and ends with an
$i+1$-ribbon.  

To prove the reverse direction, assume each $i$-ribbon abuts an
$i+1$-ribbon and conversely.  Then by Corollary~\ref{cor:iribbons} we
can infer that the chain $\mu^{i-1} \subset \mu^{i} \subset \mu^{i+1}$
corresponds to an abutting exchange.  If all three contents of the
exchanging rods have distinct residues, then either $[\mu^{i-1},
\mu^{i+1}]$ is a chain or we would find a contradiction to the second
case of Proposition~\ref{prop:stack}.

If there are only two distinct indices among the exchanging rods then
the relative lengths of these rods determine the only possible
exchange sequence taking $\mu^{i-1}$ to $\mu^{i+1}$ by
Proposition~\ref{p:rod.cover}.  Thus, $[\mu^{i-1}, \mu^{i+1}]$ is
again a chain.   
\end{proof}

\begin{corollary}\label{cor:ribbons}
If a strong tableau $S = (\mu^0 \subset \mu^1 \subset \mu^{2})$ is the
result of an abutting exchange, then $\mu^{2}/\mu^{0}$ is the union of
ribbons with nonoverlapping content.  If every ribbon in 
$\mu^{2}/\mu^{0}$ is an identical $n$-translate of the first, then the interval $[\mu^{0},\mu^{2}]$ is a chain.
\end{corollary}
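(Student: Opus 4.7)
For the first assertion the plan is to invoke Proposition~\ref{prop:stack} directly. The discussion surrounding Figure~\ref{fig:abaci} has already shown that any abutting exchange which actually produces a distinct saturated chain must involve three distinct residues among the exchanging rods, since the two-residue configurations all collapse the interval $[\mu^0,\mu^2]$ to a chain. So Proposition~\ref{prop:stack} applies and exhausts the relevant cases. In case (i) the longer ribbon strictly contains an $n$-translate of the shorter with a shared head or tail; since every ribbon in an $n$-core has length at most $n-1$, the \emph{actual} shorter ribbon (sitting at contents shifted by $\pm n$ from that translate) has its content interval entirely on one side of the longer ribbon's, so the two are disjoint. In case (ii) each abutting pair consisting of an $i$-ribbon and an $(i+1)$-ribbon meets along a single edge of a cell, so their content intervals touch but do not overlap; the non-abutting ribbons lie strictly southeast or strictly northwest of the abutting block by the proposition, and the length bound that the sum of an $i$-ribbon and an $(i+1)$-ribbon is at most $n-1$ keeps them from colliding in content with any abutting ribbon. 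Finally, any two ribbons of the same color are $n$-translates with content intervals differing by nonzero multiples of $n$, which (lengths being less than $n$) makes them disjoint as well. Combining these three observations yields the first assertion.

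For the second assertion I plan to use a rigidity property for affine transpositions. Two transpositions $t_{r,s}$ and $t_{r',s'}$ produce ribbons of identical shape (same length and same residue set) precisely when $r\equiv r'$ and $s\equiv s'\pmod{n}$ with $s-r=s'-r'$, which forces $r'=r+pn$ and $s'=s+pn$ for a common integer $p$; a short computation from the definition of affine transpositions then shows $t_{r',s'}=t_{r,s}$ as elements of $\aSn$. Consequently, if every ribbon of $\mu^2/\mu^0$ is an identical $n$-translate of the first, then a length-two saturated chain $\mu^0\subset\mu^1\subset\mu^2$ would have its two covering transpositions coincide, and applying this involution twice returns $\mu^0$, contradicting $\mu^0\neq\mu^2$. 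Hence no length-two saturated chain realizes the given ribbon structure, and the only remaining way to have $\mu^2/\mu^0$ of this form is as a single covering relation in the $n$-core poset, making $[\mu^0,\mu^2]=\{\mu^0,\mu^2\}$ a two-element chain.

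The main obstacle will be the content bookkeeping in case (ii) of the first assertion, where one must verify that the non-abutting $n$-translates avoid the content intervals of every abutting ribbon even in the extremal situation where the ribbon-length sum saturates at $n-1$; this is precisely where the $n$-core length bound on ribbons is being used. The second assertion is much more compact once the rigidity statement for identical-shape transpositions is in hand, as only a brief contradiction argument remains.
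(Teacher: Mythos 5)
Your handling of the first assertion is close but incomplete: you dismiss the chain cases with the remark that two-residue configurations collapse to a chain, and then assert Proposition~\ref{prop:stack} ``exhausts the relevant cases.'' But the corollary's first claim must hold for every abutting exchange, including the chain cases (both the two-residue configuration and the three-residue configuration with only two distinct rod lengths; see Table~\ref{table:length.two}), and Proposition~\ref{prop:stack} applies only when $\swap_{i,i+1}(S)$ exists, i.e.\ when $[\mu^0,\mu^2]$ is isomorphic to $B_2$. The paper handles these by also invoking Proposition~\ref{prop:chains}, whose proof shows that for the chain cases $\mu^2/\mu^0$ is either a single connected ribbon or a disjoint union of $n$-translates of one ribbon, so the first assertion holds there as well; your proof needs that supplement.

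The argument for the second assertion has a genuine error. You apply the rigidity statement (two affine transpositions producing ribbons of identical shape and residue set coincide in $\aSn$) to conclude the two covering transpositions $\mu^0\lessdot\mu^1$ and $\mu^1\lessdot\mu^2$ must coincide. But ``every ribbon in $\mu^2/\mu^0$'' refers to the connected components of the skew shape, not to the $1$-ribbons and $2$-ribbons of the strong tableau. In the chain case with three distinct residues, Proposition~\ref{prop:chains} shows each connected component is an identical ribbon formed by a $1$-ribbon abutting a $2$-ribbon; those two marking ribbons have different shapes and different underlying transpositions even though the components are all identical $n$-translates. So the hypothesis does not force the two covering transpositions to coincide, and the contradiction you extract is illusory. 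Moreover the intended conclusion is that $[\mu^0,\mu^2]$ is a three-element chain $\{\mu^0,\mu^1,\mu^2\}$ rather than the four-element poset $B_2$; your final sentence asserts $[\mu^0,\mu^2]=\{\mu^0,\mu^2\}$, which is impossible since $\mu^1$ lies strictly between them, and your overall strategy of ruling out the length-two saturated chain contradicts the corollary's hypothesis that $S=(\mu^0\subset\mu^1\subset\mu^2)$ is given. The paper instead obtains the second assertion by contrapositive: if $[\mu^0,\mu^2]\cong B_2$ then Proposition~\ref{prop:stack} shows the components of $\mu^2/\mu^0$ come in two distinct shapes (a longer ribbon and a shorter ribbon, or abutting pairs and non-abutting singles), so they cannot all be identical $n$-translates.
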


\begin{proof}
This follows from the characterization of all abutting exchanges in
this subsection, Proposition~\ref{prop:stack} and
Proposition~\ref{prop:chains}.
\end{proof}

Table~\ref{table:length.two}  summarizes the discussion above characterizing all
possible length two intervals determined by two consecutive rod
exchanges.  Assume the initial $n$-core is $\mu$.  First apply
$t_{a,b}$ then $t_{c,d}$ assuming $0<b-a<n$, $0<d-c<n$, rod $a$ longer
than rod $b$, rod $c$ longer than rod $d$, and all 4 indices appear in
the smallest possible interval of $\mathbb{Z}$ which satisfies these
conditions.  Let $\#res$ be the number of distinct residues
among $a,b,c,d$ mod $n$.  Let $\#dis$ be the number of
distinct rod lengths among rods $a$, $b$, $c$, $d$ in $\mu$.  The
interval $[\mu , t_{cd} t_{ab} \mu ]$ is either isomorphic to $B_{2}$
or the chain $C_{3}$ with 3 elements.  The two interval types are distinguished
by considering  $\#res$ and $\#dis$ or equivalently by considering the skew shape as partitions of  $t_{cd} t_{ab} \mu/ \mu $.  

\begin{table}
\begin{small}
\begin{tabular}{|c|c|c|c|l|}
\hline
$\#res$ & $\#dis$ &   Type & Exchange  & Skew Shape\\
\hline
2 &	2  &	$C_{3}$  & abutting &	One long ribbon alternating $i$'s and $i+1$'s,\\
&&&&                                    starting and ending with $i+1$.\\ [2ex]
\hline
3 &	2     & $C_{3}$  & abutting &   Every component is an identical ribbon \\
&	&	&	&	        composed of one $i$-ribbon abutting one $i+1$ ribbon. \\ [2ex]
\hline
3 &	3     & $B_{2}$  & abutting &	Long ribbons contain $n$-translate of shorter ribbons. \\
&	&	&	&	        One short ribbon occurs independently.\\ [2ex]
\hline
4 &	2,3,4 &  $B_{2}$ & disjoint  &  All ribbons are non-overlapping, \\
&	&	&	&	        $i$-ribbons don't abut $i+1$-ribbons or vice versa. \\
&	&	&	&	        No $n$-translate of one ribbon type overlaps the other.\\ [2ex]
\hline
4 &	4    &$B_{2}$ & interleaving &  All ribbons are non-overlapping,  \\
&	&	&	&	        $i$-ribbons don't abut $i+1$-ribbons or vice versa.  \\
&	&	&	&	        Some $n$-translate of an $i$-ribbon overlaps an  \\
&	&	&	&	        $i+1$-ribbon with distinct heads and tails.\\ [2ex]
\hline
4 & 4         & $B_{2}$  & nested    & Either all ribbons are non-overlapping, or \\
&	&	&	&	       all longer ribbons overlap shorter ribbons and\\
&	&	&	&	       at least one short ribbon occurs independently  \\
&	&	&	&	      NW (SE) of each long ribbon ribbons.  These two   \\
&	&	&	&	      cases distinguished by comparing rod lengths.\\
\hline
\end{tabular}
\end{small}
\caption{Summary of length two interval types.}
\label{table:length.two}
\end{table}

\section{Affine dual equivalence}
\label{sec:equivalence}

We now have all the ingredients to construct an analog of dual
equivalence for starred strong tableaux, which we call \emph{affine
dual equivalence}. Though our equivalence relation will not share all
of the properties of dual equivalence on tableaux, we will go on in
Section~\ref{sec:graph} to construct a signed colored graph from
our elementary equivalence relations that we show to be a \D graph.   

While the elementary equivalence relations will have a somewhat
complicated description, there are essentially only two cases: one
that precisely mirrors dual equivalence, and another that is a close
approximation when the former is not applicable. Remarkably, the
relations also preserve the spin statistic on starred strong tableaux.

\subsection{Elementary equivalences}
\label{sec:graph-sst}

In this subsection, we describe a family of involutions $\vp_{i}$ on
all starred strong tableaux of a given shape that will define the
\emph{elementary affine dual equivalence} on $i-1,i,i+1$. Recall
that a starred strong tableau $S^{*}$ of shape $\lambda$ can be
represented by a strong tableau $S = ( \emptyset \subset \lambda^{(1)}
\subset \lambda^{(2)} \subset \cdots \subset \lambda^{(m)})$ with
$\lambda^{(m)}=\lambda$ and a vector $c^{*}=(c_{1},c_{2},\dotsc,
c_{m})$ where $c_{i}$ is the content of the cell of $S^{*}$ containing
$i^{*}$.  In this case, we will say the \emph{rank} of $S^{*}$ is
$m$.

\begin{definition}\label{def:witness}
  Let $S^*= (S,c^{*})$ be a starred strong tableau of rank $m$.  Fix
  $1<i<m$.  Consider the locations of $(i-1)^{*}, i^{*},(i+1)^{*}$ in
  $S^{*}$.  The $i$-\emph{witness}, or simply the \emph{witness}
  when $i$ is fixed, is chosen among $\{i-1,i,i+1 \}$ as follows.

  \begin{enumerate}
  \item If $c_{i-1} \neq c_{i+1}$, then $c_{i-1},c_{i},c_{i+1}$ are
    all distinct since $p$-ribbons and $p+1$-ribbons cannot have 
    head or tails of the same content by the analysis in Section~\ref{sec:poset-intervals}. 
    In this case, the \emph{witness} is the index of
    the median of the set $\{c_{i-1},c_{i},c_{i+1} \}$.
  \item If $c_{i-1}=c_{i+1}$, then we have three cases to consider.
    \begin{enumerate}
    \item If the $(i-1)$-ribbons and $(i+1)$-ribbons have the same
      length, then $i+1$ is the \emph{witness}.
    \item If the $(i-1)$-ribbons and $(i+1)$-ribbons have different lengths
      and $c_{i-1}>c_{i}$, then the \emph{witness} is the letter
      indexing the longer ribbons among the $(i-1)$-ribbons and the
      $(i+1)$-ribbons.
    \item If the $(i-1)$-ribbons and $(i+1)$-ribbons have different
      lengths and $c_{i-1}<c_{i}$, then the \emph{witness} is the
      letter indexing the shorter ribbons among the $(i-1)$-ribbons
      and the $(i+1)$-ribbons.
    \end{enumerate}
  \end{enumerate}
  \label{defn:witness}
\end{definition}


Note that when $S^{*}$ is a Young tableau, the contents of the unique
cells containing $i-1, i$ and $i+1$ must all be distinct, ensuring
that the witness is always the index of the median of the set
$\{c_{i-1},c_{i},c_{i+1}\}$.

Next we define the involution $\vp_{i}$ on starred strong tableaux
that will serve as a model for dual equivalence. Intuitively, if $i$
and $j$ are witnessed by $h$ in $S^{*}$, then an elementary dual
equivalence move should be based on the map $\swap_{i,j}$ where
$\{h,j\}=\{i-1,i+1\}$. This would be straightforward but for the
difficulty of defining how the stars should behave under such an
action.  We obtained these rules experimentally guided by the
principle that the stars should move as little as possible while
preserving the spin statistic, always remaining in the same connected
component of the union of cells in $S^{*}$ containing $i-1,i,i+1$ but
necessarily switching which letter they adorn.  This will be the
action of $\vp_i$ whenever such a move is possible without changing
the witness. However, if the interval is a chain and the starred
letters both lie in the same connected component, then neither an
interval swap nor a star swap is possible. We overcome this challenge
by exchanging saturated chains of length three.

\begin{definition}  \label{defn:phi}
  Fix a starred strong tableau $S^{*}=(S,c^{*})$ of rank $m$ with
$1<i<m$.  Let $h$ be the $i$-witness for $S^{*}$.  If $h \neq i$, then
let $j$ be defined by $\{i-1,i+1\} = \{j,h\}$.  Let $S_{q}$ be the
union of all $q$-ribbons and let $S_{q^*}$ be the connected component
of $S_q$ containing $q^*$ for $1\leq q \leq m$. We will say $S_{q}$
\emph{nests} $S_{p^{*}}$ if the content of every cell of $S_{p^{*}}$
is also the content of a cell in $S_{q}$ but no head or tail of a
ribbon in $S_{q}$ has the same content as the head or tail of
$S_{p^{*}}$.  Similarly, a connected skew shape $A$ \textit{nests}
another connected skew shape $B$ provided the content of every cell of
$B$ is the content of some cell of $A$, but the largest and smallest
contents of cells in $A$ are not the contents of any cells in $B$.
Let $b_{q}$ be the content of the ribbon tail for $S_{q^{*}}$.  We
will say $S_{i}$ and $S_{j}$ are \emph{not abutting} if
$b_{i},b_{j},(c_{i}+1),(c_{j}+1)$ have distinct residues, otherwise
$S_{i}$ and $S_{j}$ are \textit{abutting}.  Let $B_{i}$ and $B_{j}$ be
the connected components of $S_i \cup S_j$ containing $i^*$ and $j^*$,
respectively.

Then $ \vp_i(S^*)$ is
defined by the first case that applies below

  \begin{equation}    \label{eqn:phi}
    \vp_i(S^*) = \left\{ \begin{array}{rl}
        S^{*}	 
        & \mbox{if $i=h$, } \\[2ex]
        \basic_{i,j} (S^*) 
        & \mbox{if $S_{i}$ and $S_{j}$ are not abutting, or if $B_{i}$ and  $B_{j}$ have} \\
& \mbox{different shapes and neither nests $S_{h^{*}}$,}\\[2ex]
        \snake^{h}_{i,j} (S^*) 
        & \text{if }b_{h} \equiv b_{j} \text{ and } c_{h} \equiv c_{j}, \\[2ex]
        \basic_{i,j} \basic_{i,h}(S^{*}) 
        & \mbox{if  $S_{i}$ or $S_{j}$ nests $S_{h^*}$,}\\[2ex]
        \double^{h}_{i,j}(S^{*})  
        & \mbox{if  $B_{i}$ or $B_{j}$ nests $S_{h^*}$,}\\[2ex]
        \starswap_{i,j} (S^*) 
        & \mbox{if $B_{i} \neq B_{j}$ but they have the same shape.} 
      \end{array} \right.
  \end{equation}
\end{definition}

Here the map $\vp_{i}$ depends on four types of ribbon swaps: basic
swap, snake swap, double swap, and star swap. Each of the ribbon swaps
will only be well-defined under certain circumstances. As we prove in
Theorem~\ref{thm:phi}, the circumstances where a ribbon swap is
applied in \eqref{eqn:phi} will be precisely the circumstances when
the ribbon swap is well-defined.  The fact that these are all possible
cases can be observed from the fact that $h^{*}$ lies weakly between
$i^{*}$ and $j^{*}$ and the notation at the beginning of
Definition~\ref{defn:phi}.

The \emph{basic swap}, denoted $\basic_{i,j} (S^{*})$, is the result
of an interval swap on $S$ and interchanging the blocks containing
$i^{*}$ and $j^{*}$
\[
\basic_{i,j} (S^{*})=(\swap_{i,j}(S), c^{*} (B_{i} \leftrightarrow B_{j}) ).
\]
For example, if $n=4$ and $i=4$ then $\vp_{4} = \basic_{4,5}$ interchanges 
\begin{equation}\label{e:bswap.example}
\rnode{left}{%
  \tableau{
    5^{*} & \\
    4 & \\
    1^{*} & 2^{*} & 3^{*} & 4^{*} & \\
  }}
\hspace{ 5\cellsize}
\rnode{right}{%
  \tableau{
    4 & \\
    4^{*} & \\
    1^{*} & 2^{*} & 3^{*} & 5^{*} & \\
  }}
\psset{nodesep=8pt,linewidth=.1ex,offset=4pt}
\everypsbox{\scriptstyle}
\ncline[nodesepB=17pt]{<->} {left}{right} \naput{\basic_{4,5}}
\end{equation}
In the left tableau, $B_{4}$ is the cell of content 3 filled by
$4^{*}$ and $B_{5}$ is the set of cells with contents $\{-1,-2\}$
filled by $4,5^{*}$.  In the right tableau $B_{5}$ is the cell of
content 3 and $B_{4}$ is the set of cells with contents $\{-1,-2\}$.
Note, the star in the $\{-1,-2 \}$ block must move when applying the
map in either direction so as to return a valid starred strong tableau
with a star at the head of an $i$-ribbon and a $j$-ribbon.

A description of the operation $c^{*} (B_{i} \leftrightarrow B_{j})$
is given specifically as follows.  Let $d_{p}=c_{p}+1$ for each $p$ so
that the $p$-ribbons in $S^{*}$ correspond with applying the
transposition $t_{b_{p},d_{p}}$.  Let $r_{p}=d_{p}-b_{p}$ be the
length of a $p$-ribbon in $S^{*}$.  Let $\varepsilon_{p}$ be the unit
vector with a 1 in the $p$-th position.  Assume $p<q$, then define

\newcommand{\1}{p} 
\newcommand{\2}{q}
\newcommand{\cflop}{\mathrm{flop}}

\begin{equation}\label{e:flop}
\cflop_{\2,\1} (c^{*})=
\cflop_{\1,\2} (c^{*})=
\begin{cases}
t_{\1,\2}(c^{*}) - r_{\1} \cdot\varepsilon_{\1} & \text{if $d_{\1} \equiv
d_{\2}$ and $|B_{\1}|< |B_{\2}|$},
\\
t_{\1,\2}(c^{*}) - r_{\2} \cdot\varepsilon_{\2} & \text{if $d_{\1} \equiv
d_{\2}$ and $|B_{\1}|> |B_{\2}|$},
\\
t_{\1,\2}(c^{*})  + r_{\2} \cdot\varepsilon_{\2} & \text{if $b_{\2} \equiv
d_{\1}$ and $|B_{\1}| > |B_{\2}|$},
\\
t_{\1,\2}(c^{*}) + r_{\1} \cdot\varepsilon_{\1} & \text{if $b_{\1} \equiv
d_{\2}$ and $|B_{\1}|<|B_{\2}|$},
\\
 t_{\1,\2}(c^{*}) & \text{otherwise.}
\end{cases}
\end{equation}
Therefore, formally we define
\[
\basic_{i,j} (S^{*})=(\swap_{i,j}(S), \cflop_{i,j}(c^{*})).
\]
We prove $\basic_{i,j} (S^{*})$ is always a valid starred strong
tableau in Theorem~\ref{thm:phi}.

\begin{remark}
  Note that when $S^*$ is a Young tableau, it is impossible for the
cell containing $i$ to abut the cell containing $j$ when $h\neq i$ is
the witness. Therefore the required ribbon swap in this case will
always be
$\vp_{i}(S^{*})=\basic_{i,j}(S^{*})=(\swap_{i,j}(S),t_{i,j}(c^{*}))$. Hence
$\vp_i$ reduces to the usual elementary dual equivalence relation on
Young tableaux.
\end{remark}

The \emph{snake swap}, denoted $\snake^{h}_{i,j} (S^{*})$, is the
result of moving the stars on all three ribbons $i-1,i,i+1$ while
keeping the underlying strong tableau fixed.  If $i-1$ is the witness,
the moves are based on the permutation $231=t_{12}t_{23}$; if $i+1$ is
the witness, the moves are based on the permutation
$312=t_{23}t_{12}$.  Either way, $j$ will become the $i$-witness of
$\snake^{h}_{i,j} (S^{*})$.   Assuming $h$ is the witness, then

\begin{equation}\label{e:snake}
\snake^{h}_{i,j} (S^{*})=
\begin{cases}
(S,\ t_{i,j} t_{i,h}(c^{*})     - r_{j} \cdot\varepsilon_{i}  + r_{h}  \cdot\varepsilon_{h})  &	\text{if } (c_{j}<c_{i}) \text{ xor } (i<j),  \\
(S,\  t_{i,j} t_{i,h}(c^{*})  + r_{i} \cdot\varepsilon_{i} - r_{i}
\cdot\varepsilon_{h}) & \text{otherwise.}
\end{cases}
\end{equation}

We will show in the proof of Theorem~\ref{thm:phi} that
$\snake^{h}_{i,j}$ is only applied when $S_i \cup S_j$ and $S_i \cup
S_h$ are both single connected ribbons so
$[\lambda^{(i-2)},\lambda^{(i+1)}]$ is a chain by
Proposition~\ref{prop:chains}.  When $h=i+1$, the stars move away from
the diagonal of content $c_{h}$ along these ribbons and when $h=i-1$
the stars move in toward the diagonal of content $c_{h}$ along these
ribbons.  The star on the witness toggles between $h$ and $j$ by
sliding along the diagonal with content $c_{h}$.  For example, if
$n=2$ and $i=3$, then $\vp_{3} = \snake^{4}_{3,2}$ maps
\[
\rnode{left}{%
\tableau{
4 & \\
3^{*} & 4^{*} & \\
2 & 3 & 4 & \\
1^{*} & 2^{*} & 3 & 4 & \\
}}
\hspace{5\cellsize}
\rnode{right}{%
\tableau{
4^{*} & \\
3 & 4 & \\
2^{*} & 3 & 4 & \\
1^{*} & 2 & 3^{*} & 4 & \\
}}
\psset{nodesep=10pt,linewidth=.1ex,offset=4pt}
\everypsbox{\scriptstyle}
\ncline[nodesepB=20pt]{->} {left}{right} \naput{\snake^{4}_{3,2}}
\ncline[nodesepA=20pt]{->} {right}{left} \naput{\snake^{2}_{3,4}}
\]
The inverse map is given by $\snake^{2}_{3,4}$ applied to the tableau
on the right.

The \emph{double swap}, denoted $\double^{h}_{i,j}(S^{*})$, is the
result of two interval swaps on $S$ and another ``almost permutation''
of the three relevant indices in the content vector. Precisely,
\[
\double^{h}_{i,j}(S^{*})= 
\begin{cases}
\left(\swap_{i,j} \swap_{i,h}(S),\ t_{i, j} t_{i,h} (c^*)   + r_{h} \cdot\varepsilon_{h} \right) & \text{if } b_{h} \equiv b_{j}, \\
\left(\swap_{i,j} \swap_{i,h}(S),\ t_{i, j} t_{i,h} (c^*)   - r_{h} \cdot\varepsilon_{i} \right) & \text{if } c_{h} \equiv c_{j}.
\end{cases}
\] 
Since $\double^{h}_{i,j}$ is only applied when $B_{i}$ or $B_{j}$
nests $S_{h^{*}}$ but neither $S_{i}$ or $S_{j}$ nests $S_{h^{*}}$, we
can conclude that the nesting block is a ribbon and that $S_{j}$
contains a cell with the same content as either the head or tail of
$S_{h^{*}}$ by considering all possible rank 2 abutting rod exchanges.
Thus, when its applied either $b_{h} \equiv b_{j} \text{ or } c_{h}
\equiv c_{j}$.  For example, if $n=3$ and $i=4$, then $\vp_{4}$
interchanges the following tableaux via double swaps:
\[ 
\rnode{left}{%
\tableau{
5 & \\
5 & \\
4^{*} & 5 & \\
3^{*} & 5^{*} & \\
1^{*} & 2^{*} & 3 & \\
}}
\hspace{5\cellsize}
\rnode{right}{%
\tableau{
5 & \\
4 & \\
3 & 5^{*} & \\
3^{*} & 4^{*} & \\
1^{*} & 2^{*} & 5 & \\
}}
\psset{nodesep=10pt,linewidth=.1ex,offset=4pt}
\everypsbox{\scriptstyle}
\ncline[nodesepB=20pt]{->} {left}{right} \naput{\double^{3}_{4,5}}
\ncline[nodesepA=20pt]{->} {right}{left} \naput{\double^{5}_{4,3}}
\]
\bigskip

The \emph{star swap}, denoted $\starswap_{i,j} (S^{*})$, is the result
of moving the star on $i^{*}$ to the adjacent $j$-ribbon and vice
versa while keeping the underlying strong tableau fixed.  To be
precise, if $B_{i}$ and $B_{j}$ are distinct and both $B_{i}$ and
$B_{j}$ contain both an $i$ and $j$-ribbon, then both blocks have the
same shape by Proposition~\ref{prop:stack} and
Proposition~\ref{prop:chains}.  Say $f$ is the offset of the contents
of $B_{j}$ from $B_{i}$, so $c_{i}+f$ is the content of the head of
the $i$-ribbon in $B_{j}$ and $c_{j}-f$ is the content of the head of
the $j$-ribbon in $B_{i}$.  Then
\[
\starswap_{i,j} (S^{*})=(S, c^{*} + f\cdot\varepsilon_{i} - f\cdot\varepsilon_{j} ).  
\]
For example, if $n=4$ and $i=6$, then $\vp_{6} = \starswap_{6,7}$ interchanges 
\[
\rnode{left}{%
\tableau{
4^{*} & \\
3^{*} & 6^{*} & 7 & 7 & \\
1^{*} & 2^{*} & 4 & 5^{*} & 6 & 7 & 7^{*} & \\
}}
\hspace{5\cellsize}
\rnode{right}{%
\tableau{
4^{*} & \\
3^{*} & 6 & 7 & 7^{*} & \\
1^{*} & 2^{*} & 4 & 5^{*} & 6^{*} & 7 & 7 & \\
}}
\psset{nodesep=10pt,linewidth=.1ex,offset=4pt}
\everypsbox{\scriptstyle}
\ncline[nodesepB=20pt]{<->} {left}{right} \naput{\starswap_{6,7}}
\]

\subsection{A well-defined involution}
\label{sec:well.defined} 

Given the complicated definition of the affine dual equivalence
relations, it is not obvious that $\vp_i$ is well-defined, much less
that it is an involution. Our next task is to establish these two
facts. In the course of doing so, we provide many more examples of the
action of $\vp_i$, though in the interest of space only the relevant
cells in the strong tableaux are shown.

\begin{theorem}
  For each $1<i<m$, the map $\vp_i$ is a well-defined involution on
  all starred strong tableaux of a fixed $n$-core $\lambda$ of rank $m$.
\label{thm:phi}
\end{theorem}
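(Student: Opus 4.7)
The plan is a case analysis along the six clauses of equation~\eqref{eqn:phi}. The first step is to show that the clauses are exhaustive under first-match precedence: setting aside the fixed-point case $i=h$, every remaining configuration of the positions of $(i-1)^*, i^*, (i+1)^*$ falls into exactly one of the five ribbon swap cases. This is a direct consequence of the classification of length-two intervals in the $n$-core poset carried out in Section~\ref{sec:poset} (summarized in Table~\ref{table:length.two}), applied to the intervals $[\lambda^{(i-2)},\lambda^{(i)}]$, $[\lambda^{(i-1)},\lambda^{(i+1)}]$, and $[\lambda^{(i-2)},\lambda^{(i+1)}]$, combined with the characterization of nesting, abutting, and chain-versus-$B_2$ structure in Propositions~\ref{prop:stack} and~\ref{prop:chains}.

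The second step is to verify that each ribbon swap produces a valid starred strong tableau when it is invoked. For $\basic_{i,j}$, the non-abutting hypothesis (or the ``different shapes, neither nests'' alternative) forces $[\lambda^{(i-1)},\lambda^{(i+1)}]$ to be isomorphic to $B_2$ by Table~\ref{table:length.two}, so $\swap_{i,j}(S)$ is defined; the five sub-cases of~\eqref{e:flop} are then engineered to cover the five possible abacus configurations (disjoint, interleaving, nested, and the two abutting-with-three-residues variants) so that the resulting stars land at the heads of an $i$-ribbon and a $j$-ribbon. For $\snake^{h}_{i,j}$, the hypothesis $b_h \equiv b_j$ and $c_h \equiv c_j$ forces $[\lambda^{(i-2)},\lambda^{(i+1)}]$ to be a chain whose skew shape is one long ribbon alternating between $h$- and $j$-ribbons with a single intervening $i$-ribbon, so the stars may be slid along the content-$c_h$ diagonal without changing the strong tableau. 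For $\basic_{i,j}\basic_{i,h}$ and $\double^{h}_{i,j}$, the nesting hypothesis together with Proposition~\ref{prop:stack} ensures that the two interior rod exchanges are defined in the correct order, and the prescribed star-placements land on ribbon heads by inspection of the two sub-cases $b_h \equiv b_j$ versus $c_h \equiv c_j$. For $\starswap_{i,j}$, Propositions~\ref{prop:stack} and~\ref{prop:chains} together force $B_i$ and $B_j$ to be identical as skew diagrams, so the content offset $f$ is unambiguous.

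The third step is to verify involution by showing that, in each non-trivial case, applying the same clause of~\eqref{eqn:phi} to $\vp_i(S^*)$ returns $S^*$. The basic swap is self-inverse because $\swap_{i,j}$ is an involution and the five sub-cases of $\cflop_{i,j}$ pair off under the exchange $p \leftrightarrow q$ in~\eqref{e:flop}. The snake swap $\snake^{h}_{i,j}$ is inverted by $\snake^{j}_{i,h}$, with the two directional sub-cases of~\eqref{e:snake} interchanged under $h \leftrightarrow j$. The composition $\basic_{i,j}\basic_{i,h}$ and the double swap $\double^{h}_{i,j}$ are each inverted by exchanging the roles of $h$ and $j$ throughout, and the star swap is self-inverse by the symmetry of $B_i$ and $B_j$.

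The main obstacle will be the meticulous bookkeeping needed to confirm that the correct clause of~\eqref{eqn:phi} applies in reverse; specifically, one must verify that the $i$-witness of $\vp_i(S^*)$ is precisely the partner prescribed by the inverse operation and that the nesting and abutting conditions transform correctly under each swap. The most delicate case is the double swap, where two consecutive rod exchanges reshape the local structure while the stars must move in geometrically mirrored directions between the sub-cases $b_h \equiv b_j$ and $c_h \equiv c_j$, all while preserving spin. Fortunately, the abacus model reduces each such verification to a finite check on the rod configurations enumerated in Table~\ref{table:length.two}.
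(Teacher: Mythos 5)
Your proposal takes essentially the same approach as the paper: a case analysis along the clauses of \eqref{eqn:phi}, verifying well-definedness and involution within each case, with the witness stability as the central obstacle. The structure is sound and the identification of the obstacle is correct.

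One point where the proposal is too optimistic: you claim that the hard verifications ``reduce to a finite check on the rod configurations enumerated in Table~\ref{table:length.two}.'' Table~\ref{table:length.two} classifies length-two intervals, i.e., two consecutive rod exchanges. But the cases governed by $\snake^{h}_{i,j}$, $\basic_{i,j}\basic_{i,h}$, and $\double^{h}_{i,j}$ live on length-three intervals $[\lambda^{(i-2)},\lambda^{(i+1)}]$, and the paper has to introduce additional machinery beyond that table — most notably the triple rod-exchange analysis around \eqref{e:triple.rod.exchange}, which is used to prove the essential claim that some connected component of $S_h \cup S_i \cup S_j$ is a single $h$-ribbon (this is what licenses the preliminary interval swap $\swap_{i,h}(S)$). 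Likewise, the argument that the witness $h$ is preserved by $\basic_{i,j}$ in the abutting case is not a finite abacus check but a structured combinatorial argument: it uses Proposition~\ref{prop:overlap} to rule out partial overlap with $h$-ribbons, then argues that the $h^*$-ribbon must overlap $J$ rather than $I$, forcing $J$ northwest of $I$ with a shared head, and only then invokes Definition~\ref{def:witness}(2). So while your outline is a faithful description of the paper's strategy, a complete proof cannot lean on Table~\ref{table:length.two} alone for the length-three cases; it needs the auxiliary analysis of triple rod exchanges and the nesting/abutting propositions worked out in Sections~\ref{sec:poset} and~\ref{sec:equivalence}.
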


\begin{proof}
Let $S^*$ be a starred strong tableau of shape $\lambda$.  We can
assume $i \neq h$ throughout the proof, the contrary case being
trivial. Suppose first that $S_i$ and $S_j$ are not abutting.  In this
case, a $\swap_{i,j}(S)$ is well defined and $b_{i},b_{j},d_{i},d_{j}$
are all distinct $\mathrm{mod}\ n$ by the classification of rod
exchanges for rank 2 intervals in Section~\ref{sec:poset-intervals}.
Unless $S_i$ and $S_j$ come from an interleaving rod exchange with some
$i$-ribbon nested in an $i+1$-ribbon or vice versa, the interval swap
will simultaneously change all $i$'s to $j$'s and
conversely. Therefore
$\vp_{i}(S^{*})=\basic_{i,j}(S^{*})=(\swap_{i,j}(S),t_{i,j}(c^{*}))$
is a well-defined starred strong tableau with stars in the original
cells in $S^{*}$, though now adorning the opposite letter among
$\{i,j\}$ from before. When $S_i$ and $S_j$ come from an interleaving
rod exchange with some $i$-ribbon nested in an $i+1$-ribbon or vice
versa, then the interval swap will change all entries in the shorter
ribbon appearing independently as well as entries in the longer ribbon
not on the same content as a shorter ribbon. In particular, the shape
of the blocks $B_i$ and $B_j$ remains unchanged. Therefore
$\basic_{i,j}(S^{*})$ is again a valid starred strong tableau.  In
this case, the star adorning the longer ribbon remains in place, and
the star adorning the shorter ribbon remains if the shorter ribbon is
not nested in a longer, otherwise it slides one position along the
diagonal; see Figure~\ref{fig:P-nested} for an example.

  Consequently, in order to show $\vp_{i}$ is an involution in this
case, it remains only to show that $h$ remains the witness after
applying $\basic_{i,j}$. Since the effect on the content vector is
merely to interchange $c_i$ and $c_j$, the result follows provided
$c_h \neq c_j$. However, the contrary case forces an $i$-ribbon to
abut both the $i-1$-ribbon and $i+1$-ribbon with heads on content
$c_{i-1} = c_{i+1}$. This ensures that $\basic_{i,j}$ is an involution
in this case.  

  \begin{figure}[ht]
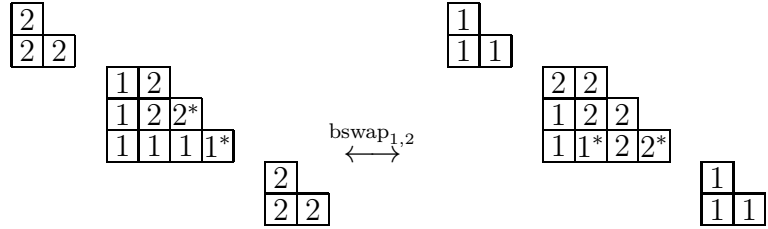

    \begin{center}
      \begin{displaymath}
    \tableau{2 \\
             2 & 2 \\ 
               &   & & 1 & 2 \\
               &   & & 1 & 2 & 2^* \\
               &   & & 1 & 1 & 1 & 1^* \\ 
               &   & &   &   &   &    &  & 2 \\
               &   & &   &   &   &    &  & 2 & 2 }
    \raisebox{-4\cellsize}{%
      $\displaystyle{\stackrel{\basic_{1,2}}{\longleftrightarrow}}$}
    \hspace{\cellsize}
    \tableau{1 \\
             1 & 1 \\
               &   &   & 2 & 2 \\
               &   &   & 1 & 2 & 2 \\
               &   &   & 1 & 1^* & 2 & 2^* \\
               &   &   &   &     &   &   & & 1 \\
               &   &   &   &     &   &   & & 1 & 1 }
      \end{displaymath}
      \caption{\label{fig:P-nested}The action of $\vp_i$ on $S^*$ when
        $S_i$ and $S_j$ are nested}
    \end{center}  
  \end{figure}

Henceforth, we will assume that $S_{i}$ and $ S_{j}$ are abutting, and
thus both $B_{i}$ and $B_{j}$ must have ribbon shape by
Corollary~\ref{cor:ribbons}.

  \begin{figure}[ht]
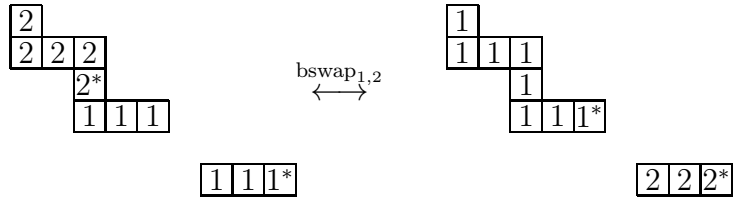

    \begin{center}
      \begin{displaymath}
        \tableau{2 \\
          2 & 2 & 2 \\
          &   & 2^* \\
          &   & 1   & 1 & 1 \\ \\
          &   &     &   &   & & 1 & 1 & 1^*}
        \raisebox{-2\cellsize}{%
          $\displaystyle{\stackrel{\basic_{1,2}}{\longleftrightarrow}}$}
        \hspace{2\cellsize}
        \tableau{1 \\
          1 & 1 & 1 \\
          &   & 1 \\
          &   & 1 & 1 & 1^* \\ \\
          &   &   &   &     & & 2 & 2 & 2^*}
      \end{displaymath}
      \caption{\label{fig:P-witness}The action of $\vp_i$ when $S_i
        \cup S_j$ is abutting and $S_h$ is not nested.}
    \end{center}
  \end{figure}

Assume that $B_{i}$ and $B_{j}$ have different (ribbon) shapes but
neither nests $S_{h^{*}}$.  Since $B_{i}$ and $B_{j}$ have different
shapes, the map $\swap_{i,j}$ will toggle between each block
containing only one letter and exactly one of these blocks containing
both letters as shown in Figure~\ref{fig:P-witness}.  By
Proposition~\ref{prop:stack} one can deduce how the stars move in the
blocks $B_{i}$ and $B_{j}$ in order to adorn the other letter.  These
moves are summarized in the function $\cflop_{i,j}(c^{*})$, hence
$T^{*}=\vp_{i}(S^{*})=\basic_{i,j}(S^{*})$ is a well defined starred
strong tableau.  Furthermore, by inspection we have that
$\basic_{i,j}(T^{*})=S^{*}$.  Thus, $\vp_{i}(S^{*})$ is an involution
provided $h$ is also the $i$-witness of $T^{*}$.

Observe that the only way for the witness to change is if $h^*$ lies
on a diagonal within a block containing both $i$'s and $j$'s, and
$h^*$ lies weakly between their respective heads.  Let $I$ and $J$ be
the abutting $i$-ribbon and $j$-ribbon in the block overlapping
$h^{*}$.  By Proposition~\ref{prop:overlap}, consecutive ribbons may
not have partially overlapping contents. Therefore if an $h$-ribbon
has content overlapping an $i$-ribbon, one of the two must be
nested. By assumption, $S_{h^{*}}$ is not nested inside either $B_{i}$
or $B_{j}$, hence is not nested in $I$.  On the other hand, if
$h$-ribbons nest $i$-ribbons, then they must also nest $j$-ribbons,
otherwise a $\swap_{i,h}(S)$ is possible and will leave $i$-ribbons
and $j$-ribbons with partially overlapping contents, again
contradicting Proposition~\ref{prop:overlap}.  Therefore $h$-ribbons
and $i$-ribbons may not have overlapping contents, so the cell
containing $h^{*}$ must overlap $J$ in content.

  If $J$ lies southeast of $I$, this forces the $h^{*}$-ribbon to
overlap $I$ or be nested in $J$, neither of which is possible. Thus
$J$ must lie northwest of $I$, hence the head of the $h$-ribbon is
forced to have the same content as the head of $J$.  Furthermore, the
$h^{*}$-ribbon must be longer than $J$ since by assumption $h$-ribbons
are not nested in $I \cup J$. Therefore when $h^*$ and $j^*$ lie on
the same diagonal, $h$ remains the $i$-witness for $\vp_i(S^*)$ using
part (2) of Definition~\ref{def:witness}.  
 
  Next consider the case where $c_{i-1} \equiv c_{i+1}$ and $b_{i-1}
  \equiv b_{i+1}$.  Then, $t_{b_{i-1},d_{i-1}}= t_{b_{i+1},d_{i+1}}$
  as affine permutations, where recall $d_{p}=c_{p}+1$.  By
  Corollary~\ref{cor:transposition.seq.bijection}, $S^{*}$ is
  associated to a transposition sequence.  In order for
  $t_{b_{i-1}d_{i-1}}\rightarrow t_{b_{i}d_{i}}\rightarrow
  t_{b_{i+1}d_{i+1}}$ to be a valid triple in the transposition
  sequence, $t_{b_{i}d_{i}}$ must not commute with the other two.
  Hence at least one $i+1$-ribbon completely overlaps some
  $i-1$-ribbon, sharing both a head and tail, and $i$-ribbons
  must abut each such pair from both sides. By
  Proposition~\ref{prop:chains}, this means $S_i \cup S_j$ and $S_i
  \cup S_h$ are both ribbons, hence $\snake_{i,j}^{h}$ is
  well-defined on $S^{*}$.  By inspecting \eqref{e:snake}, we see that
  $T^{*}=\snake_{i,j}^{h}(S^{*})$ is a starred strong tableau on the
  same underlying strong tableau $S$ with $j$ as its $i$-witness and
  $\snake_{i,h}^{j}(T^{*})=S^{*}$, so $\vp_i$ is an involution in this
  case as well.  For example, see Figure~\ref{fig:doublesnakes}.

  \begin{figure}[ht]
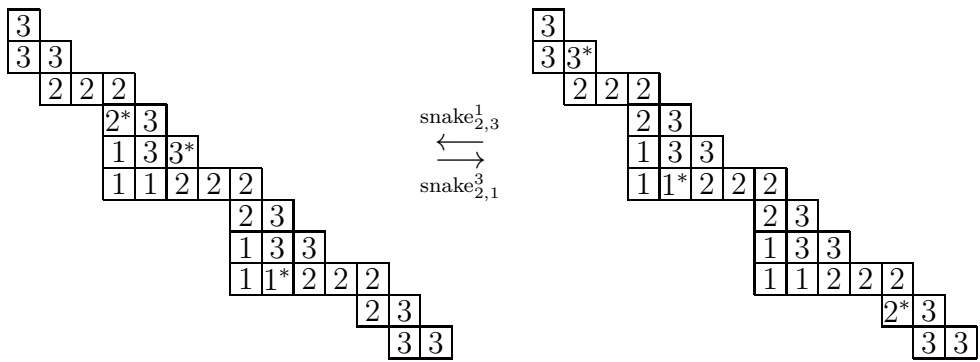

    \begin{center}
      \begin{displaymath}
        \tableau{%
          3 \\ 
          3 & 3 \\ 
          & 2 & 2 & 2\\
          & & & 2^* & 3 \\           
          & & & 1 & 3 & 3^*\\        
          & & & 1 & 1 & 2 & 2 & 2\\
          & & & & & & & 2 & 3 \\           
          & & & & & & & 1 & 3 & 3\\        
          & & & & & & & 1 & 1^* & 2 & 2 & 2\\
          & & & & & & & & & & & 2 & 3\\          
          & & & & & & & & & & & & 3 & 3}
        \hspace{-\cellsize}
        \raisebox{-4\cellsize}{%
     $\displaystyle\mathop{\stackrel{\displaystyle\longleftarrow}{\longrightarrow}}_{\snake_{2,1}^{3}}^{\snake_{2,3}^{1}}$}
        \hspace{\cellsize}
        \tableau{
          3 \\ 
          3 & 3^* \\ 
          & 2 & 2 & 2\\
          & & & 2 & 3 \\           
          & & & 1 & 3 & 3\\        
          & & & 1 & 1^* & 2 & 2 & 2\\
          & & & & & & & 2 & 3 \\           
          & & & & & & & 1 & 3 & 3\\        
          & & & & & & & 1 & 1 & 2 & 2 & 2\\
          & & & & & & & & & & & 2^* & 3\\          
          & & & & & & & & & & & & 3 & 3}
      \end{displaymath}
      \caption{\label{fig:doublesnakes}The action of $\vp_i$ when
        $i-1$-ribbons and $i+1$-ribbons share a head and tail.}
    \end{center}
  \end{figure}    

  Henceforth, we will assume that $S_{i}$ and $ S_{j}$ are abutting
  and either $c_{i-1} \not \equiv c_{i+1}$ or $b_{i-1} \not \equiv
  b_{i+1}$.   The next case to consider is when
  $B_{i}$ or $B_{j}$ nests $S_{h^*}$, including the possibility that
  $S_{i}$ or $S_{j}$ nests $S_{h^*}$.  Note that if $B_{i} = B_{j}$,
  then $B_i$ necessarily nests $S_{h^*}$ in order for $h$ to be the
  witness.

  We claim that in all these cases some connected component of $S_h
  \cup S_i \cup S_j$ is a single $h$-ribbon.  If some $h$-ribbon is
  nested in an $i$-ribbon or a $j$-ribbon, then the claim follows
  immediately from Lemma~\ref{lem:nested.ribbons}.  So, assume that
  some connected component of $S_i \cup S_j$ contains both $i$'s and
  $j$'s and nests an $h$-ribbon.  By Proposition~\ref{prop:nested}, we
  may further assume that the nested $h$-ribbon shares a head or tail
  with the $j$-ribbon. Necessarily the $h$-ribbon and $j$-ribbon must
  both abut an $i$-ribbon at their shared content in the strong
  tableau.  Therefore we have one of the following scenarios for the
  three transpositions corresponding to $i-1,i,i+1$ on the abacus,
  where $a < b < c < d \leq a+n$.
  \begin{equation}
    \label{e:triple.rod.exchange}
    \begin{array}{cc} \begin{array}{rl}
        \scriptstyle{a} & \rnode{ae}{\rule{4em}{1pt}} \\
        \scriptstyle{b} & \rnode{aa}{\rule{4em}{1pt}} \\
        \scriptstyle{c} & \rnode{ab}{\rule{4em}{1pt}} \\
        \scriptstyle{d} & \rnode{ac}{\rule{4em}{1pt}}
      \end{array} \hspace{2em} & \hspace{2em}
      \begin{array}{rl}
        \scriptstyle{a} & \rnode{ee}{\rule{4em}{1pt}} \\
        \scriptstyle{b} & \rnode{ea}{\rule{4em}{1pt}} \\
        \scriptstyle{c} & \rnode{eb}{\rule{4em}{1pt}} \\
        \scriptstyle{d} & \rnode{ec}{\rule{4em}{1pt}}
      \end{array}
    \end{array}
    \everypsbox{\scriptstyle}
    \psset{nodesep=5pt,linewidth=.1ex}
    \ncdiag[angle=0,linearc=.2]{<->}{ae}{aa} 
    \ncdiag[angle=0,linearc=.4]{<->}{aa}{ac} 
    \ncdiag[angle=0,linearc=.2,nodesep=2pt]{<->}{aa}{ab} 
    \ncdiag[angle=0,linearc=.2,nodesep=2pt]{<->}{ea}{eb} 
    \ncdiag[angle=0,linearc=.4]{<->}{ee}{eb} 
    \ncdiag[angle=0,linearc=.2]{<->}{eb}{ec} 
  \end{equation}


  To ease notation we assume $h=i-1$ and $j=i+1$ and we are doing the
  rod exchange on the left in \eqref{e:triple.rod.exchange}, noting
  that the other cases are completely analogous. In this case,
  analyzing the transposition triple $t_{b,c}\rightarrow
  t_{a,b}\rightarrow t_{b,d}$ shows that initially the length of
  $\mathrm{rod}(b)$ cannot be weakly between the lengths of
  $\mathrm{rod}(a)$ and $\mathrm{rod}(d)$ or else the transpositions
  don't each increase the rank by exactly one at each step.
  Furthermore, the length of $\mathrm{rod}(b)$ cannot be less than the
  length of $\mathrm{rod}(d)$ because otherwise there would be no
  $i+1$-ribbon with content overlapping any $i-1$-ribbon by the
  definition of $T(\lambda,n)$ and
  Corollary~\ref{cor:transposition.seq.bijection} contradicting the
  assumption that some connected component of $S_i \cup S_j$ contains
  both $i$'s and $j$'s and nests an $h$-ribbon.  Therefore, the length
  of $\mathrm{rod}(b)$ is strictly greater than the length of
  $\mathrm{rod}(a)$ so, by
  Corollary~\ref{cor:transposition.seq.bijection} again, there must an
  $i-1$-ribbon occurring independently from all $i,i+1$-ribbons.

  With the claim proved, we conclude by Proposition~\ref{prop:chains}
that $T=\swap_{i,h}(S)$ is a well defined, valid strong tableau. After
such, some $i$-ribbon appears independently of all $j$-ribbons in $T$,
making $U=\swap_{i,j}(T)$ well defined.

  In the case $B_{i}$ or $B_{j}$ nests $S_{h^*}$ but neither $S_{i}$
or $S_{j}$ nests $S_{h^*}$, the final result of $\swap_{i,j}
(\swap_{i,h}(S))$ is much like a single interval swap in the case of
nested $i,i+1$-ribbons: all independently occurring $h$'s change to
$j$'s and all letters of $S_{i} \cup S_{j}$ not on the same diagonal
as an $h$ will change with $i$'s becoming $h$'s and $j$'s becoming
$i$'s; for example, see Figure~\ref{fig:snakes}. The shape and
contents of the nested ribbon remains the same, but now these are
$j$-ribbons. Therefore, $U^{*}=\double_{i,j}^{h}(S)$ is a well-defined
starred strong tableau with a star placed at the head of some
$p$-ribbon for each $p$.  The effect of $\vp_{i}$ on the content
vector for $S^{*}$ is an involution by inspection.  Since $j$-ribbons
are now nested in $U^{*}$, we only need to show $j$ becomes the
$i$-witness in $U^{*}$ in order to prove $\vp_{i}$ is an involution on
such an $S^{*}$. This will clearly be the case so as long as $c_{h}
\neq c_{j}$ both before and after applying $\double_{i,j}^{h}$.
Assuming $c_{h} = c_{j}$, an $i$-ribbon will be forced to lie
southeast of $S_{h^{*}}$ and $S_{j^{*}}$. However, after applying
$\vp_i(S^{*})$, $h$-ribbons and $j$-ribbons will share a tail instead,
so the witness indeed changes as desired.

  \begin{figure}[ht]
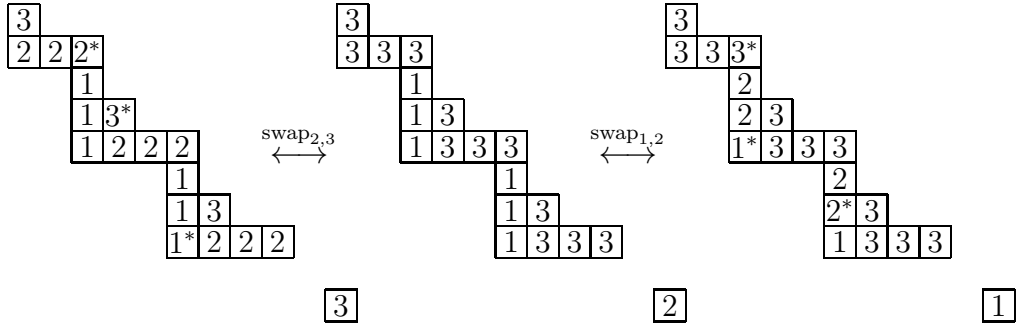

    \begin{center}
      \begin{displaymath}
        \tableau{3 \\
          2 & 2 & 2^*\\
          &   & 1 \\
          &   & 1 & 3^*\\
          &   & 1 & 2 & 2 & 2\\
          &   &   &   &   & 1\\
          &   &   &   &   & 1 & 3\\
          &   &   &   &   & 1^* & 2 & 2 & 2\\ \\
          &   &   &   &   &   &   &   &  & & 3}
        \hspace{-3\cellsize}
        \raisebox{-4\cellsize}{$\stackrel{\swap_{2,3}}{\longleftrightarrow}$}
        \tableau{3 \\
          3 & 3 & 3\\
          &   & 1 \\
          &   & 1 & 3\\
          &   & 1 & 3 & 3 & 3\\
          &   &   &   &   & 1\\
          &   &   &   &   & 1 & 3\\
          &   &   &   &   & 1 & 3 & 3 & 3\\ \\
          &   &   &   &   &   &   &   &  & & 2}
        \hspace{-3\cellsize}
        \raisebox{-4\cellsize}{$\stackrel{\swap_{1,2}}{\longleftrightarrow}$}
        \tableau{3 \\
          3 & 3 & 3^*\\
          &   & 2 \\
          &   & 2 & 3\\
          &   & 1^* & 3 & 3 & 3\\
          &   &   &   &   & 2\\
          &   &   &   &   & 2^* & 3\\
          &   &   &   &   & 1 & 3 & 3 & 3\\ \\
          & & & & & & & & & & 1} \end{displaymath}
      \caption{\label{fig:snakes}The action of $\vp_2=\double_{i,j}^{h}$
        when $S_{i^*} \cup S_{j^*}$ nests $S_{h^*}$.}  \end{center}
  \end{figure}
    
  Similarly, in the case $S_{i}$ or $S_{j}$ nests $S_{h^*}$, the image
  $U^{*}=\vp_{i}(S^{*})=\basic_{i,j} (\basic_{i,h}(S))$ is a
  well-defined starred strong tableau with a star placed at the head
  of some $p$-ribbon for each $p$ by the proof of a single basic swap
  above.  Either $U_{i}$ or $U_{h}$ nests $j^{*}$ and $j$ is the
  $i$-witness for $U^{*}$.  Hence $\vp_{i}^{2}(S^{*})=S^{*}$ for such
  an $S^{*}$.

  Finally, we will assume $B_{i}$ and $B_{j}$ have the same shape but
lie on distinct content diagonals, each have ribbon shape, and neither
nests $S_{h^{*}}$.  Then $\vp_{i}(S^{*})=\starswap_{i,j}(S^{*})$ is a
well defined starred strong tableau where the same hypotheses hold.
See Figure~\ref{fig:P-starswap} for an example.

  \begin{figure}[ht]
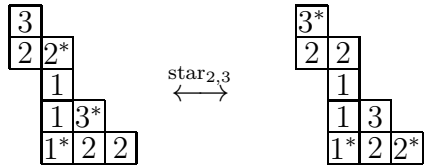

    \begin{center}
      \begin{displaymath}
    \tableau{3 \\
             2 & 2^* \\
               & 1   \\
               & 1   & 3^*\\
               & 1^* & 2 & 2}
    \hspace{\cellsize}
    \raisebox{-2\cellsize}{%
      $\displaystyle{\stackrel{\starswap_{2,3}}{\longleftrightarrow}}$}
    \hspace{2\cellsize}
    \tableau{3^* \\
             2 & 2   \\
               & 1   \\
               & 1   & 3\\
               & 1^* & 2 & 2^*}
      \end{displaymath}
      \caption{\label{fig:P-starswap}The action of $\vp_i$ when $h^*$
        overlaps $S_{i^*} \cup S_{j^*}$ without being nested.}
    \end{center}
  \end{figure}

In this case, both blocks $B_{i}$ and $B_{j}$ contain both $i$'s and
$j$'s.  The only way for the witness to change is if $h^*$ lies on a
diagonal within a block containing both $i$'s and $j$'s, and $h^*$
lies weakly between their respective heads.  The proof that $h$
remains the witness is the same as the argument above for the case
$B_{i}$ and $B_{j}$ have different shape and neither nests
$S_{h^{*}}$.
\end{proof}

\subsection{Preservation of spin}
\label{sec:graph-spin}

Next we show that the involution $\vp_i$ has the added feature of
preserving the $\spin$ statistic. Recall from \eqref{eqn:spin} that
$\spin$ is defined by
\begin{displaymath}
  \spin(S^*) = \sum_{i} n(i) \cdot (h(i) - 1) + d(i^*),
\end{displaymath}
where $n(i)$ is the number of $i$-ribbons, $h(i)$ is the height of an
$i$-ribbon and $d(i^*)$ is the depth of the starred $i$-ribbon. We
will show that $\vp_i$ preserves the spin by tracking the contribution
for $i-1, i$ and $i+1$.

\begin{proposition}
  For any starred strong tableau $S^*$, we have $\spin(\vp_i(S^*)) =
  \spin(S^*)$.
\label{prop:spin}
\end{proposition}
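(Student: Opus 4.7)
The plan is to observe that $\vp_i$ alters only cells and stars bearing the three labels $i-1$, $i$, $i+1$, so the contribution of every other label to $\spin$ is unchanged. It therefore suffices to verify that the partial sum
\[
\Sigma_i(S^*) \;=\; \sum_{p \in \{i-1,i,i+1\}} \bigl[\, n(p)(h(p)-1) + d(p^*) \,\bigr]
\]
is preserved by each of the six branches of Definition~\ref{defn:phi}. The $i=h$ branch is the identity and is trivial.

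For the two branches in which the underlying strong tableau $S$ is unchanged, namely $\snake^h_{i,j}$ and $\starswap_{i,j}$, the ribbon-shape quantities $n(p)$ and $h(p)$ are automatically preserved, so the task collapses to showing $d(h^*)+d(i^*)+d(j^*)$ is invariant. In the $\starswap_{i,j}$ case, Proposition~\ref{prop:stack} forces $B_i$ and $B_j$ to be congruent ribbons translated by an offset $f$, and the star exchange between these two blocks simply swaps the depth contributions of $i^*$ and $j^*$ while leaving $d(h^*)$ untouched. In the $\snake^h_{i,j}$ case, Proposition~\ref{prop:chains} places $S_h \cup S_i \cup S_j$ inside a single connected alternating ribbon, and the explicit star shifts in \eqref{e:snake} cyclically slide the three stars along this strip; a direct bookkeeping check using the prescribed signed translations shows the three depth changes sum to zero.

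For the remaining branches $\basic_{i,j}$, $\basic_{i,j}\basic_{i,h}$, and $\double^h_{i,j}$, the strong tableau changes by one or two interval swaps, and the verification proceeds by the rod-exchange classification recorded in Table~\ref{table:length.two}. In the disjoint and non-nested interleaving subcases of $\basic_{i,j}$, the map simply exchanges all $i$- and $j$-labels globally, so $\Sigma_i$ is preserved by the symmetry of the defining formula in the pair $(i,j)$. The main obstacle is the nested and abutting subcases, where $n(i), h(i), n(j), h(j)$ all shift nontrivially: here I would compute the change in $n(i)(h(i)-1) + n(j)(h(j)-1)$ directly from the relative rod lengths pictured in Figure~\ref{fig:abaci}, and then verify that the star-shift rule encoded in \eqref{e:flop} produces an exactly compensating change in $d(i^*) + d(j^*)$. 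The two double-swap branches reduce to applying the basic-swap invariance twice, using the nesting constraints established in the proof of Theorem~\ref{thm:phi} to guarantee the intermediate starred tableau is well-defined and that no extra depth contributions intrude.
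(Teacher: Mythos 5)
Your high-level plan — isolate the contributions of labels $i-1,i,i+1$ to $\spin$ and check each branch of Definition~\ref{defn:phi} in turn — is exactly the paper's strategy, and your treatments of the identity, $\starswap$, and non-abutting $\basic$ branches are fine. However, there is a genuine gap in your handling of $\double^h_{i,j}$. You assert that "the two double-swap branches reduce to applying the basic-swap invariance twice," but this is only true for the $\basic_{i,j}\basic_{i,h}$ branch (where the paper does indeed note that the abutting basic-swap calculation covers it). The $\double^h_{i,j}$ branch cannot be factored this way: its content-vector update is $t_{i,j}t_{i,h}(c^*) \pm r_h\cdot\varepsilon_{(\cdot)}$, which is \emph{not} the composition $\cflop_{i,j}\circ\cflop_{i,h}$, and crucially the shapes and multiplicities of the $h$-ribbons themselves change under the 3-cycle of labels, so the witness contribution $n(h)(h(h)-1)+d(h^*)$ — which is inert in the basic-swap argument — participates nontrivially here. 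The paper therefore sets up a dedicated accounting for this case, introducing $n_w,h_w,d_w$ for the isolated witness ribbons alongside $n_l,h_l,d_l$ and $n_s,h_s,d_s$, and verifying
\[
\spin_{S^*}(h)+\spin_{S^*}(i)+\spin_{S^*}(j)
=
\spin_{T^*}(h)+\spin_{T^*}(i)+\spin_{T^*}(j)
\]
directly, with an $\varepsilon\in\{0,1\}$ tracking which side of the abutting ribbons the isolated witnesses lie on. Your reduction skips this entirely, and it is precisely the part of the argument where the bookkeeping is most delicate.

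A secondary, lesser issue: in the $\snake^h_{i,j}$ case your phrase "cyclically slide the three stars" and claim that the "three depth changes sum to zero" is the correct conclusion, but the mechanism is not a symmetric cyclic shift. The paper's argument is that moving the starred witness along its diagonal changes its depth by $\pm 1$, exactly one of the two remaining star slides crosses an east/west edge (depth change $\mp 1$, canceling the witness contribution), and the other crosses a north/south edge (depth change $0$). Without this edge-orientation observation the cancellation is not forced, so "a direct bookkeeping check" is doing a lot of unearned work in your sketch.
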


\begin{proof}
  Recall the notation from Definition~\ref{defn:phi}.  Assume $i\neq
h$.  If $S_i$ and $S_j$ are disjoint, interleaving or nested with
non-overlapping content, then $\vp_i$ acts by simultaneously replacing
all $i$'s with $j$'s and conversely. The contribution to $\spin$ for
ribbons other than $i,j$ is unchanged, and these two swap
contributions, thereby preserving the statistic. If any two ribbons of
$S_i$ and $S_j$ are nested with overlapping contents, then recall that
$\vp_i$ does not change the shape of the shorter ribbon nor the height
(nor width) of the longer ribbon, and the stars remain on the same
diagonals. This ensures that contributions to $\spin$ for $i$ and $j$
are exchanged, and all other contributions are unchanged.

  We may now assume that $S_i$ and $S_j$ are abutting for the remainder
  of the proof. If $\vp_i$ acts by $\starswap_{i,j}$, then this
  affects only the depths of $i^*$ and $j^*$.  We claim
  $d(i^{*})+d(j^{*})$ is preserved since every connected component of
  $S_{i} \cup S_{j}$ inclusively between $B_{i}$ and $B_{j}$ have the
  same shape when $\starswap_{i,j}$ is applied.  Hence, $\vp_i$ again
  preserves $\spin$.

  If $\vp_i$ acts by a $\basic_{i,j}$, then recall from the proof of
  Theorem~\ref{thm:phi} that each connected component of $S_{i} \cup
  S_{j}$ is a ribbon, in particular one of $B_{i}$ and $B_{j}$ is a
  longer ribbon containing an $n$-translate of the shorter.  Let $n_l$
  and $n_s$ denote the number of the longer ribbons and shorter
  ribbons in $S_{i} \cup S_{j}$, respectively, and let $h_l$ and $h_s$
  denote their respective heights. Let $d_l$ be the number of longer
  ribbons northwest of the starred long ribbon, and similarly let
  $d_s$ denote the number of shorter ribbons northwest of the starred
  short ribbon.

  Supposing that the connected components of $S_{i} \cup S_{j}$ each
  contain a unique letter, the contributions for $i$ and $j$ to
  $\spin$ are
  \begin{eqnarray} 
    \spin_{S^*}(i) & = & n_s (h_s - 1) + d_s, \label{e:distinct.letters.i} \\
    \spin_{S^*}(j) & = & n_l (h_l - 1) + d_l. \label{e:distinct.letters.j}
  \end{eqnarray}
  On the other hand, letting $T^* = \basic_{i,j}(S^*)$, some connected
  component of $T_{i} \cup T_{j}$ contains both $i$'s and $j$'s. By
  Proposition~\ref{prop:stack}, this implies that in every component
  containing both letters, the smaller entries are south of the larger
  entries if and only if the shorter ribbons appear independently to
  the southeast. Armed with this observation, we compute that if the
  longer ribbon among $B_{i}$ and $B_{j}$ contains both $i$'s and
  $j$'s, then the contribution to $\spin$ for $i$ and $j$ in $T^*$ is
  \begin{eqnarray} 
    \spin_{T^*}(i) & = & (n_s + n_l)(h_s -1) + (d_s + \varepsilon n_l),
    \label{e:nondistinct.letters.i} \\
    \spin_{T^*}(j) & = & n_l(h_l - h_s + (1-\varepsilon) - 1) + d_l,
    \label{e:nondistinct.letters.j}
  \end{eqnarray}
  where $\varepsilon$ is $1$ if the shorter ribbons appear
  independently southeast of the longer (equivalently, the larger
  entry abuts the shorter from the north), and $0$ if the shorter
  ribbons appear independently northwest of the longer (equivalently,
  the smaller entry abuts the larger from the west). Noting the
  equality between \eqref{e:distinct.letters.i} plus
  \eqref{e:distinct.letters.j} and \eqref{e:nondistinct.letters.i}
  plus \eqref{e:nondistinct.letters.j} shows $\spin$ is once again
  preserved. This also handles the case when $\vp_i$ acts by
  $\basic_{i,j}\basic_{i,h}$.

  Consider now the case when $\vp_i$ acts by $\double_{i,j}^{h}$. For
  this case, we may assume, from the analysis in
  Theorem~\ref{thm:phi}, that some $h$-ribbon and $j$-ribbon share a
  head or tail. Also from Theorem~\ref{thm:phi}, some $h$-ribbon must
  appear independently of all $i$-ribbons. Furthermore, by
  Proposition~\ref{prop:stack}, if $i$- or $j$-ribbons appear
  independently of the other, then they do so on the opposite side of
  abutting $i$- and $j$-ribbons from $h$-ribbons. Supposing first that
  the combined lengths of an $i$-ribbon plus a $j$-ribbon is less than
  $n$, reading from northwest to southeast or from southeast to
  northwest one sees isolated $h$-ribbons followed by abutting $i$-
  and $j$-ribbons nesting $h$-ribbons. There are then three options
  for what follows: isolated $j$-ribbons; isolated abutting $i$- and
  $h$-ribbons; or no further $i$-, $j$- or $h$-ribbons.  For example,
  see Figure~\ref{fig:spin.ex}. Note that, in particular, $S^*$ has
  isolated $j$-ribbons if and only if $\double_{i,j}^{h}(S^*)$ has
  isolated abutting $i$- and $h$-ribbons.

  \begin{figure}[ht]
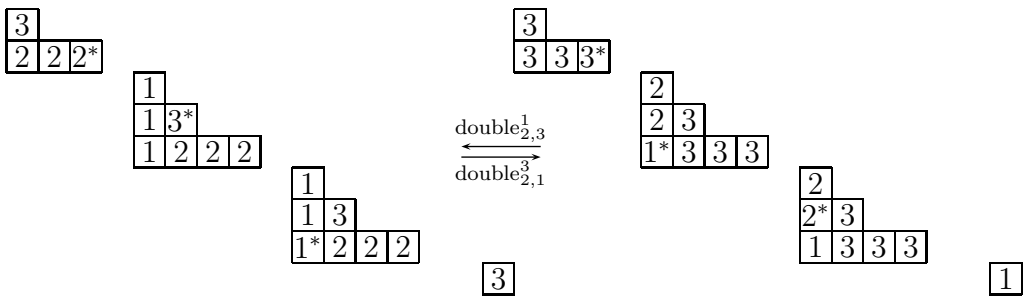

    \begin{center}
      \begin{displaymath}
        \rnode{left}{%
        \tableau{3 \\
          2 & 2 & 2^*\\
          & & &   & 1 \\
          & & &   & 1 & 3^*\\
          & & &   & 1 & 2 & 2 & 2\\
          & & &   &   &   &   & & & 1\\
          & & &   &   &   &   & & & 1 & 3\\
          & & &   &   &   &   & & & 1^* & 2 & 2 & 2\\
          & & &   &   &   &   & & &   &   &   &  & & & 3}}
        \rnode{right}{%
        \tableau{3 \\
          3 & 3 & 3^*\\
          & & &   & 2 \\
          & & &   & 2 & 3\\
          & & &   & 1^* & 3 & 3 & 3\\ 
          & & &   &   &   &   & & & 2\\
          & & &   &   &   &   & & & 2^* & 3\\
          & & &   &   &   &   & & & 1 & 3 & 3 & 3\\ 
          & & & & & & & & & & & & & & & 1}}
        \psset{nodesep=10pt,linewidth=.1ex,offset=4pt}
        \everypsbox{\scriptstyle}
        \ncline[nodesepA=10pt,nodesepB=20pt,offset=2pt]{->} {left}{right}%
        \naput{\double_{2,1}^{3}}
        \ncline[nodesepA=20pt,nodesepB=10pt,offset=2pt]{->} {right}{left}%
        \naput{\double_{2,3}^{1}}
      \end{displaymath}
      \caption{\label{fig:spin.ex}An example of $\vp_i$ acting via
        $\vp_i=\double_{i,j}^{h}$.}  \end{center}
  \end{figure}

  To assess the contributions to $\spin$, assume that $S^*$ has no
  isolated abutting $i$- and $h$-ribbons, as in the right hand side of
  Figure~\ref{fig:spin.ex}. Let $h_w$ and $d_w$ denote the height and
  depth of the starred $h$-ribbon, respectively, and let $n_w$ denote
  the number of isolated witness ribbons. For the example, we have
  $n_w = 1$, $h_w = 1$, $d_w = 0$. Let $n_l$ be the number of
  $i$-ribbons, each with height $h_l$ and the starred one with depth
  $d_l$. Let $n_s$ be the number of isolated $j$-ribbons, and let
  $h_s$ and $d_s$ denote the height and depth, respectively, of
  $j$-ribbons. For the example, we have $n_l = 2$, $h_l = 2$, $d_l =
  1$ and $n_s=1$, $h_s = 2$, $d_s=0$.

  The contribution to $\spin$ from $i-1,i,i+1$ in $S^*$, where $S^*$
  has no isolated abutting $i$- and $h$-ribbons, is given by
  \begin{eqnarray} 
    \spin_{S^*}(h) & = & (n_l + n_w) (h_w - 1) + d_w , \\
    \spin_{S^*}(i) & = & n_l (h_l - 1) + d_l , \\ 
    \spin_{S^*}(j) & = & (n_l + n_s) (h_s - 1) + d_s .
  \end{eqnarray}
  Following the description of how $\double_{i,j}^{h}$ acts on these
  ribbons, we may similarly compute the contributions of $i-1,i,i+1$
  to the $\spin$ of $T^{*}=\double_{i,j}^{h}(S^*)$. With $h$ and $j$
  defined relative to $T^*$, we have
  \begin{eqnarray} 
    \spin_{T^*}(h) & = & (n_l + n_w + n_s)(h_w - 1) + (d_w + \varepsilon n_s),\\
    \spin_{T^*}(i) & = & (n_l + n_s)(h_s - h_w + (1-\varepsilon) - 1) + d_s,\\
    \spin_{T^*}(j) & = & n_l (h_l + h_w - (1-\varepsilon) - 1) + d_l, 
  \end{eqnarray}
  where, similar to before, $\varepsilon$ is $0$ if the witness
  originally existed only to the left of abutting $i$- and $j$-ribbons
  and $1$ otherwise. Adding the contributions in either case
  miraculously yields the same result, thereby showing that the spin
  statistic is preserved.

  If $i$-ribbons and $j$-ribbons have lengths adding to $n$, we regard
  the abutting $i$- and $j$-ribbons which together nest an $h$-ribbon
  as abutting pairs, and the leftover $\max(i,j)$-ribbon as
  isolated. For example, in Figure~\ref{fig:snakes}, we regard the
  left side as having $1$-ribbons abutting $2$-ribbons from the west
  with an isolated abutting $2$-ribbon and $3$-ribbon to the
  northwest, and the right side we regard as having $2$-ribbons
  abutting $3$-ribbons from the west with an isolated $3$-ribbon to
  the northwest. That is to say, Figure~\ref{fig:snakes} is the same
  as Figure~\ref{fig:spin.ex} for the purposes of calculating spin. In
  this case, note that $S^*$ has no isolated abutting $i$- and
  $h$-ribbons precisely when  $h=i-1$. Moreover, in this case we
  always have $n_s = 1$. With this alteration, the analysis of spin is
  precisely as before, again showing that spin is preserved.

  Finally, if $\vp_i$ acts by $\snake_{i,j}^{h}$, then the difference
  $\spin(S^{*})-\spin(\vp_{i}(S^{*}))$ only depends on the change in
  depth for $h^{*},i^{*},j^{*}$ since both $S^{*}$ and $\vp_{i}(S^{*})$
  have the same underlying strong tableau.  Furthermore, we have that
  $i-1$-ribbons and $i+1$-ribbons both have length $n$ minus the
  length of an $i$-ribbon. In this case, there is one more
  $i+1$-ribbon than $i$-ribbon and one more $i$-ribbon than
  $i-1$-ribbon.  Using the intuitive definition of $\snake_{i,j}^{h}$
  following \eqref{e:snake} we see that moving the witness from $i-1$
  to $i+1$ increases the depth of the witness by one, and similarly
  moving from $i+1$ to $i-1$ decreases the depth by one. As the stars
  on $i$ and $j$ move in or out along their respective ribbons, one
  star necessarily moves to an abutting ribbon joined on an east/west
  edge and the other star moves to abutting ribbon joined on a
  north/south edge.  Moving a star across a north/south edge will not
  change the depth of the star, but moving a star across an east or
  west edge will increase or decrease the depth by one, respectively,
  canceling the contribution from moving the witness.  Therefore the
  total contribution to $\spin$ from $i-1,i,i+1$ remains the same
  after applying $\snake_{i,j}^{h}$. All cases are now covered.
\end{proof}

The results in Theorem~\ref{thm:phi} and Proposition~\ref{prop:spin}
naturally extend to skew partitions as well since the proofs only
involve intervals of rank 3 in the $n$-core poset.

\begin{corollary}\label{cor:phi.skew.cores}
  Let $\mu \subset \nu $ be $n$-cores of lengths $\ell(\mu)=p$ and
  $\ell(\nu)=q$. Then, for $p<i<q$, the map $\vp_i$ is a well-defined,
  spin preserving, involution on all starred strong skew tableaux for
  $\nu /\mu$. In particular, $\spin$ is constant on \emph{affine
    dual equivalence classes}.
\end{corollary}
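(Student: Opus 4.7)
The plan is to observe that every piece of the construction used in Theorem~\ref{thm:phi} and Proposition~\ref{prop:spin} is local in the chain, and then inherit the skew statement from the non-skew one verbatim. Concretely, the $i$-witness (Definition~\ref{def:witness}) and the recipe for $\vp_{i}$ (Definition~\ref{defn:phi}) depend only on the cells of the ribbons $S_{i-1}, S_{i}, S_{i+1}$ and the positions of $(i-1)^{*}, i^{*}, (i+1)^{*}$; equivalently, on the rank-three interval $[\lambda^{(i-2)}, \lambda^{(i+1)}]$ of the $n$-core poset together with the three starred ribbons. None of this data sees what happens below $\lambda^{(i-2)}$ or above $\lambda^{(i+1)}$ in the chain.

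First, I would recast a skew starred strong tableau $S^{*} \in \SST(\nu/\mu, n)$ as a saturated chain $\mu = \lambda^{(p)} \subset \lambda^{(p+1)} \subset \dotsb \subset \lambda^{(q)} = \nu$ in the $n$-core poset together with a content vector $c^{*} = (c_{p+1}, \dotsc, c_{q})$, exactly as in Section~\ref{sub:starred.strong.tableaux}. For $i$ in the allowed range so that $(i-1)^{*}, i^{*}, (i+1)^{*}$ are all present and the interval $[\lambda^{(i-2)}, \lambda^{(i+1)}]$ is contained in $[\mu,\nu]$, the map $\vp_{i}$ of Definition~\ref{defn:phi} modifies only $\lambda^{(i-1)}$, $\lambda^{(i)}$, or both (via $\basic$, $\snake$, $\double$, or $\starswap$), and it leaves $\lambda^{(p)}, \dotsc, \lambda^{(i-2)}$ and $\lambda^{(i+1)}, \dotsc, \lambda^{(q)}$ untouched. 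Hence the output is again a saturated chain from $\mu$ to $\nu$.

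Next, I would invoke the rank-two interval classification of Section~\ref{sec:poset} (in particular Proposition~\ref{p:rod.cover}, Corollary~\ref{cor:iribbons}, Propositions~\ref{prop:stack} and~\ref{prop:chains}, and the summary Table~\ref{table:length.two}), which describes the possible types of length-two intervals purely in terms of the abacus data at the given ranks. The case analysis in the proof of Theorem~\ref{thm:phi} uses only this abacus data and the local ribbon shapes, so each case goes through word-for-word in the skew setting to prove that $\vp_{i}$ is well-defined and that $\vp_{i}^{2} = \mathrm{id}$. In the same way, the bookkeeping in the proof of Proposition~\ref{prop:spin} only involves the contributions to $\spin$ coming from $n(i-1), n(i), n(i+1), h(i-1), h(i), h(i+1)$ and the depths $d((i-1)^{*}), d(i^{*}), d((i+1)^{*})$; each of these is determined by, and changes only according to, the same local rank-three data. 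Therefore $\spin(\vp_{i}(S^{*})) = \spin(S^{*})$ in the skew case as well.

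Finally, since affine dual equivalence classes are by definition the connected components of the graph whose edges come from the involutions $\vp_{i}$, and every generator preserves $\spin$, $\spin$ is constant on each affine dual equivalence class. The only potential obstacle is verifying that no argument in the proofs of Theorem~\ref{thm:phi} or Proposition~\ref{prop:spin} covertly uses that the chain begins at $\emptyset$; inspection shows that every invocation is in fact a statement about a rank-three subinterval and its surrounding ribbons, so this concern is unfounded.
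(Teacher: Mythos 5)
Your proposal is correct and follows the same reasoning as the paper, which disposes of the corollary in a single sentence by noting that the proofs of Theorem~\ref{thm:phi} and Proposition~\ref{prop:spin} only involve rank-three intervals in the $n$-core poset. You spell out the locality argument in more detail, but the underlying idea is identical.
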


\section{A graph on starred strong tableaux}
\label{sec:graph}

In this section, we construct a vertex-signed, edge-colored graph from
our elementary affine dual equivalence map $\vp_i$. The main goal of
this section is to show that this graph is, in fact, an $\lsp_{2}$
graph by Definition~\ref{defn:lsp}.  In order to establish this, we
introduce two operations on starred strong tableaux which together
show that there are only finitely many isomorphism types for 2-colored
connected components.  The reduction to finitely many isomorphism
types can be viewed as an (incomplete) analog of the jeu da taquin
algorithm for starred strong tableaux.  This analogy is summarized in
Remark~\ref{r:jdt}.

\begin{definition}\label{def:Gnu}
  For an $n$-core $\nu$, the \textit{affine dual equivalence graph}
$\G_{\nu}^{(n)}$ is the signed, colored graph with vertex set $V_{\nu}$
given by the set of all starred strong tableaux $S^{*}$ of shape
$\nu$, with signature function $\sigma(S^{*})$ obtained from the
reading word on the starred letters in $S^{*}$, and for each
$1<i<\ell(\nu)$, the set of $i$-colored edges, $E_i$, is the set of
all pairs $\{S^{*},\vp_i(S^{*})\}$ such that $S^{*} \neq
\vp_i(S^{*})$.  This definition also extends to skew shapes $\nu /\mu$
in the $n$-core poset.    For $S^{*} \in \SST(\nu/\mu, n)$, let $[S^{*}]$ denote the
\textit{connected component of the affine dual equivalence graph}
$\G_{\nu /\mu}^{(n)}$ \textit{containing} $S^{*}$.
\end{definition}

For example, for $n=3$ and $\mu =(5,3,1)$ the affine dual equivalence
graph is shown on page~\pageref{fig:interval531}.

Recall that $\vp_i$ is an involution which preserves the spin
statistic by Corollary~\ref{cor:phi.skew.cores}. In order to justify
our terminology of affine dual equivalence.  We want to prove that the
graph induced by these involutions satisfies Axioms 1,2,3,5 from
Definition~\ref{defn:deg} and local Schur positivity on all two
adjacent colored connected components.  Thus each affine dual
equivalence graph is $\lsp_{2}$.  The key
will be reducing local Schur positivity to a finite verification.  The
reduction is achieved with the help of flattening rows and squashing
and/or cloning columns.

\subsection{The flattening map}
\label{sec:flattening.map} 

Here we define an iterative procedure to flatten an $n$-core
partition down to an $m$-core partition for any $1\leq m<n$. We will
extend this procedure to starred strong tableaux in a way that
commutes with the affine dual equivalence involutions.

\begin{definition}\label{defn:flattening.cores}
  For any $m+1$-core $\lambda$ and any $1 \leq d \leq m+1$, define
  $\lambda^{(d)}$ to be the unique partition associated to the binary
  string obtained by removing all beads and spacers with content
  congruent to $d$ modulo $m+1$ from the abacus of $\lambda$. In
  particular, $\lambda^{(d)}$ is an $m$-core.
\end{definition}

We note that the above definition makes sense in light of
Remark~\ref{rmk:bipart} and the characterization of $n$-cores in terms
of the $n$-rod abacus. For example, regarding $(7,4,4,2,2)$ as a
$4$-core, $(7,4,4,2,2)^{(2)}$ is the $3$-core $(6,4,2)$.
\begin{displaymath}
  \psset{unit=1em}
  \pspicture(0,0)(8,6)
  \psline(0,0)(0,6)
  \psline(1,0)(1,5)
  \psline(2,0)(2,5)
  \psline(3,0)(3,3)
  \psline(4,0)(4,3)
  \psline(5,0)(5,1)
  \psline(6,0)(6,1)
  \psline(7,0)(7,1)
  \psline(0,0)(8,0)
  \psline(0,1)(7,1)
  \psline(0,2)(4,2)
  \psline(0,3)(4,3)
  \psline(0,4)(2,4)
  \psline(0,5)(2,5)
  \rput(0,5.5){$\bullet$}
  \rput(2,4.5){$\bullet$}
  \rput(2,3.5){$\bullet$}
  \rput(4,2.5){$\bullet$}
  \rput(4,1.5){$\bullet$}
  \rput(7,0.5){$\bullet$}
  \rput(0.5,5){$\circ$}
  \rput(1.5,5){$\circ$}
  \rput(2.5,3){$\circ$}
  \rput(3.5,3){$\circ$}
  \rput(4.5,1){$\circ$}
  \rput(5.5,1){$\circ$}
  \rput(6.5,1){$\circ$}
  \rput(7.5,0){$\circ$}
  \rput(2.7,5.2){$\swarrow$}
  \rput(4.7,3.2){$\swarrow$}
  \rput(7.2,1.7){$\swarrow$}
  \endpspicture
  \hspace{2em} 
  \raisebox{2em}{$\leadsto$}
  \hspace{2em}
  \psset{unit=1em}
  \pspicture(0,0)(7,4)
  \psline(0,0)(0,4)
  \psline(1,0)(1,3)
  \psline(2,0)(2,3)
  \psline(3,0)(3,2)
  \psline(4,0)(4,2)
  \psline(5,0)(5,1)
  \psline(6,0)(6,1)
  \psline(0,0)(7,0)
  \psline(0,1)(6,1)
  \psline(0,2)(4,2)
  \psline(0,3)(2,3)
  \rput(0,3.5){$\bullet$}
  \rput(2,2.5){$\bullet$}
  \rput(4,1.5){$\bullet$}
  \rput(6,0.5){$\bullet$}
  \rput(0.5,3){$\circ$}
  \rput(1.5,3){$\circ$}
  \rput(2.5,2){$\circ$}
  \rput(3.5,2){$\circ$}
  \rput(4.5,1){$\circ$}
  \rput(5.5,1){$\circ$}
  \rput(6.5,0){$\circ$}
  \endpspicture
\end{displaymath}

\begin{remark}
  For  $n$-cores $\mu \subset \nu$, if some transposition sequence
  from $\mu$ to $\nu$ touches rod $d$ then every transposition
  sequence from $\mu$ to $\nu$ touches rod $d$. This follows from the
  observation that any saturated chain from $\mu$ to $\nu$ can be
  obtained from any other by some sequence of interval exchanges, none
  of which may change which rods are touched.
\label{rmk:omit.d}
\end{remark}

\begin{proposition}\label{prop:interval.reduction}
  Let $\mu \subset \nu$ be $m+1$-cores such that some (equivalently,
  every) transposition sequence from $\mu $ to $\nu $ does not touch
  rod $d$. Then the interval $[\mu , \nu ] $ in the $m+1$-core poset
  is isomorphic to $[\mu^{(d)} , \nu^{(d)} ]$ in the $m$-core poset.
  This isomorphism extends to a bijection on skew strong tableaux
  which preserves the number of $i$-ribbons for each $i$.\end{proposition}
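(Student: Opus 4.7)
The plan is to establish the isomorphism by tracking saturated chains via their transposition sequences, working in the abacus model throughout.

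First, I would observe that the hypothesis, combined with Remark~\ref{rmk:omit.d}, implies that \emph{no} transposition sequence from $\mu$ to $\nu$ touches rod $d$; in particular, rod $d$ has the same length in every $\lambda \in [\mu, \nu]$. Hence the flattening map $\lambda \mapsto \lambda^{(d)}$, which deletes all beads and spacers of content $\equiv d \pmod{m+1}$ from the abacus, is well-defined on the entire interval. It lands in $[\mu^{(d)}, \nu^{(d)}]$ in the $m$-core poset, since containment of partitions corresponds to pointwise containment of bead sets on the abacus, and this property survives the deletion of a single rod.

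Next, I would prove the map is a poset isomorphism by matching covering relations. A cover $\lambda \lessdot t_{r,s}\lambda$ in the $(m+1)$-core poset with $r, s \not\equiv d$ is characterized by Proposition~\ref{p:rod.cover}(3): rod $r$ is strictly longer than rod $s$, and no rod strictly between positions $r$ and $s$ on the abacus has length weakly between theirs. Removing rod $d$ does not alter the length of any remaining rod, and simply deletes one potential witness to the between-condition; consequently both inequalities survive, so $\lambda^{(d)} \lessdot (t_{r,s}\lambda)^{(d)}$ is a cover in the $m$-core poset, effected by the naturally reindexed transposition. For the inverse, reinsert rod $d$ at its common length throughout the chain; a symmetric appeal to Proposition~\ref{p:rod.cover}(3) shows this also carries covers to covers. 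The two constructions are mutually inverse by definition of the abacus model.

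For the extension to skew strong tableaux, I would use Corollary~\ref{cor:transposition.seq.bijection} to identify saturated chains with transposition sequences. The flattening sends a transposition sequence from $\mu$ to $\nu$ (none of whose terms touch rod $d$) to the corresponding transposition sequence on the $m$-core abacus. By Corollary~\ref{cor:iribbons}, the number of ribbons produced at step $i$ equals the (positive) difference in lengths between the two rods being exchanged; since these rod lengths are unchanged by the flattening, the number of $i$-ribbons in the skew strong tableau is preserved. This yields the desired bijection on skew strong tableaux.

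The main obstacle is verifying the covering-relation characterization cleanly in both directions under flattening, which requires careful tracking of which rods act as witnesses in Proposition~\ref{p:rod.cover}(3); once this is in place, the remainder is essentially bookkeeping via transposition sequences. I do not expect any subtlety regarding ribbon \emph{shapes} changing (since the content span $s-r$ generally decreases by one when $d$ lies strictly between $r$ and $s$), because the proposition only claims preservation of ribbon counts, not shapes.
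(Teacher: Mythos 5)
Your proposal is correct and follows essentially the same route as the paper's own proof: constant rod-$d$ length across the interval makes the flattening well-defined, covering relations are matched via Proposition~\ref{p:rod.cover}(3) since removing rod $d$ only deletes a potential witness and leaves the remaining rods' relative lengths intact, and ribbon counts are preserved by Corollary~\ref{cor:iribbons}. The one minor imprecision—stating that removing rod $d$ "does not alter the length of any remaining rod," when strictly the flattened abacus must be re-indexed so only \emph{relative} rod lengths are preserved—does not affect the argument, since Proposition~\ref{p:rod.cover}(3) only cares about relative lengths.
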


\begin{remark}
  Proposition~\ref{prop:interval.reduction} can be used in reverse:
  given $\mu <\nu $ in the $m$-core poset, we can lift the interval
  $[\mu, \nu ]$ to an isomorphic interval in the $m+1$-core posets
  with the same nice implications on strong tableaux.  This map is
  implemented by using the inverse procedure of adding in an extra rod
  between any two existing rods. This can be done precisely when the
  length of the inserted rod never has length weakly between the
  length of two interchanging rods, for instance, we may always take
  the rod to be longer than all other rods or shorter than all other
  rods.
\end{remark}

\begin{proof}
  Recall that exchanging rods in the $n$-rod abacus preserves the fact
  that the corresponding binary strings are balanced. Since the length
  of rod $d$ for each $m+1$-core $\lambda$ in the interval $[\mu,\nu]$
  is constant, the re-indexing for each $\lambda^{(d)}$ is the
  same. Further, since the covering relations in the $m+1$-core poset
  depend on rod $d$ only in the sense that it must not have 
  length weakly between that of the two exchanging rods, covering relations
  in mapping $[\mu,\nu]$ down to the $m$-core poset are preserved.
  Conversely, given any $m$-core $\gamma \in [\mu^{(d)},\nu^{(d)}]$,
  we can lift it to an $m+1$-core by reversing the procedure.  The
  reverse procedure also is injective and preserves containment order.
  Hence the intervals are isomorphic.


  The bijection on skew strong tableaux is obtained in the obvious
  way, by mapping the saturated chain
  \[
  S=(\mu =\mu_{0} \subset \mu_1 \subset \mu_2 \subset \cdots
  \subset \mu_k = \nu)
  \]
  to the chain 
  \[
  S^{(d)}=(\mu^{(d)} =\mu_{0}^{(d)} \subset \mu_1^{(d)} \subset
  \mu_2^{(d)} \subset \cdots \subset \mu_k^{(d)} = \nu^{(d)}).
  \]
  To see that this bijection preserves the number of $i$-ribbons,
  recall from Corollary~\ref{cor:iribbons} that the number of
  $i$-ribbons of a strong tableau is equal to the difference in length
  of the interchanging rods taking $\lambda_{i-1}$ to
  $\lambda_{i}$. Since the map from $m+1$-cores to $m$-cores preserves
  the relative lengths of all rods other than rod $d$, this number is
  clearly preserved.
\end{proof}

By Proposition~\ref{prop:interval.reduction}, the following map is
well defined.

\begin{definition}\label{def:flattening.ssts}
  Let $\mu \subset \nu$ be $m+1$-cores such that some (equivalently,
  every) transposition sequence from $\mu $ to $\nu $ does not touch
  rod $d$.  Define the \emph{flattening map}
  $$\fl_{d}:\SST(\nu/\mu,m+1) \longrightarrow \SST(\nu^{(d)}/\mu^{(d)},m)$$
  sending $S^{*} \in \SST(\nu/\mu,m+1)$ to the underlying strong
  tableau $S^{(d)}$ with the stars placed on each $i$-ribbon in such a
  way as to preserve the depth.
\end{definition}


Note that the flattening map does not, in general, preserve the spin
statistic because it can shorten the height of ribbons. 

\begin{proposition}\label{prop:flattening}
  Let $\mu \subset \nu$ be $m+1$-cores such that some transposition
  sequence from $\mu $ to $\nu $ does not touch rod $d$.  The
  flattening map $\fl_{d} : \SST(\nu/\mu,m+1) \longrightarrow
  \SST(\nu^{(d)}/\mu^{(d)},m)$ is a bijection preserving the signature
  of a starred strong tableau and it commutes with the involutions
  $\vp_{i}$ for all $1<i<\ell(\nu)-\ell(\mu)$.
\end{proposition}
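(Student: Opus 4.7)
The plan is to verify in turn that $\fl_d$ is a bijection, preserves signatures, and commutes with $\vp_i$, each time leveraging Proposition~\ref{prop:interval.reduction} and the rank-two analysis of Section~\ref{sec:poset-intervals}.

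For bijectivity, Proposition~\ref{prop:interval.reduction} already furnishes a bijection on underlying strong tableaux which preserves the number $n(i)$ of $i$-ribbons for every $i$. Since removing rod $d$ does not permute the surviving abacus positions, the northwest-to-southeast order of the $i$-ribbons is preserved, so selecting the star by its depth yields a well-defined inverse, and $\fl_d$ is a bijection. For signature preservation, I would represent $S^*$ by its transposition sequence via Corollary~\ref{cor:transposition.seq.bijection}, so that $c_{i^*} = s_i - 1$. The re-indexing induced by removing rod $d$ is strictly order preserving on positions not on rod $d$, hence the inequalities between the $c_{j^*}$ are preserved and $\sigma(\fl_d(S^*)) = \sigma(S^*)$.

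The heart of the proof is commutation with $\vp_i$, which I would handle by appealing to locality: the image $\vp_i(S^*)$ is determined by the rank-three interval $[\mu^{(i-2)}, \mu^{(i+1)}]$ together with the positions of $(i-1)^*, i^*, (i+1)^*$. By Proposition~\ref{prop:interval.reduction} this interval is isomorphic to the corresponding interval in the $m$-core poset under $\fl_d$, and the isomorphism preserves the type of every rank-two subinterval ($B_2$ or chain) as well as the disjoint/interleaving/nested/abutting classification of the $(i-1)$-, $i$-, and $(i+1)$-ribbons and of their connected components $B_{i-1}, B_i, B_{i+1}$ (Table~\ref{table:length.two}). Case by case in Definition~\ref{defn:phi}, each of $\basic$, $\snake$, $\double$ and $\starswap$ is then computed from data that is invariant under $\fl_d$, giving $\fl_d(\vp_i(S^*)) = \vp_i(\fl_d(S^*))$.

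The hard part will be verifying that the witness rule of Definition~\ref{def:witness} is invariant under $\fl_d$. Subcase (1) reduces immediately to order-preservation of contents. Subcase (2), where $c_{i-1} = c_{i+1}$ and the witness is chosen by comparing the lengths of the $(i-1)$- and $(i+1)$-ribbons, is more delicate: a rod-$d$ position strictly between the two indices of a transposition $t_{r,s}$ shortens the associated ribbons by one under flattening, so individual ribbon lengths are not preserved. The equality $c_{i-1} = c_{i+1}$ forces $s_{i-1} = s_{i+1} = s$, so the two transpositions share an upper rod and the potential rod-$d$ contribution is measured in intervals $(r_{i-1}, s)$ and $(r_{i+1}, s)$ with a common endpoint. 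Using the constraints imposed on $r_{i-1}$ and $r_{i+1}$ by the valid insertion of the middle transposition $t_{r_i, s_i}$ (analyzed via Section~\ref{sec:poset-intervals} and Table~\ref{table:length.two}), I would show that the rod-$d$ position, when present, lies in both intervals or in neither, so the two lengths change in lockstep and the witness survives flattening. Once this is verified, the case analysis in the preceding paragraph yields the commutation.
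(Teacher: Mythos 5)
Your overall structure matches the paper's own argument closely: Proposition~\ref{prop:interval.reduction} gives the bijection on underlying strong tableaux, order-preservation of the abacus re-indexing gives the signature claim, and locality of $\vp_i$ plus the interval isomorphism gives the commutation. You have correctly identified that the delicate point for commutation is the witness rule of Definition~\ref{defn:witness}, which the paper compresses into the single assertion that ``the relative order among the endpoints of the exchanging rods and the isomorphism type of the interval are preserved.''

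The one misstep is the plan for subcase~(2). The claim you propose to prove---that a rod-$d$ position either lies in both intervals $(r_{i-1},s)$ and $(r_{i+1},s)$ or in neither, so the two ribbon lengths ``change in lockstep''---is neither necessary nor evidently true: with $r_{i-1} < r_{i+1}$ nothing in the triple $t_{r_{i-1},s} \to t_{r_i,s_i} \to t_{r_{i+1},s}$ prevents the deleted content class from falling strictly between $r_{i-1}$ and $r_{i+1}$, which would shorten one ribbon but not the other. What actually suffices is the simpler observation you already used for the signature: the re-indexing of abacus positions induced by deleting rod $d$ is a strictly increasing bijection on the surviving positions. Since $s_{i-1}=s_{i+1}=s$ is fixed, this immediately gives that $r_{i-1}=r_{i+1}$ (equal lengths, hence subcase~2(a)) is preserved, that $r_{i-1}<r_{i+1}$ versus $r_{i-1}>r_{i+1}$ (identity of the longer ribbon in subcases~2(b),(c)) is preserved, and that the comparison of $c_{i-1}$ with $c_i$ is preserved. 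No lockstep statement is needed; strict monotonicity of the re-indexing already forces the witness to be invariant, after which the case dispatch in Definition~\ref{defn:phi} goes through exactly as you outline.
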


\begin{proof}
  To see $\fl_{d}$ preserves the signature $\sigma (S^{*})$, recall
  from Definition~\ref{defn:transposition.sequences} and
  Corollary~\ref{cor:transposition.seq.bijection} that the content of
  $i^{*}$ is determined by an excess bead on the longer rod in the
  $i$th exchange on the $n$-rod abacus.  Since the relative order
  among the beads on the abacus is unchanged by the procedure in
  Definition~\ref{defn:flattening.cores}, the contents of
  $i^{*},(i+1)^{*}$ will form a decent in $\sigma(S^{*})$ if and only
  if there is a corresponding descent in
  $\sigma(\fl_{d}(S^{*}))$. This proves
  $\sigma(S^{*})=\sigma(\fl_{d}(S^{*}))$.

  To show $\fl_{d}(\vp_{i}(S^{*})) = \vp_{i}(\fl_{d}(S^{*}))$, simply
  note that the cases in the definition of $\vp_{i}$ depend only on the
  types of rod exchanges in the corresponding 3-interval of the $m+1$
  or $m$-core poset respectively.  But, the relative order among the
  endpoints of the exchanging rods and the isomorphism type of the
  interval are persevered by the flattening map.  Hence the flattening
  map and the involution commute.
\end{proof}

\begin{corollary}\label{cor:flat}
  Let $\mu \subset \nu$ be $n$-cores with $\nu$ lying $r$ ranks above
  $\mu$. Then for $m = 2r$, there exists $m$-cores $\widehat{\mu}
  \subset \widehat{\nu}$ such that
  there exists a bijection from $\SST(\nu/\mu,n)$ to
  $\SST(\widehat{\nu}/\widehat{\mu}, m)$ that preserves the signature
  and commutes with the involutions $\vp_{i}$ for all
  $1<i<\ell(\nu)-\ell(\mu)$.  Thus, the affine dual equivalence graphs of 
$\widehat{\nu}/\widehat{\mu}$ and $\nu/\mu$ are isomorphic as signed colored graphs. 

\end{corollary}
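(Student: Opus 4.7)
The plan is to combine Proposition~\ref{prop:flattening} with the inverse lifting procedure described in the remark following Proposition~\ref{prop:interval.reduction} to adjust the number of rods in the abacus to exactly $2r$. First I would identify the set $R \subseteq \mathbb{Z}/n\mathbb{Z}$ of residues that appear among the transpositions of some (equivalently, by Remark~\ref{rmk:omit.d}, of every) saturated chain from $\mu$ to $\nu$. Since $\nu$ lies $r$ ranks above $\mu$, every such chain consists of exactly $r$ covering relations, and by Corollary~\ref{cor:iribbons} each is labeled by a single affine transposition $t_{r_i,s_i}$ with $0 < s_i - r_i < n$, contributing at most the two residues $r_i \bmod n$ and $s_i \bmod n$ to $R$. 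Hence $|R|\le 2r$.

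Next I would flatten away every rod not in $R$. For each $d \notin R$, Proposition~\ref{prop:flattening} provides a signature-preserving bijection $\fl_d$ that commutes with every $\vp_i$ and lands in starred strong tableaux for an interval of $(n-1)$-cores. Iterating this map over all $n - |R|$ untouched residues yields $|R|$-cores $\mu', \nu'$ and a composite bijection $\SST(\nu/\mu, n) \longrightarrow \SST(\nu'/\mu', |R|)$ that preserves signatures and commutes with each $\vp_i$.

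If $|R| < 2r$, I would then iterate the reverse of the flattening procedure $2r - |R|$ times. At each step, I insert a new rod whose length strictly exceeds the lengths of all existing rods in the current abacus. By Proposition~\ref{p:rod.cover}(3), no rod of such extreme length can lie weakly between the lengths of two interchanging rods, so the inserted rod is untouched by every transposition in every saturated chain from the current bottom to the current top. The hypotheses of Proposition~\ref{prop:interval.reduction} read in reverse are therefore satisfied, and by the same argument as in Proposition~\ref{prop:flattening} applied in the opposite direction, the resulting bijection on starred strong tableaux preserves signatures and commutes with the $\vp_i$. Composing all bijections produced in the flattening and lifting stages yields the required $2r$-cores $\widehat{\mu} \subset \widehat{\nu}$ and the claimed isomorphism $\G_{\nu/\mu}^{(n)} \cong \G_{\widehat{\nu}/\widehat{\mu}}^{(2r)}$ of signed colored graphs.

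The main obstacle is verifying cleanly that the lifting step does what is advertised: inserting a rod of extreme length must extend $[\mu', \nu']$ to an \emph{isomorphic} interval in the $(|R|+1)$-core poset in a way that, passing to starred strong tableaux, preserves signatures and commutes with $\vp_i$. This is essentially the converse direction of Proposition~\ref{prop:flattening}; the crucial input is that the length condition in Proposition~\ref{p:rod.cover}(3) is vacuous for any rod much longer than every other rod, so all covering relations in the original interval lift without modification and no new covering relations are introduced. Once this is in place, iterating produces the desired $2r$-cores and the proof is complete.
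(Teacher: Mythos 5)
Your proof is correct and is essentially the argument the paper intends (the paper states this as a corollary of Proposition~\ref{prop:flattening} and relies on the lifting observation in the remark following Proposition~\ref{prop:interval.reduction}). The bound $|R|\le 2r$ from the $r$ covering relations, flattening away the untouched rods, and then inserting $2r-|R|$ extreme-length rods is exactly right; note that the lifting step requires no new argument since it is literally the inverse of a flattening map $\fl_d$, and the inverse of a signature-preserving, $\vp_i$-commuting bijection automatically has those same properties.
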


\subsection{The cloning map}
\label{sec:graph-cloning} 

Whereas flattening removes rows of the abacus, cloning adds
columns. Analogous to flattening, we will define cloning on starred
strong tableaux so that it preserves the signatures.  In some cases,
cloning commutes with the affine dual equivalence operators $\vp_i$.

\begin{definition}\label{defn:cloning.cores}
  For any $n$-core $\mu$, define $\mu_{(j)}$ to be the unique
partition associated to the abacus obtained by \emph{cloning} the
column of the $n$-rod abacus of $\mu$ containing positions $j,
j+1, \ldots, j+n-1$.  Specifically, let $\beta$ be the binary string
encoding the abacus for $\mu$.  Then $\mu_{(j)}$ is the abacus
associated to the string obtained from $\beta$ by inserting a copy of
the substring $\beta_{j},\beta _{j+1}, \ldots, \beta_{j+n-1}$ into the
abacus for $\mu$ between positions $j-1$ and $j$. 
\end{definition}

Cloning a column has the effect of extending some of the rods in the
$n$-rod abacus, hence $\mu_{(j)}$ is also an $n$-core.  To see the
effect of cloning on partitions, consider taking $(5,2,2)$ regarded as
a $4$-core and cloning the column beginning with content $0$.  This gives
$(5,2,2)_{(0)} = (7,4,4,2,2)$, as depicted below.

\begin{displaymath}
  \psset{unit=1em}
  \pspicture(0,0)(6,4)
  \psline(0,0)(0,4)
  \psline(1,0)(1,3)
  \psline(2,0)(2,3)
  \psline(3,0)(3,1)
  \psline(4,0)(4,1)
  \psline(5,0)(5,1)
  \psline(0,0)(6,0)
  \psline(0,1)(5,1)
  \psline(0,2)(2,2)
  \psline(0,3)(2,3)
  \rput(0,3.5){$\bullet$}
  \rput(2,2.5){$\bullet$}
  \rput(2,1.5){$\bullet$}
  \rput(5,0.5){$\bullet$}
  \rput(0.5,3){$\circ$}
  \rput(1.5,3){$\circ$}
  \rput(2.5,1){$\circ$}
  \rput(3.5,1){$\circ$}
  \rput(4.5,1){$\circ$}
  \rput(5.5,0){$\circ$}
  \rput(2.8,3.3){$\swarrow$}
  \rput(2.8,2.3){$\swarrow$}
  \rput(3.3,1.8){$\swarrow$}
  \rput(4.3,1.8){$\swarrow$}
  \endpspicture
  \hspace{2em} 
  \raisebox{2em}{$\leadsto$}
  \hspace{2em}
  \psset{unit=1em}
  \pspicture(0,0)(7,5)
  \psline(0,0)(0,6)
  \psline(1,0)(1,5)
  \psline(2,0)(2,5)
  \psline(3,0)(3,3)
  \psline(4,0)(4,3)
  \psline(5,0)(5,1)
  \psline(6,0)(6,1)
  \psline(7,0)(7,1)
  \psline(0,0)(8,0)
  \psline(0,1)(7,1)
  \psline(0,2)(4,2)
  \psline(0,3)(4,3)
  \psline(0,4)(2,4)
  \psline(0,5)(2,5)
  \rput(0,5.5){$\bullet$}
  \rput(2,4.5){$\bullet$}
  \rput(2,3.5){$\bullet$}
  \rput(4,2.5){$\bullet$}
  \rput(4,1.5){$\bullet$}
  \rput(7,0.5){$\bullet$}
  \rput(0.5,5){$\circ$}
  \rput(1.5,5){$\circ$}
  \rput(2.5,3){$\circ$}
  \rput(3.5,3){$\circ$}
  \rput(4.5,1){$\circ$}
  \rput(5.5,1){$\circ$}
  \rput(6.5,1){$\circ$}
  \rput(7.5,0){$\circ$}
  \endpspicture
\end{displaymath}
Observe that for fixed $\mu$, different values for $j$ can lead to the
same $n$-core $\mu_{(j)}$. For instance, taking any $j \in \{-4,\ldots,0
\}$ results in $(5,2,2)_{(j)} = (7,4,4,2,2)$.

In order for flattening to preserve a covering relation in the
$n$-core poset, the transposition sequence simply needs to avoid the
rod being removed. The situation for cloning is more subtle. Covering
relations are not always preserved even when the replicated column is
disjoint from the indexing transposition. It is immediate from
Proposition~\ref{p:rod.cover} that if $t_{r,s} \mu > \mu$ is a
covering relation in the $n$-core poset, then $(t_{r,s} \mu)_{(j)}$
covers $\mu_{(j)}$ in the $n$-core poset if and and only if for every
$r < h \leq s$ the relative order of the lengths of rods $h,r$ and $s$
is the same in both $\mu$ and $\mu_{(j)}$.  We call such a $j$ a
\emph{cloneable index} for $\mu \subset t_{r,s} \mu$.  More generally,
$j$ is a \emph{cloneable index} for the interval $[\mu ,\nu ]$
provided cloning the column beginning at $j$ of every core partition
in the interval results in another isomorphic interval in the $n$-core
poset.  This happens if and only if no rod in the n-rod abacus
representing any element in the interval has a rightmost bead of
content $j,j+1,\dotsc , j+n-1$.  Similarly, we say $j$ is a
\emph{cloneable index} for $S^{*} \in \SST(\nu /\mu,n) $ provided $j$
is a cloneable index for $[\mu ,\nu ]$.  The clone of $S^{*}$, denoted
$\mathrm{cl}_{j}(S^{*})$, is defined to be the saturated chain
obtained from $S$ by cloning, the column beginning with $j$ in each
$n$-rod abacus in the chain and leaving all the stars with content
less than $j$ at the same depth and increasing the depth by 1 for 
all stars with content at least $j$.  Note, all $i$-ribbons will have
the same shape in $S^{*}$ and $\mathrm{cl}_{j}(S^{*})$ since the
relative order of the rod lengths is unchanged by cloning a column.
See Figure~\ref{fig:squash} for example.

\begin{figure}[ht]
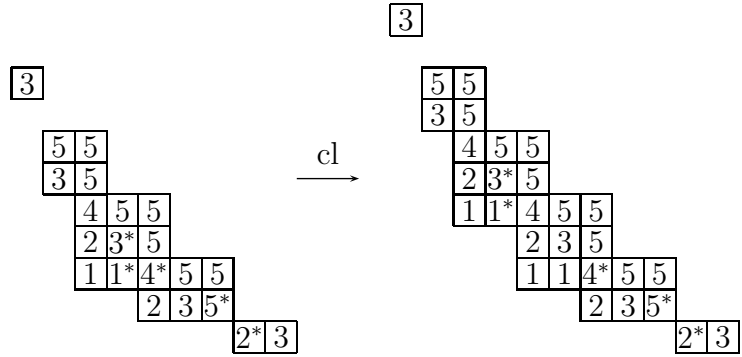

\begin{displaymath}
  \rnode{dolly}{%
      \tableau{%
        \\
        \\
        3 \\
        & \\
        & 5 & 5 \\
        & 3 & 5 \\
        &   & 4 & 5 & 5 \\
        &   & 2 & 3^* & 5 \\
        &   & 1 & 1^* & 4^* & 5 & 5 \\
        &   &   &   & 2 & 3 & 5^* \\
        &   &   &   &   &   &   & 2^* & 3}
  }
  \hspace{3em}
  \rnode{clone}{%
    \tableau{%
      3 \\
      & \\
      & 5 & 5 \\
      & 3 & 5 \\
      &   & 4 & 5 & 5 \\
      &   & 2 & 3^* & 5 \\
      &   & 1 & 1^* & 4 & 5 & 5 \\
      &   &   &   & 2 & 3 & 5 \\
      &   &   &   & 1 & 1 & 4^* & 5 & 5 \\
      &   &   &   &   &   & 2 & 3 & 5^* \\
      &   &   &   &   &   &   &   &   & 2^* & 3}
  }
  \psset{nodesep=0pt,linewidth=.1ex}
  \ncline[nodesepA=0em,nodesepB=1em]{->} {dolly}{clone} \naput{\displaystyle\cl}
\end{displaymath}
\caption{\label{fig:squash} An example of the cloning map on a starred
  strong tableau.}
\end{figure}

Observe that if $S^{*} \in \SST(\nu/\mu, n)$ and $j$ is a cloneable
index for $S^{*}$, then $j$ is a cloneable index for every other
starred strong tableau in $\SST(\nu/\mu, n)$ as well since the
definition of a cloneable index only depends on the interval $[\mu , \nu ]$.  

\begin{definition}\label{defn:squashing}
  Assume that $S^{*}$ has a cloneable index at $j$ and that
$T^{*}=\mathrm{cl}_{j}(S^{*}) \in \SST(\beta /\alpha ,n)$.  Define the \emph{cloning map} on
components
$$\mathrm{cl}_{j}:[S^{*}] \longrightarrow  \SST(\beta /\alpha ,n) 
$$ 
by cloning each starred strong tableaux in $[S^{*}]$ at the column beginning with $j$.  The
inverse map to cloning, when its defined, will be denoted by 
$$\mathrm{sq}_{j}:[T^{*}] \longrightarrow [S^{*}]
$$
and we call it the \textit{squashing map}.  
\end{definition}

As with the flattening map, the cloning map does not, in general,
preserve the spin statistic since it may alter the number of
$i$-ribbons and/or it may alter the depth of the starred
ribbons. Nonetheless, once the cloning map commutes with the $\vp_i$'s
on a connected component of an affine dual equivalence graph then we
can clone the same column any number of times and get an isomorphic
component.   

The following proposition is the analog of Proposition~\ref{prop:flattening}. 

\begin{proposition}\label{prop:cloning}
  Assume that $S^{*}\in \SST(\nu/\mu, n)$ has a cloneable index at $j$
and that $T^{*}=\mathrm{cl}_{j}(S^{*})$.  Further assume that cloning
the column beginning at $j$ commutes with the involutions $\vp_{i}$
for all $1<i<\ell(\nu)-\ell(\mu)$ on the component $[S^{*}]$.  Then
$j$ is a cloneable index for every starred strong tableaux in
$[T^{*}]$.  Moreover, if $U^{*}=\cl_{j}(T^{*})$, then $\cl_{j} :
[T^{*}] \longrightarrow [U^{*}]$ is a bijection preserving the
signature of each starred strong tableau and it commutes with the
involutions $\vp_{i}$ for all $1<i<\ell(\nu)-\ell(\mu)$.  Thus,
$[S^{*}] \approx [T^{*}] \approx [U^{*}]$ as signed, colored graphs.
\end{proposition}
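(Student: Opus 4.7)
The plan is to establish three claims in sequence: (a) $\cl_j$ is a signed, colored graph isomorphism $[S^*]\to [T^*]$; (b) $j$ remains a cloneable index for every starred strong tableau in $[T^*]$; and (c) $\cl_j$ commutes with each $\vp_i$ on all of $[T^*]$. Once (c) is in hand, reapplying (a) to the map $\cl_j:[T^*]\to [U^*]$ completes the chain $[S^*]\approx [T^*]\approx [U^*]$ of signed, colored graph isomorphisms.

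For (a), I would first observe that $\cl_j$ is inverted by the squashing map $\sq_j$, yielding a bijection of vertex sets. Signature preservation follows by the argument of Proposition~\ref{prop:flattening}: by Corollary~\ref{cor:transposition.seq.bijection} the content of each $i^*$ is determined by the position of an excess bead on the corresponding rod of the abacus, and the relative order of these excess beads across $i$ is preserved under insertion of a duplicate column, since content indices $\geq j$ merely shift uniformly. Combined with the hypothesis that $\cl_j$ commutes with $\vp_i$ on $[S^*]$, this gives the first graph isomorphism.

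For (b), cloneability of $j$ depends only on the underlying $n$-core interval and asks that no rod have its rightmost bead at a position in $\{j,\dotsc,j+n-1\}$ in any abacus of the interval. In the abacus for any element of $[T^*]$, each bead lying at a position in $\{j,\dotsc,j+n-1\}$ is the duplicate of a bead that in the corresponding element of $[S^*]$ was not the rightmost on its rod; the original rightmost bead of that rod, shifted to a position $\geq j+n$ in $T^*$ if it originally sat at content $\geq j$ and otherwise unchanged, stays outside the duplicated column. Hence the cloneability of $j$ is inherited by the interval for $T^*$, and by the remark before Definition~\ref{defn:squashing} this is equivalent to cloneability at every element of $[T^*]$.

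The main obstacle is (c). The action of $\vp_i$ on a starred strong tableau is determined, via Definition~\ref{defn:phi}, by local data at the rank-3 interval about positions $i-1,i,i+1$: the residues of the exchanging rods, their relative length comparisons as classified in Table~\ref{table:length.two}, the abutting/nesting pattern of the $(i-1)$-, $i$- and $(i+1)$-ribbons, and the contents of $(i-1)^*,i^*,(i+1)^*$. The commutation hypothesis on $[S^*]$ says precisely that the column at $j$ is compatible with this local data for every $i$ and every element of $[S^*]$: the witness, the case selection in Definition~\ref{defn:phi}, and the prescribed ribbon/star movements all survive the cloning unchanged. Since the column at $j$ in $T^*$ is a verbatim copy of the column at $j$ in $S^*$, and the local 3-interval data at every element of $[T^*]$ is the $\cl_j$-image of that at its preimage in $[S^*]$, the identical compatibility persists when cloning $T^*$. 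Rigorously this requires a case check through the four ribbon-swap types (basic, snake, double, star) verifying that case selection and star movement are invariant under iterating the clone; but in each case the controlling invariants (rod residues and length orderings, nesting/abutting patterns, and star depths) transfer from $T^*$ to $U^*$ in exactly the same way they transferred from $S^*$ to $T^*$, so the verification reduces to re-applying the hypothesis.
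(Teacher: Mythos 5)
Your proof is essentially correct and follows the same approach as the paper: establish cloneability of $j$ for $[T^*]$ via the rod-length characterization, then argue that $\vp_i$ commutes with $\cl_j:[T^*]\to[U^*]$ because the rank-3 interval data relevant to Definition~\ref{defn:phi} is invariant under inserting or removing $n$ consecutive content diagonals that contain no head or tail of a starred ribbon and are a verbatim $n$-translate copy of the adjacent diagonals. One small caveat on exposition: the final phrase ``the verification reduces to re-applying the hypothesis'' is imprecise, since the hypothesis concerns $[S^*]$ and does not formally yield the commutation on $[T^*]$; the correct justification is the structural invariance you state earlier in the same paragraph (which is exactly what the paper's proof relies on), not a literal re-application of the $[S^*]$-hypothesis.
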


\begin{proof}
The fact that $j$ is again a cloneable index for $T^{*}$ follows
directly from the characterization of $j$ being a cloneable index for
the interval containing $S^{*}$ in terms of rod lengths.

To see that $[T^{*}]$ is isomorphic to $[U^{*}]$ as signed colored
graphs, one must check that the affine dual equivalence maps
$\varphi_{i}$ commute with the cloning map from $[T^{*}]$ to
$[U^{*}]$.  This follows since the conditions for the affine dual
equivalence map on rank 3 intervals are unchanged at each step by
removing $n$ consecutive content diagonals that contains no head or
tail of a starred ribbon in any of the starred strong tableaux in
$[U^{*}]$ and that the cells in those $n$ diagonals must necessarily
be a copy of the next $n$-translate down.
\end{proof}

\begin{remark}\label{r:jdt} 
As a consequence of Proposition~\ref{prop:flattening} and
Proposition~\ref{prop:cloning}, we observe that the process of
flattening and squashing a component in an affine dual equivalence
graph as much as possible is similar to applying the necessary jeu da
taquin slides which bring together all of the connected components in
a skew tableaux by removing empty rows and columns.  Note, both
flattening and cloning/squashing can change the spin statistic even
when they commute with affine dual equivalence on a component.  Thus a
complete analog of jeu da taquin generalizing these moves would need
to keep track of powers of $t$ separately from the algorithm.
\end{remark}

\subsection{Local Schur positivity}\label{sub:lsp}

Our next goal is to show that there are only a small number of
isomorphism classes of connected components of rank $4$ affine dual
equivalence graphs.  Recall that a starred strong tableau $S^{*}$ on
an interval $[\mu ,\nu ]$ has ribbons labeled 1,2, \dots , $\ell(\nu
)-\ell(\mu )$.  We say $S^{*}$ has \textit{rank} $r$ provided
$r=\ell(\nu )-\ell(\mu )$.  The component $[S^{*}]$ of the affine dual
equivalence graph on $\nu /\mu$ has edges labeled 2,3,.., $r-1$ and
each vertex has a signature of length $r-1$.

\begin{lemma}\label{l:four.prime}
Let $S^{*} \in \SST_{n} (\nu /\mu )$ be a starred strong tableau of
rank $k=4$.  Then $[S^{*}]$ has a Schur positive generating function.
In fact, each such $[S^{*}]$ is either an isolated vertex, or a path with
either 2 or 4 edges with alternating color labels.  See Figure~\ref{fig:lambda4}.  
\end{lemma}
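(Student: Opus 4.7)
The plan is to combine a finite reduction via flattening with an axiomatic local analysis of signatures, then read off Schur positivity of each surviving component.

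First, I would apply Corollary~\ref{cor:flat} with $r=4$ and $m=2r=8$ to replace the original rank-$4$ skew $n$-core interval by an isomorphic rank-$4$ skew $8$-core interval, reducing the problem to finitely many intervals. Since the rank equals $4$, edges in any component are colored by $i\in\{2,3\}$ and every signature lies in $\{\pm\}^3$. I would next verify axioms (ax1), (ax2), (ax3), and (ax5) of Definition~\ref{defn:deg} for affine dual equivalence: axiom (ax1) is immediate from case (1) of Definition~\ref{defn:phi}, since $\vp_i$ fixes $T^*$ precisely when $i$ itself is the $i$-witness, i.e., when $\sigma_{i-1}(T^*)=\sigma_i(T^*)$; axioms (ax2) and (ax3) follow by tracking the content vector $c^*$ through each of the basic, snake, double, and star swaps defined in~Definition~\ref{defn:phi}; and (ax5) is vacuous since the only available colors satisfy $|2-3|<3$.

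These axioms partition the eight possible signatures into three classes: two isolated ($+++$ and $---$); four one-edge leaves ($-++,+--$ carrying only a $2$-edge, and $++-,--+$ carrying only a $3$-edge); and two interior signatures ($+-+$ and $-+-$) carrying both. Together with uniqueness in (ax1), every vertex has degree at most $2$, so each component is an alternating-color walk. Tracing from a leaf, (ax1)--(ax3) force $\vp_2(-++)$ to produce the unique interior signature $+-+$; from $+-+$ the map $\vp_3$ leads either to the leaf $++-$ (terminating the path after $2$ edges) or to the interior $-+-$; iterating this signature-trace argument shows that an alternating path from a leaf must terminate at a leaf of the appropriate parity within at most $4$ edges.

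The main obstacle is to exclude two shapes that are locally consistent with (ax1)--(ax5) but are not claimed in the lemma, namely a $3$-edge alternating path (such as $-++,+-+,-+-,+--$) and a $4$-edge alternating cycle. I would rule these out by direct computation on the finitely many rank-$4$ skew intervals in the $8$-core poset produced by Step~1, using the rod-exchange classification of Table~\ref{table:length.two} and the explicit definition of $\vp_i$ in Definition~\ref{defn:phi} to track how the third signature position evolves along each alternating path. Once this exclusion is in place, summing $Q_{\sigma(v)}$ over the vertices of each surviving shape yields $s_{(4)}$ or $s_{(1^4)}$ for an isolated vertex; one of $s_{(3,1)}$, $s_{(2,1,1)}$, or $s_{(2,2)}$ for a $2$-edge component (either a $3$-vertex simple path or the $2$-vertex multigraph with parallel $2$- and $3$-edges, analogous to $\G_{(3,1)}$, $\G_{(2,1,1)}$, and $\G_{(2,2)}$ respectively); and $s_{(3,1)}+s_{(2,2)}$ or $s_{(2,1,1)}+s_{(2,2)}$ for a $4$-edge simple path, by the identities $F_{(1,3)}+2F_{(2,2)}+F_{(1,2,1)}+F_{(3,1)}=s_{(3,1)}+s_{(2,2)}$ and its conjugate. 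Each is manifestly Schur positive, completing the proof.
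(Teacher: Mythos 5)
The paper does not actually give a self-contained proof of this lemma: it records two possible routes (a computer verification based on flattening \emph{and} squashing, referenced to an external website, and a reading-word argument referenced to another paper), so the comparison is necessarily against a sketch rather than a detailed argument. Your outline — reduce to finitely many cases, verify the local axioms, then classify components by tracing signatures — is in the same spirit as the paper's first route, and your partition of the eight signatures into isolated vertices, leaves, and interior vertices, along with the final Schur-function computations, is correct.

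There are, however, two genuine gaps. First, the finiteness reduction. Corollary~\ref{cor:flat} only flattens to the $m=8$-core poset; it does not bound the skew interval. In the $8$-core poset a covering relation $t_{r,s}$ adds a number of identical ribbons equal to the difference of the two rod lengths, which is unbounded, so there are still infinitely many rank-$4$ intervals after flattening. The missing ingredient is squashing (the inverse of cloning, Section~\ref{sec:graph-cloning}), and even there Proposition~\ref{prop:cloning} only guarantees commutation with the $\vp_i$ ``in some cases,'' which is exactly why the paper defers to an explicit computer check rather than a clean finiteness argument. Your phrase ``the finitely many rank-$4$ skew intervals in the $8$-core poset produced by Step~1'' is simply false as stated, and the direct computation in your final step has no finite domain over which to run.

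Second, the use of (ax$3$). Your signature-trace argument in the third paragraph is what rules out degree-$1$ components other than the four leaves and forces the observed alternating structure, and it relies on (ax$3$) for both colors. But in the paper's logical order (Theorem~\ref{thm:Dgraph}), (ax$3$) is \emph{derived from} Lemma~\ref{l:four.prime}, not established before it. You claim (ax$3$) ``follows by tracking the content vector $c^*$ through each of the basic, snake, double, and star swaps,'' which is plausible in principle, but you give no argument — and because $\cflop$, $\snake$, and $\double$ do not merely permute the content vector (they add $\pm r_p\,\varepsilon_p$ correction terms), the verification is not the routine check that the phrasing suggests. Until (ax$3$) is proved independently for all cases of $\vp_i$, the signature trace rests on an assumption that the paper treats as a consequence of the lemma you are trying to prove.
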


\begin{figure}[ht]
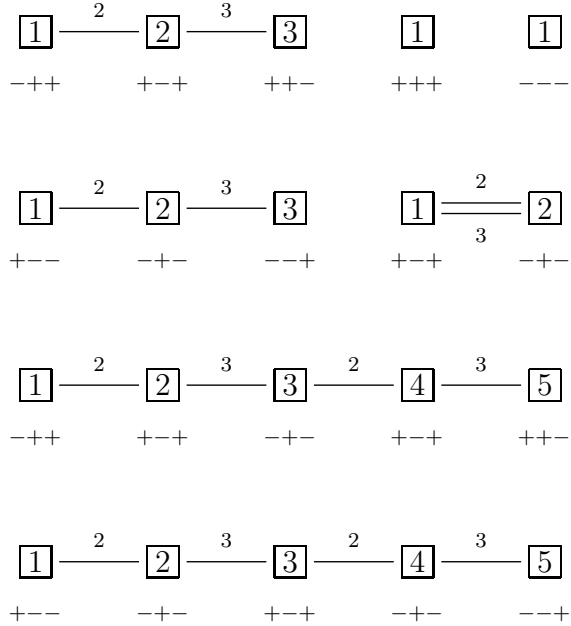

  \begin{displaymath}
       \begin{array}{ccccc}
         \stab{a}{1}{-++} \ & \
         \stab{b}{2}{+-+} \ & \
         \stab{c}{3}{++-} \ & \
         \stab{d}{1}{+++} \ & \
         \stab{e}{1}{---}
       \end{array} 
    \psset{nodesep=3pt,linewidth=.1ex}
    \everypsbox{\scriptstyle}
    \ncline            {a}{b} \naput{2}
    \ncline            {b}{c} \naput{3}
  \end{displaymath}\\[2\cellsize]
  \begin{displaymath}
       \begin{array}{ccccc}
         \stab{a}{1}{+--} \ & \
         \stab{b}{2}{-+-} \ & \
         \stab{c}{3}{--+} \ & \
         \stab{d}{1}{+-+} \ & \
         \stab{e}{2}{-+-}
       \end{array} 
    \psset{nodesep=3pt,linewidth=.1ex}
    \everypsbox{\scriptstyle}
    \ncline            {a}{b} \naput{2}
    \ncline            {b}{c} \naput{3}
    \ncline[offset=2pt]{d}{e} \naput{2}
    \ncline[offset=2pt]{e}{d} \naput{3}
  \end{displaymath}\\[2\cellsize]
  \begin{displaymath}
       \begin{array}{ccccc}
         \stab{a}{1}{-++} \ & \
         \stab{b}{2}{+-+} \ & \
         \stab{c}{3}{-+-} \ & \
         \stab{d}{4}{+-+} \ & \
         \stab{e}{5}{++-}
       \end{array} 
    \psset{nodesep=3pt,linewidth=.1ex}
    \everypsbox{\scriptstyle}
    \ncline            {a}{b} \naput{2}
    \ncline            {b}{c} \naput{3}
    \ncline            {c}{d} \naput{2}
    \ncline            {d}{e} \naput{3}
  \end{displaymath}\\[2\cellsize]
  \begin{displaymath}
       \begin{array}{ccccc}
         \stab{a}{1}{+--} \ & \
         \stab{b}{2}{-+-} \ & \
         \stab{c}{3}{+-+} \ & \
         \stab{d}{4}{-+-} \ & \
         \stab{e}{5}{--+}
       \end{array} 
    \psset{nodesep=3pt,linewidth=.1ex}
    \everypsbox{\scriptstyle}
    \ncline            {a}{b} \naput{2}
    \ncline            {b}{c} \naput{3}
    \ncline            {c}{d} \naput{2}
    \ncline            {d}{e} \naput{3}
  \end{displaymath}\\[2\cellsize]
  \caption{\label{fig:lambda4} All 7 possible isomorphism types of connected
    components of  affine dual equivalence graphs of rank $4$.}
\end{figure}

This lemma can be proved in two ways.  One approach is to do a
computer verification by identifying a set of dual equivalence classes
which contain all possible isomorphism types after flattening and
squashing as much as possible.  Details of this approach can be found
at \url{http://www.math.washington.edu/~billey/kschur/}.  The second
approach is based on the reading words of the starred strong tableaux,
see \cite{Assaf-proc}.

\begin{remark}
A computer exploration for all possible isomorphism types for affine
dual equivalence graphs of rank 5 is underway.  As of November of
2011, we have observed 326 distinct isomorphism types which can be
viewed in
\url{http://www.math.washington.edu/~billey/kschur/d-graphs-11-2011.pdf}.
Note for comparison, there are only 25 isomorphism types for rank 5 graphs for LLT
polynomials as defined in Section~\ref{sec:LLT}.
\end{remark}

\begin{theorem}
  For any pair of $n$-core partitions $\mu \subset \nu$, the affine
dual equivalence graph $\G_{\nu/\mu }^{(n)}$ is a \D graph which is locally
Schur positive for 2-colored edges and for which $\spin$ is constant on
connected components.  \label{thm:Dgraph}
\end{theorem}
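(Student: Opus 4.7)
The plan is to prove the three assertions of Theorem~\ref{thm:Dgraph} in the following order: spin constancy, the \D graph axioms, and finally local Schur positivity, which is the substantive content.

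Constancy of $\spin$ is immediate: every edge of $\G_{\nu/\mu}^{(n)}$ is induced by some involution $\vp_i$, and the involutions $\vp_i$ preserve $\spin$ by Corollary~\ref{cor:phi.skew.cores}, so $\spin$ is constant on any connected component. For the \D graph axioms (1), (2), (3) and (5) of Definition~\ref{defn:deg}, I would argue directly from the construction of $\vp_i$. Axiom 1 follows from Definitions~\ref{defn:phi} and~\ref{defn:witness}: $\vp_i$ fixes $S^*$ precisely when the $i$-witness equals $i$, equivalently when $c_i$ is the median of $\{c_{i-1},c_i,c_{i+1}\}$, which is the condition $\sigma_{i-1}(S^*) = \sigma_i(S^*)$; uniqueness of the $i$-neighbour is automatic since $\vp_i$ is single-valued. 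Axioms 2 and 3 follow by inspecting the five branches of equation~\eqref{eqn:phi} in turn: in every branch only the contents of $(i-1)^*$, $i^*$, $(i+1)^*$ change, so the signature is preserved outside positions $[i-2,i+1]$ and flips at position $i$, and the witness analysis carried out inside the proof of Theorem~\ref{thm:phi} supplies the descent conditions at positions $i-2$ and $i+1$ required by Axiom 3. Axiom 5 holds because when $|i-j|\geq 3$ the transpositions indexed by the $i$-triple $\{i-1,i,i+1\}$ and the $j$-triple $\{j-1,j,j+1\}$ in any transposition sequence from $\mu$ to $\nu$ act on disjoint blocks of ribbons, so $\vp_i$ and $\vp_j$ commute on both the underlying strong tableau and the content vector.

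For local Schur positivity, fix $2 < i < \ell(\nu)-\ell(\mu)$ and let $C$ be a connected component of $(V_{\nu/\mu},\sigma,E_{i-1}\cup E_i)$ with signatures restricted to positions $[i-2,i+1]$. The plan is to exhibit a signed coloured graph isomorphism from $C$ to a connected component of a small-rank affine dual equivalence graph to which Lemma~\ref{l:four.prime} applies. First, I would apply Corollary~\ref{cor:flat} to replace $\nu/\mu$ by a skew pair $\widehat\nu/\widehat\mu$ in a core poset of rank at most $2(\ell(\nu)-\ell(\mu))$; the flattening map is a signature- and $\vp_\bullet$-preserving isomorphism, so it transports $C$ to a corresponding $2$-coloured component $\widehat C$. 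Second, I would iteratively apply the squashing map $\sq_j$ of Definition~\ref{defn:squashing} at every cloneable index $j$ that lies outside the window governed by $\vp_{i-1}$ and $\vp_i$; Proposition~\ref{prop:cloning} guarantees that each such squash induces a signed coloured isomorphism on $\widehat C$. The outcome is a $2$-coloured component whose non-central ribbons are pinned close together and play the role of inert spectators for $\vp_{i-1}$ and $\vp_i$, so that the signed coloured structure of $\widehat C$ (and hence of $C$) agrees with that of a connected component of a rank-$4$ affine dual equivalence graph. Lemma~\ref{l:four.prime} then identifies this component as one of the seven isomorphism types of Figure~\ref{fig:lambda4}, each of which is symmetric and Schur positive, establishing $\lsp_2$.

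The principal obstacle is the reduction step itself: proving that after exhausting all admissible flattenings and squashes, the $2$-coloured component collapses to a configuration already covered by the rank-$4$ classification. This rests on the observation, implicit throughout Sections~\ref{sec:flattening.map} and~\ref{sec:graph-cloning}, that the edges in $E_{i-1}\cup E_i$ depend only on the local rank-$3$ intervals $[\lambda^{(i-3)},\lambda^{(i)}]$ and $[\lambda^{(i-2)},\lambda^{(i+1)}]$ in the $n$-core poset, together with which connected components carry the four stars at $(i-2)^*,\ldots,(i+1)^*$. Verifying that every other rod can be removed by flattening and every extraneous column by squashing, without altering the restricted signatures or the $\vp_{i-1},\vp_i$ edges, is the key technical hurdle on which Theorem~\ref{thm:Dgraph} ultimately stands.
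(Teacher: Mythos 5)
Your overall plan matches the paper's: spin constancy from the spin-preservation of the $\vp_i$'s, Axioms 1, 2 and 5 from the local structure of $\vp_i$, and Axiom 3 together with $\lsp_2$ from the rank-$4$ classification in Lemma~\ref{l:four.prime}. But the mechanism you propose for the final reduction has a genuine gap.

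You reduce a $2$-colored component $C$ of $(V_{\nu/\mu},\sigma,E_{i-1}\cup E_i)$ to a rank-$4$ affine dual equivalence graph by applying the flattening map (Corollary~\ref{cor:flat}) and then iteratively squashing (Proposition~\ref{prop:cloning}). Neither operation can accomplish this: flattening reduces $n$ to a smaller modulus while preserving the number of ribbons (see Proposition~\ref{prop:interval.reduction}, which explicitly states the bijection preserves the number of $i$-ribbons for each $i$), and squashing removes columns from the abacus, again without changing the rank $\ell(\nu)-\ell(\mu)$. So after flattening and squashing, you still have a rank-$m$ starred strong tableau, not a rank-$4$ one, and Lemma~\ref{l:four.prime} does not apply.

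The correct reduction is a plain restriction, not flattening or squashing. For any two vertices $S^*,T^*$ in the same component $C$ of $(V_{\nu/\mu},\sigma,E_{i-1}\cup E_i)$, the intermediate cores $\lambda^{(j)}$ agree for all $j\leq i-3$ and $j\geq i+1$, since $\vp_{i-1}$ and $\vp_i$ only modify ribbons $i-2,\ldots,i+1$. Mapping each $S^*\in C$ to its restriction to the rank-$4$ interval $[\lambda^{(i-3)},\lambda^{(i+1)}]$ therefore gives an injective, $\vp_{i-1}$- and $\vp_i$-equivariant map that preserves the restricted signature, so $C$ is isomorphic to a component of a rank-$4$ affine dual equivalence graph with edge labels $2,3$ in place of $i-1,i$. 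That is the step the paper's proof invokes (tersely) and it immediately licenses the appeal to Lemma~\ref{l:four.prime}. Your observation that the $E_{i-1}\cup E_i$ edges ``depend only on the local rank-$3$ intervals'' is exactly the right insight, but flattening and squashing are the tools for \emph{proving} the rank-$4$ classification in Lemma~\ref{l:four.prime} (by bounding the search), not for reducing a high-rank component to rank $4$. As written, your proof leaves the key isomorphism unconstructed.
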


\begin{proof}
  By Proposition \ref{prop:spin}, the involutions $\vp_{i}$ preserve
the spin statistic, hence $\spin$ is constant on connected components
of $\G_{\nu/\mu }^{(n)}$.

  To prove $\G_{\nu/\mu }^{(n)}$ is a \D graph, we must verify the axioms in
Definition~\ref{defn:D-graph}.  Axiom $1$ follows from
Theorem~\ref{thm:phi} where $\vp_i$ is shown to be an involution which
switches the sign appropriately.  Axioms $2$ and $5$ follow from the
fact that $\vp_i$ affects only $i-1,i$ and $i+1$-ribbons.  Axiom 3 and
the $\lsp_{2}$ property both follow from Lemma~\ref{l:four.prime}
since every connected component of $\G_{\nu/\mu }^{(n)}$ restricted to
$E_{i-1} \cup E_{i}$ is isomorphic to a component of a rank $4$ affine
dual equivalence graph replacing the edge labels 2,3 by $i-1,i$
respectively.
\end{proof}

Note that affine dual equivalence graphs need not satisfy Axiom 4 of
Definition~\ref{defn:deg}.  It is not known if affine dual equivalence
graphs satisfy Axiom 6.

\section{Connections with LLT and Macdonald polynomials}
\label{sec:LLT}

The primary interest in $k$-Schur functions originally was the
conjecture of Lapointe, Lascoux and Morse that these functions straddle
the gap between Macdonald polynomials and Schur functions. That is,
when $\mu$ is a $k$-bounded partition, they conjecture
\begin{equation}
  H_{\mu} (X; q,t) = \sum_{\nu \leq \mu} K^{(k)}_{\nu,\mu}
  (q,t) \ s^{(k)}_{\nu} (X; t),
  \label{eqn:H2kS}
\end{equation}
where $K_{\nu,\mu}^{(k)} (q,t) \in \mathbb{N}[q,t]$, and
\begin{equation}
  s^{(k)}_{\nu} (X; t) = \sum_{\lambda \leq \nu}
  C^{(k)}_{\lambda,\nu}(t) \ s_{\lambda}(X),
  \label{eqn:kS2s}
\end{equation}
where $C^{(k)}_{\lambda,\nu}(t) \in \mathbb{N}[t]$.

Using the definition of $k$-Schur functions advocated for in this
paper, we show how our methods
shed light on equation~(\ref{eqn:H2kS}), and, more generally, the
problem of expanding LLT polynomials into $k$-Schur functions.

\subsection{Macdonald polynomials}
\label{sec:LLT-mac}

The transformed Macdonald polynomials $\widetilde{H}_{\mu}(X;q,t)$
form a basis for symmetric functions with two additional
parameters. Precisely, $\{\widetilde{H}_{\mu}(X;q,t)\}$ is a basis for
$\Lambda$ with coefficients in $\mathbb{Q}(q,t)$. Macdonald
\cite{Macdonald1988} originally defined the polynomials to be the
unique functions satisfying certain orthogonality and triangularity
conditions. Haglund's monomial (quasisymmetric) expansion for
Macdonald polynomials \cite{Haglund2004,HHL2005} gives an explicit
combinatorial description of $\widetilde{H}_{\mu}(X;q,t)$ as the
$q,t$-generating function of permutations, regarded as standard
fillings of the diagram of $\mu$.

For a cell $x$ in the diagram of a partition $\mu$, let $l(x)$
(respectively $a(x)$) denote the number of cells directly north
(respectively east) of $x$. Given a permutation $w$ of $\{1,2,\ldots,
|\mu| \}$, fill the diagram of $\mu$ with $w$ written in one-line
notation so that $w$ becomes the row reading word of the resulting
filling. A {\em $\mu$-descent} of such a filling is a pair of cells
$(x,y)$ with $x$ immediately north of $y$ and the entry in $x$ is
greater than the entry in $y$. Denote by $\Des_{\mu}(w)$ the set of
all $\mu$-descents of $w$. Define the {\em major index with respect to
$\mu$} to be
\begin{equation}
  \maj_{\mu}(w) \; = \; \sum_{(x,y) \in \Des_{\mu}(w)} l(x) + 1. 
\label{eqn:Hmaj}
\end{equation}
Note that when $\mu$ is a single column, $\maj_{\mu}$ is the usual
major index on permutations.

An ordered pair of cells $(x,y)$ in the diagram of $\mu$ is called
{\em attacking} if $x$ and $y$ lie in the same row with $x$ strictly
west of $y$, or if $x$ is in the row immediately north of $y$ and $x$
lies strictly east of $y$.  Given a permutation filling of $\mu$, a
{\em $\mu$-inversion pair} is an attacking pair $(x,y)$ where the
entry of $x$ is greater than the entry of $y$. Denote by
$\Inv_{\mu}(w)$ the set of inversion pairs of $w$ filled into $\mu$;
this set is a subset of the usual inversion set for $w$. Define the
{\em inversion number with respect to $\mu$} to be
\begin{equation}
  \inv_{\mu}(w) \; = \; \left| \Inv_{\mu}(w) \right| - \sum_{(x,y) \in
    \Des_{\mu}(w)} a(x) .
\label{eqn:Hinv}
\end{equation}
Note that when $\mu$ is a single row, $\inv_{\mu}$ is the usual
inversion number on permutations.

For example, let $\mu$ be the partition $(5,4,4,1)$ and take 
$w =[5 \ 11 \ 14 \ 9 \ 2 \ 6 \ 3 \ 4 \ 10 \ 8 \ 1 \ 13 \ 7 \ 12]$ 
in $S_{14}$. Filling $w$ into $\mu$ gives
\begin{equation}
  \tableau{ 5 \\
    11 & 14 &  9 &  2 \\
    6 &  3 &  4 & 10 \\
    8 &  1 & 13 &  7 & 12}
\label{eqn:Mac-ex}
\end{equation}
Abusing notation, represent a cell of the filling by the entry which
it contains. The $\mu$-descent set of $w$ is
$$
\Des_{\mu}(w) = \left\{(11,6), \ (14,3), \ (3,1), \ (9,4), \ (10,7) \right\} ,
$$
and the $\mu$-inversion pairs of $w$ are given by
\begin{displaymath}
\Inv_{\mu}(w) = \left\{ \begin{array}{rrrrrr}
    (11,9), & (14,2), & (9,6), & ( 6,4), & (10,1), & (13,7), \\
    (11,2), & (14,6), & (9,3), & ( 4,1), & ( 8,1), & (13,12) \\
    (14,9), & ( 9,2), & (6,3), & (10,8), & ( 8,7), &
  \end{array} \right\} .
\end{displaymath}
Therefore the $\maj_{\mu}$ and $\inv_{\mu}$ statistics associated to
$w$ are
\begin{eqnarray*}
  \maj_{\mu}(w) & = &  2 + 1 + 2 + 1 + 2 = 8, \\
  \inv_{\mu}(w) & = & 17 - (3 + 2 + 2 + 1 + 0) = 9.
\end{eqnarray*}

\begin{remark}
  If $(x,y) \in \Des_{\mu}(w)$, then for every cell $z$ of the arm of
  $x$, the entry of $z$ is either bigger than the entry of $y$ or
  smaller than the entry of $x$ (or both). In the former case, $(z,y)$
  will form an inversion pair, and in the latter case, $(x,z)$ will
  form an inversion pair. Therefore $\inv_{\mu}(w)$ is a non-negative
  integer.
\label{rmk:Htriple}
\end{remark}

Define the signature function $\sigma: \Sn \rightarrow \{\pm
1\}^{n-1}$ on permutations by
\begin{equation}\label{eqn:sigma-w}
  \sigma_{i}(w) \; = \; \left\{ 
    \begin{array}{ll}
      +1 & \; \mbox{if $i$ lies left of $i+1$ in $w$} \\
      -1 & \; \mbox{if $i+1$ lies left of $i$ in $w$.}
    \end{array} \right\}
\end{equation}
For the permutation above, $\sigma(w) = -++-++--+-+--$ if we abbreviate
$\{-1,+1 \}$ by $\{-,+ \}$.  Using $\sigma$ to associate a
quasisymmetric function to each permutation filling of $\mu$,
Haglund's formula for $\widetilde{H}_{\mu}(X;q,t)$ may be stated as
follows.

\begin{definition}\cite{Haglund2004}
  The transformed Macdonald polynomials are given by
  \begin{equation}
    \widetilde{H}_{\mu}(X;q,t) = \sum_{w \in \Sn} q^{\inv_{\mu}(w)}
    t^{\maj_{\mu}(w)} Q_{\sigma(w)}(X) . 
    \label{eqn:haglund}
  \end{equation}
\end{definition}

It is a theorem in \cite{HHL2005} that \eqref{eqn:haglund} satisfies
the conditions which uniquely characterize the transformed Macdonald
polynomials as originally defined in \cite{Macdonald1988}. The proof
is by an elegant and elementary combinatorial argument, so we take
Haglund's formula as the definition.

A combinatorial proof of Macdonald positivity is given in
\cite{Assaf2007-3} by putting a \D graph structure on permutation
fillings of a partition diagram. In this case, the edges of the graph
are defined by simple involutions on the permutations. The
\emph{$i$-witness} is defined as usual to be the middle letter of
$i-1,i,i+1$ encountered when reading the permutation from left to
right. The main ingredients in the edges are usual dual equivalence on
permutations, denoted $d_i$, and a natural modification of dual
equivalence, denoted $\widetilde{d}_i$. Precisely, these involutions
are defined by
\begin{eqnarray}
  \cdots i \cdots i\pm 1 \cdots i\mp 1 \cdots
  &
  \stackrel{d_i}{\longleftrightarrow}
  &  \cdots i\mp 1\cdots i\pm 1\cdots i \cdots , \\
  \cdots i \cdots i\pm 1\cdots i\mp 1\cdots
  &
  \stackrel{\widetilde{d}_i}{\longleftrightarrow}
  & \cdots i\pm 1\cdots i\mp 1\cdots i \cdots .
\end{eqnarray}
Note that in the former case the $i$-witness always remains the same
while in the latter it always toggles between $i-1$ and $i+1$. The
edge-defining involutions of the \D graph for $\widetilde{H}_{\mu}(X;q,t)$ is
then given by
\begin{equation}
  \varphi^{\mu}_i(w) \; = \; \left\{
      \begin{array}{rl}
        w & \mbox{if $i$ is the $i$-witness},\\
        \widetilde{d}_i(w) & \mbox{if $i-1,i,i+1$ fit in} \hspace{8ex}
        \makebox[0pt]{%
          \psset{xunit=1em}
          \psset{yunit=1em}
          \pspicture(0,0)(6,2)
          \psline(0,0)(3.5,0) \psline(0,1)(6,1) \psline(2.5,2)(6,2)
          \psline(0,0)(0,1) \psline(1,0)(1,1)
          \psline(2.5,0)(2.5,2) \psline(3.5,0)(3.5,2)
          \psline(5,1)(5,2) \psline(6,1)(6,2)
          \rput(1.8,0.5){$\cdots$}
          \rput(4.3,1.5){$\cdots$}
          \endpspicture} \\[\cellsize]
        d_i(w) & \mbox{otherwise}.
     \end{array} \right.
   \label{eqn:pistol}
\end{equation}
A key observation in \cite{Assaf2007-3} is that $\varphi^{\mu}_i$
preserves Haglund's statistics $\maj_{\mu}$ and $\inv_{\mu}$. Much
like the case for starred strong tableaux, the proof that the
resulting graph satisfies the Axioms 1,2,3, and 5 is rather
straightforward.  Axiom 4' requires a simpler computer verification.

\subsection{LLT Polynomials}
\label{sec:LLT-llt}

We may also regard Haglund's formula for Macdonald polynomials as a
weighted sum over tableaux-like objects. In this paradigm, equation
\eqref{eqn:haglund} can be interpreted as giving a positive expansion
of $\widetilde{H}_{\mu}(X;q,t)$ in terms of certain \emph{LLT
  polynomials}. Lascoux, Leclerc and Thibon \cite{LLT1997} originally
defined $\LLT_{\boldsymbol{\lambda}} (X;q)$ to be the $q$-generating
function of $d$-ribbon tableaux of shape $\mu$ weighted by
cospin. Below we give another modified definition that is popular in
the literature as the $q$-generating function of $d$-tuples of
tableaux weighted by $d$-inversions, first presented in
\cite{HHLRU2005}. The equivalence of these definitions uses the abacus
model for taking $d$-cores and $d$-quotients of partitions
\cite{JaKe1981}; for further details on the correspondence in this
context, we refer the reader to \cite{Assaf2007-2,HHLRU2005}.

Let $\boldsymbol{\lambda}$ represent the $d$-tuple of (skew)
partitions $(\lambda^{(0)}, \ldots, \lambda^{(d-1)})$, each embedded
in a specific way in $\mathbb{N} \times \mathbb{N}$. For such a
$d$-tuple, define the \emph{shifted content} of a cell $x$ by
\begin{equation}
  \widetilde{c}(x) \; = \; d \cdot c(x) + i
\label{eqn:shifted-content}
\end{equation}
when $x$ is a cell of $\lambda^{(i)}$, where $c(x)$ is the usual
content of $x$ regarded as a cell of $\lambda^{(i)}$. Define the
\emph{bandwidth} of a $d$-tuple to be one plus the difference
between the largest and smallest \emph{unshifted} cell contents.

A \emph{standard $d$-tuple of shape $\boldsymbol{\lambda}$} is a
bijective filling of the cells of $\boldsymbol{\lambda}$ with the
letters $1$ to $m$ so that entries increase along rows and up columns.
For example, below is a standard $5$-tuple of shape $((3,2)/(1), \
(1,1,1), \ (2,1), \ (2,2)/(1), \ (1))$, say with the southeasternmost
cell of each partition embedded at content $0$.
\begin{equation}
  \tableau{  &    &   & & 14 & &   &    & &   &    & & \\
    5 & 11 &   & &  3 & & 9 &    & & 2 & 10 & & \\
    &  6 & 8 & &  1 & & 4 & 13 & &   &  7 & & 12}
\label{eqn:LLT-ex}
\end{equation}

Call a pair of cells $(x,y)$ in a $d$-tuple $\boldsymbol{\lambda}$
\emph{attacking} if $d>\widetilde{c}(y) - \widetilde{c}(x)>0$.  For
a standard $d$-tuple $\mathbf{T}$ of shape $\boldsymbol{\lambda}$,
define the \emph{number of $d$-inversions}, denoted
$\inv_d(\mathbf{T})$, to be the number of attacking pairs $(x,y)$ with
the entry of $x$ greater than the entry of $y$. For example, the
$5$-inversion pairs of the standard $5$-tuple above are
\begin{displaymath}
  \left\{ \begin{array}{rrrrrr}
    (11,9), & (14,2), & (9,6), & ( 6,4), & (10,1), & (13,7), \\
    (11,2), & (14,6), & (9,3), & ( 4,1), & ( 8,1), & (13,12), \\
    (14,9), & ( 9,2), & (6,3), & (10,8), & ( 8,7), &
  \end{array} \right\} .
\end{displaymath}
Note that if the $d$-tuple consists of $d$ single boxes,
i.e. $\boldsymbol{\lambda} = \left( (1), (1), \ldots, (1) \right)$,
each embedded to have content $0$, then $d$-inversions are simply the
usual inversions in the permutation obtained by reading the entries in
increasing order of shifted content.

For a $d$-tuple $\boldsymbol{\lambda}$, define the normalizing
constant $a_{\boldsymbol{\lambda}}$ to be the minimum number of
$d$-inversions of a standard $d$-tuple of shape
$\boldsymbol{\lambda}$. This normalization is an artifact of
$q$-counting by $d$-inversions rather than cospin; see
\cite{HHLRU2005}.

\begin{remark}
  Define an \emph{inversion triple} to be a triple of cells
  $(x,y,z)$ such that $x$ lies immediately north of $z$ and $y$ has
  shifted content between that of $x$ and $z$. Then both $(x,y)$ and
  $(y,z)$ are attacking pairs. Since $x$ is north of $z$, we must have
  $x>z$, and so at least one of $(x,y)$ and $(y,z)$ will be a
  $d$-inversion. Say that two inversions triples are overlapping if
  they have the form $(w,x,y)$ and $(x,y,z)$. Note that two
  overlapping inversion triples of this form may contribute only one
  $d$-inversion if $w<x$, $x>y$ and $y<z$. Therefore the normalizing
  constant $a_{\boldsymbol{\lambda}}$ is also equal to the maximum
  number of pairwise nonoverlapping inversion triples of
  $\boldsymbol{\lambda}$.
\label{rmk:triple}
\end{remark}

\begin{definition}
  The \emph{LLT polynomial} of shape $\boldsymbol{\lambda}$, denoted
  $\LLT_{\boldsymbol{\lambda}}$, is defined by
  \begin{equation}
    \LLT_{\boldsymbol{\lambda}}(X;q) \; = \; \sum_{\mathbf{T} \in
      \SYT_{d}(\boldsymbol{\lambda})} q^{\inv_d(\mathbf{T}) -
      a_{\boldsymbol{\lambda}}} Q_{\sigma(\mathbf{T})}(X) , 
    \label{eqn:llt}
  \end{equation}
  where the sum is over standard $d$-tuples of shape
  $\boldsymbol{\lambda}$, $a_{\boldsymbol{\lambda}}$ is the
  normalizing constant for $\boldsymbol{\lambda}$, and
  $\sigma(\mathbf{T})$ is defined analogously to equation
  \eqref{eqn:sigma} using shifted contents.
\end{definition}

The connection between Macdonald polynomials and LLT polynomials can
be seen by transforming the permutation fillings of the diagram of
$\mu$ with a given $\mu$-descent set into standard $\mu_1$-tuples of a
certain shape as follows. Let $D$ be a possible $\mu$-descent set. For
$i=1,\ldots,\mu_1$, let $\mu_D^{(i-1)}$ be the ribbon obtained from
the $i$th column of $\mu$ by putting the entry of cell $(i,j)$
immediately south of the entry of cell $(i,j+1)$ if $((i,j+1),(i,j))
\in D$ and immediately east otherwise. Embed each $\mu_D^{(i)}$ so
that the southeasternmost cell has content $0$ and, equivalently,
shifted content $i$. Then each permutation filling of shape $\mu$ with
$\Des_{\mu}=D$ may be regarded as a standard $\mu_1$-tuple of tableaux
of shape $\boldsymbol{\mu}_{D}$. For example, the filling of
$(5,4,4,1)$ in equation~\eqref{eqn:Mac-ex} corresponds to the standard
$5$-tuple given in equation~\eqref{eqn:LLT-ex}.

Since the major index statistic depends only on the $\mu$-descent set,
we may define the major index of a descent set $D$ of $\mu$ by
$\maj_{\mu}(D) = \maj_{\mu}(w)$ for any permutation $w$ with
$\Des_{\mu}(w) = D$. Similarly, define the arm of a descent set $D$ by
$a(D) = \sum_{(x,y) \in D} a(x)$. Comparing attacking pairs in both
paradigms leads to the following expansion of Macdonald polynomials in
terms of LLT polynomials.

\begin{theorem}\cite{HHL2005}
  Macdonald polynomials may be expressed in terms of LLT polynomials
  as
  \begin{equation}
    \widetilde{H}_{\mu}(X;q,t) = \sum_{D}
    t^{\maj_{\mu}(D)} \ q^{a_{\boldsymbol{\mu}_D}-a(D)} \
    \LLT_{\boldsymbol{\mu}_D}(X;q) ,
    \label{eqn:hag-llt}
  \end{equation}
  where the sum is over all possible $\mu$-descent sets $D$.
\end{theorem}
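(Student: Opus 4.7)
The plan is to start from Haglund's combinatorial formula for $\widetilde{H}_\mu(X;q,t)$ and regroup the sum over permutations according to their $\mu$-descent sets. Since $\maj_\mu(w)$ depends only on $\Des_\mu(w)$ by definition \eqref{eqn:Hmaj}, the factor $t^{\maj_\mu(D)}$ factors out of the inner sum once we fix $D$. The remaining task is to express
\begin{equation*}
\sum_{\substack{w \in \Sn \\ \Des_\mu(w) = D}} q^{\inv_\mu(w)}\, Q_{\sigma(w)}(X)
\end{equation*}
in terms of $\LLT_{\boldsymbol{\mu}_D}(X;q)$. The ribbon tuple $\boldsymbol{\mu}_D$ constructed in the paragraph preceding the theorem already gives the required bijection: a permutation filling $w$ of $\mu$ with $\Des_\mu(w) = D$ corresponds uniquely to a standard $\mu_1$-tuple $\mathbf{T}$ of shape $\boldsymbol{\mu}_D$, since each column of $\mu$ is unambiguously converted into the ribbon $\mu_D^{(i-1)}$ using the descent information in $D$.

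Next I would verify the compatibility of the remaining statistics under this bijection. For the quasisymmetric indexing, one checks that $\sigma(w)$ and $\sigma(\mathbf{T})$ (the latter defined with respect to shifted contents) agree: two consecutive entries $i, i+1$ contribute a $+1$ to $\sigma(w)$ precisely when $i$ lies in a cell of smaller shifted content than $i+1$ in $\boldsymbol{\mu}_D$. This follows from the embedding rule $\widetilde{c}(x) = d \cdot c(x) + (i-1)$ for $x \in \mu_D^{(i-1)}$ combined with the fact that consecutive entries can never occupy cells of equal shifted content (as in Remark~\ref{rmk:content}'s analog).

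The heart of the proof, and the step I expect to be the main obstacle, is verifying the identity
\begin{equation*}
|\Inv_\mu(w)| \;=\; \inv_d(\mathbf{T}),
\end{equation*}
because this is what ultimately makes the $q$-weights match after accounting for both the $-a(D)$ shift appearing in $\inv_\mu$ via \eqref{eqn:Hinv} and the $-a_{\boldsymbol{\mu}_D}$ normalization in the definition of $\LLT_{\boldsymbol{\mu}_D}$. One would argue pair-by-pair: an attacking pair in $\mu$ has the form $(x,y)$ with $x,y$ either in the same row of $\mu$, or with $x$ one row above and strictly east of $y$. Under the column-to-ribbon transformation, one checks that the corresponding cells in $\boldsymbol{\mu}_D$ satisfy $0 < \widetilde{c}(y) - \widetilde{c}(x) < d$, and conversely every attacking pair in $\boldsymbol{\mu}_D$ arises this way. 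The two cases of attacking pairs in $\mu$ correspond precisely to the two possible column-index differences modulo $d$ that keep the shifted content difference in the required range. Using Remark~\ref{rmk:triple}, one then identifies $a_{\boldsymbol{\mu}_D}$ with the count of inversion triples forced by north-neighbor pairs within the same ribbon, which matches the $a(D)$ correction.

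Combining the three compatibilities gives
\begin{equation*}
q^{\inv_\mu(w)} \;=\; q^{|\Inv_\mu(w)| - a(D)} \;=\; q^{\inv_d(\mathbf{T}) - a_{\boldsymbol{\mu}_D}} \cdot q^{a_{\boldsymbol{\mu}_D} - a(D)},
\end{equation*}
so pulling the $D$-dependent prefactors $t^{\maj_\mu(D)} q^{a_{\boldsymbol{\mu}_D} - a(D)}$ outside the sum over fillings with descent set $D$ and recognizing the remainder as $\LLT_{\boldsymbol{\mu}_D}(X;q)$ completes the proof.
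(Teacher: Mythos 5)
The paper does not prove this theorem itself; it cites it to \cite{HHL2005} and merely sketches the underlying bijection in the two preceding paragraphs. Your proof correctly fills in exactly that sketch: you fix the $\mu$-descent set $D$, use the column-to-ribbon transformation to pass from permutation fillings with $\Des_\mu(w)=D$ to standard $\mu_1$-tuples of shape $\boldsymbol{\mu}_D$, and then match the three statistics. The two central verifications are correct: the cell at $(c,r)$ in $\mu$ always lands at content $1-r$ in its ribbon (independently of $D$), so $\widetilde{c}((c,r)) = \mu_1(1-r)+(c-1)$, from which both the row-reading-order/shifted-content comparison (giving $\sigma(w)=\sigma(\mathbf{T})$) and the attacking-pair equivalence (giving $|\Inv_\mu(w)|=\inv_d(\mathbf{T})$) fall out by direct computation.

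One sentence deserves correction. You write that one ``identifies $a_{\boldsymbol{\mu}_D}$ with the count of inversion triples forced by north-neighbor pairs within the same ribbon, which matches the $a(D)$ correction.'' That identification is not correct in general: as the paper notes right after the theorem, $a(D)$ counts only certain nonoverlapping inversion triples, so one has $a_{\boldsymbol{\mu}_D}\geq a(D)$ with strict inequality possible. Fortunately your proof never needs the two to agree --- the final algebraic step
\[
q^{\inv_\mu(w)} = q^{|\Inv_\mu(w)|-a(D)} = q^{\inv_d(\mathbf{T})-a_{\boldsymbol{\mu}_D}}\,q^{a_{\boldsymbol{\mu}_D}-a(D)}
\]
moves whatever difference exists into the prefactor, which is exactly what the theorem's $q^{a_{\boldsymbol{\mu}_D}-a(D)}$ factor is for. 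So the structure of the argument is sound and matches \cite{HHL2005}; just drop or rephrase that one claim, which as stated is false.
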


Note that $a(D)$ counts certain nonoverlapping inversion triples of
$\boldsymbol{\mu}_{D}$, hence by Remark~\ref{rmk:triple},
$a_{\boldsymbol{\mu}_D} \geq a(D)$. Therefore
equation~\eqref{eqn:hag-llt} gives a \emph{positive} expansion of
Macdonald polynomials in terms of LLT polynomials.

The theory of dual equivalence graphs is used in \cite{Assaf2007-2} to
establish LLT positivity, and the graph for Macdonald polynomials
presented in \cite{Assaf2007-3} appears as a special case. The graph
for LLT polynomials may be described in terms of the same elementary
operations, $d_i$ and $\widetilde{d}_i$, on standard $d$-tuples of
tableaux. Define the \emph{$i$-witness} of the dual equivalence for
$i-1,i,i+1$ to be whichever of $i-1,i,i+1$ has shifted content between
the other two. As it transpires, none of the three may have equal
shifted contents. The analog of dual equivalence for standard
$d$-tuples is given by 
\begin{equation}
  \varphi^{d}_i(w) \; = \; \left\{
      \begin{array}{rl}
        w & \mbox{if $i$ is the $i$-witness},\\[\smcellsize]
        \widetilde{d}_i(w) & \mbox{if $|\widetilde{c}(i) -
          \widetilde{c}(i\!-\!1)| \leq d$ and $|\widetilde{c}(i) -
          \widetilde{c}(i\!+\!1)| \leq d$}, \\[\smcellsize]
        d_i(w) & \mbox{otherwise}.
     \end{array} \right.
   \label{eqn:dist}
\end{equation}
It is shown in \cite{Assaf2007-2} that $\varphi^{d}_i$ preserves the
number of $d$-inversions and that the graph constructed from these
involutions is in fact a \D graph. A close inspection of equations
\eqref{eqn:pistol} and \eqref{eqn:dist} reveals that if $S$ is a
permutation filling of $\mu$ and $\mathbf{T}$ is the standard
$d$-tuple corresponding to $S$ via the bijection described above, then
$\varphi^{d}_{i}(\mathbf{T})$ is the standard $d$-tuple corresponding
to the permutation filling $\varphi^{\mu}_{i}(S)$ of $\mu$.

\subsection{Expansions into $k$-Schur functions}
\label{sec:LLT-equal}

Consider the case when the Macdonald polynomial
$\widetilde{H}_{\mu}(X;q,t)$ is equal to a single LLT polynomial
$\LLT_{\boldsymbol{\mu}_D}$. This happens when $\mu$ is a single row,
and so $\boldsymbol{\mu}_D = \left( (1),\ldots,(1) \right)$ each
embedded at content $0$. For this extreme case, an LLT polynomial will
have the most terms in the quasisymmetric expansion. Similarly, a
$k$-Schur function has the most terms in the quasisymmetric expansion
when $k$ is as small as possible, i.e. $n=k+1=2$. In both cases, the \D
graph will have no double edges and will always toggle the $i$-witness
across an $i$-edge.

More to the point, define a map $\theta$ from starred strong tableaux
on the $2$-core $(m,m-1,\ldots,1)$ to standard tableaux on the
$m$-tuple $((1),\ldots,(1))$ embedded so that each cell has content
$0$ as follows. Assuming relative positions for $1$ up to $i-1$ have
been chosen, reading from left to right, place $i$ in position
$d(i^*)$. Once all letters are placed, fill the permutation into the
$m$-tuple. For example,
\begin{displaymath}
  \raisebox{\cellsize}{\tableau{4 \\ 
    3^* & 4 \\   
    2 & 3 & 4^* \\
    1^* & 2^* & 3 & 4}}
  \stackrel{\displaystyle{\theta}}{\longrightarrow} \hspace{\cellsize}
  \tableau{3} \hspace{\cellsize} \tableau{1} \hspace{\cellsize} \tableau{4}
  \hspace{\cellsize} \tableau{2}.
\end{displaymath}

\begin{theorem}
  The map $\theta$ is a \D graph isomorphism between the graph of
  starred strong tableaux on the $2$-core $(m,m-1,\ldots,1)$ and the
  graph of standard filling of the $m$-tuple $((1), \ldots, (1))$ each
  embedded at content $0$. Furthermore, $q^{\binom{m}{2}} - \spin(S^*)
  = \inv(\theta(S^*))$.
\label{thm:staircase}
\end{theorem}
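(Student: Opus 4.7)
The plan is to exploit the extreme rigidity of the $n=2$ staircase case at each step. Since $2$-cores are exactly staircases and ribbons of length $n=2$ can never be removed from a $2$-core, any saturated chain from $\emptyset$ to $(m,m-1,\ldots,1)$ must pass through $\emptyset \subset (1) \subset (2,1) \subset \cdots \subset (m, m-1, \ldots, 1)$, and by Corollary~\ref{cor:iribbons} each covering adds ribbons of length $s-r=1$: step $i$ adds $i$ single cells along the antidiagonal of contents $-(i-1), -(i-3), \ldots, (i-1)$. Hence a starred strong tableau is determined by the depth sequence $(d(1^{*}), \ldots, d(m^{*}))$ with $d(i^{*}) \in \{0, \ldots, i-1\}$, and the relation $c_{i} = 2\,d(i^{*}) - (i-1)$ is immediate. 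The map $\theta$ is then the usual Lehmer-code-style insertion, which is manifestly a bijection onto $S_{m}$.

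For the numerical identity, every ribbon is a single cell, so $h(i)=1$ and $\spin(S^{*}) = \sum_{i} d(i^{*})$. Inserting $i$ at position $d(i^{*})$ in a word of length $i-1$ places $i-1-d(i^{*})$ smaller elements to its right, each contributing one inversion, and these inversions survive all subsequent insertions; summing yields $\inv(\theta(S^{*})) = \binom{m}{2} - \spin(S^{*})$. On the LLT side, shifted contents take the values $0,1,\ldots,m-1$, so every ordered pair is attacking for $d=m$ and $\inv_{d}$ reduces to the ordinary inversion count, with normalizing constant $a_{\boldsymbol{\lambda}} = 0$. The content-depth formula yields $c_{i} < c_{i+1}$ iff $d(i^{*}) < d((i+1)^{*})$, while tracking the insertion shows $i$ lies to the left of $i+1$ in $\theta(S^{*})$ iff $d(i^{*}) < d((i+1)^{*})$ (later insertions shift both $i$ and $i+1$ by the same amount), giving $\sigma(S^{*}) = \sigma(\theta(S^{*}))$.

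The heart of the argument is the commutation $\theta \circ \vp_{i} = \vp^{d}_{i} \circ \theta$. On the LLT side, shifted contents lie in $[0, m-1]$ and $d = m$, so the condition $|\widetilde{c}(i) - \widetilde{c}(i \pm 1)| \leq d$ of Equation~\eqref{eqn:dist} holds automatically, forcing $\vp^{d}_{i} = \widetilde{d}_{i}$ whenever $i$ is not the $i$-witness; thus $\vp^{d}_{i}$ executes the cyclic rearrangement $(i, i\pm 1, i\mp 1) \mapsto (i\pm 1, i\mp 1, i)$ at the three positions. On the SST side the elaborate case analysis of Definition~\ref{defn:phi} collapses dramatically: only two residues exist so $S_{i}$ and $S_{j}$ are always abutting; consecutive antidiagonals are fully cell-adjacent, so $S_{i} \cup S_{j}$ is one single zigzag and $B_{i} = B_{j}$; and every length-two interval in the $2$-core poset is a chain, killing cases 2, 4, 5, and 6 of Definition~\ref{defn:phi}. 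Since $h, j \in \{i-1, i+1\}$ share a residue, $b_{h} \equiv b_{j}$ and $c_{h} \equiv c_{j}$ are automatic, so case~3 applies and $\vp_{i}(S^{*}) = \snake^{h}_{i, j}(S^{*})$ whenever $i \neq h$. One then unravels the snake formula of Equation~\eqref{e:snake} to see it rotates the three stars along the zigzag in exactly the manner that $\widetilde{d}_{i}$ rotates the letters $i-1, i, i+1$ at their three positions in $\theta(S^{*})$, with the two subcases of the snake matching $h=i-1$ versus $h=i+1$.

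The principal obstacle lies in this last verification: even with all the other cases of $\vp_{i}$ eliminated, the snake formula still splits based on the XOR condition $(c_{j} < c_{i})$ xor $(i < j)$, and translating its action on the content vector back to the depth vector and then to positions in $\theta(S^{*})$ requires a careful four-way bookkeeping (witness in $\{i-1, i+1\}$ times the two snake subcases). I will also need to check that the SST witness-selection rule, including the tie-breaking case~2(a) of Definition~\ref{defn:witness} when $c_{i-1} = c_{i+1}$, agrees with the ``median of the three positions in $\theta(S^{*})$'' rule on the LLT side; this reduces to a short calculation using the content-depth formula together with the observation that the positions of $i-1, i, i+1$ in $\theta(S^{*})$ are determined by $d((i-1)^{*}), d(i^{*}), d((i+1)^{*})$ after step $i+1$ and only shift together under later insertions.
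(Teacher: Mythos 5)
Your proposal is correct and follows essentially the same route as the paper: establish that $\theta$ is a Lehmer-code-style bijection, that $\spin(S^*)=\sum_i d(i^*)$ in the $n=2$ case (so the inversion count is immediate), that $\theta$ preserves the descent signature via the content--depth translation $c_i=2d(i^*)-(i-1)$, and that $\theta$ intertwines $\vp_i$ with $\vp^d_i$. The paper's own proof is even terser at the commutation step --- it observes that the witnesses match and that ``both actions toggle the witness,'' and declares commutation --- whereas you add two genuinely clarifying observations that the paper leaves implicit: first, that when $n=2$ the entire case analysis of Definition~\ref{defn:phi} collapses to $\snake^h_{i,j}$ (because the $2$-core poset is a chain, $S_i$ and $S_j$ are always abutting, $B_i=B_j$, and the residue conditions for the snake hold automatically); second, that on the LLT side $\vp^d_i$ is always $\widetilde{d}_i$ because all shifted contents lie in $\{0,\ldots,m-1\}$ with $d=m$, and the normalizing constant vanishes.

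You are honest that you have not carried out the final ``four-way bookkeeping'' that unravels Equation~\eqref{e:snake} and checks it against $\widetilde{d}_i$; the paper does not carry it out either. For what it is worth, the reduction you have set up is sound: the relative order of $i-1,i,i+1$ in $\theta(S^*)$ is determined by $d((i-1)^*),d(i^*),d((i+1)^*)$ alone, both $\snake^h_{i,j}$ and $\widetilde{d}_i$ alter only those three depths, and the positions of letters $k>i+1$ are rigidly preserved (being determined by $d_k,\ldots,d_m$), while the positions of $i-1,i,i+1$ remain a fixed triple as a multiset. So the whole verification reduces to a finite check, and the four subcases of the snake (two witnesses times the XOR branch) do match the four non-fixed configurations of $\widetilde{d}_i$. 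The one place where your phrasing is slightly loose is the sentence ``later insertions shift both $i$ and $i+1$ by the same amount''; that is not literally true position-by-position (an insertion strictly between $i$ and $i+1$ shifts only one of them), but what you use --- that the \emph{relative} order of $i$ and $i+1$ is fixed once $i+1$ is inserted --- is correct and suffices.
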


\begin{proof}
  Since $d(i^*)$ may be recovered for each $i$ from the permutation,
  $\theta$ is clearly a bijection on the underlying vertex
  sets. Moreover, $\theta$ will place $i$ to the left of $i-1$ if and
  only if $d(i^*) \leq d(i-1^*)$ which is the case if and only if
  $i^*$ lies on an earlier diagonal from $(i-1)^*$. Therefore $\theta$
  preserves the relative signatures. A similar analysis of $d(i-1^*)$
  and $d(i+1^*)$ reveals that the witness for the action on the
  staircase is precisely the witness for the action on the
  permutation. Since both actions toggle the witness, $\theta$
  commutes with the respective $i$-edges of the graphs, and hence is
  an isomorphism. Finally, adding $i$ at position $d(i^*)$ creates
  exactly $i-(d(i^*)+1)$ inversions.
\end{proof}

Motivated by Theorem~\ref{thm:staircase}, define the \emph{cospin}
of a starred strong tableau by
\begin{equation}
  \cospin(S^*) = \sum_{i} n(i) \cdot (w(i) - 1) + n(i) - \left(d(i^*)
    + 1 \right),
\label{eqn:cospin}
\end{equation}
where $w(i)$ is the \emph{width} of an $i$-ribbon. Define the
\emph{modified $k$-Schur functions}, denoted
$\widetilde{s}_{\lambda}^{(k)}(X;q)$, by
\begin{equation}
  \widetilde{s}_{\lambda}^{(k)}(X;q) = \sum_{S^* \in \SST(\rho(\lambda),n)}
  q^{\cospin(S^*)} Q_{\sigma(S^*)}(X) .
  \label{eqn:co-kschur}
\end{equation}
Here we have changed to the parameter $q$ in order to highlight
connections with LLT polynomials and Macdonald polynomials. Recall
from Section~\ref{sec:LLT-mac} that a Macdonald polynomial indexed by
a single row is precisely an LLT polynomial where each component is a
single cell. Therefore we may interpret this isomorphism of \D graphs
as the following symmetric function identity.

\begin{corollary}
  For $m \geq 1$, we have
  \begin{equation}
    \LLT_{(1),\ldots,(1)}(X;q) = \widetilde{H}_{(m)}(X;q,t) =
    \widetilde{s}^{(1)}_{(1^m)}(X;q).
    \label{eqn:LLT-Mac-kSchur}
  \end{equation}
  \label{cor:staircase}
\end{corollary}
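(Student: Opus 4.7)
The plan is to verify the two equalities separately, each reducing to a specialization of machinery already developed in the paper. For the first equality $\LLT_{(1),\ldots,(1)}(X;q) = \widetilde{H}_{(m)}(X;q,t)$, I would invoke Haglund's expansion \eqref{eqn:hag-llt} for $\mu = (m)$. Since the diagram of $(m)$ is a single row, no cell lies immediately north of another, so the only possible $\mu$-descent set is $D=\emptyset$. The associated tuple is then $\boldsymbol{\mu}_{\emptyset} = ((1),\ldots,(1))$ with $m$ single-cell components each embedded at content $0$, and one has $\maj_{\mu}(\emptyset)=0$, $a(\emptyset)=0$, and $a_{\boldsymbol{\mu}_{\emptyset}}=0$ (realized by the identity filling). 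Equation \eqref{eqn:hag-llt} therefore collapses to a single term, yielding the first equality.

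For the second equality $\widetilde{H}_{(m)}(X;q,t) = \widetilde{s}^{(1)}_{(1^m)}(X;q)$, I would apply Theorem~\ref{thm:staircase}. Taking $k=1$, and hence $n=2$, one checks that $\rho((1^m))$ is the staircase $2$-core $(m,m-1,\ldots,1)$, so that the modified $k$-Schur formula \eqref{eqn:co-kschur} expresses $\widetilde{s}^{(1)}_{(1^m)}(X;q)$ as the weighted quasisymmetric sum over $\SST((m,m-1,\ldots,1),2)$ with weight $q^{\cospin(S^*)}$. The map $\theta$ of Theorem~\ref{thm:staircase} is a signature-preserving bijection onto standard fillings of the $m$-tuple of single cells. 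Since the shifted contents $0,1,\ldots,m-1$ all lie within an interval of length $m$, every ordered pair of distinct cells is attacking, so $\inv_m$ reduces to the ordinary inversion count of the permutation read off by shifted content, and the LLT generating function matches the Haglund sum for $\widetilde{H}_{(m)}$ from the first paragraph.

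What remains is to match the $q$-statistic, i.e.\ to show $\cospin(S^*) = \inv(\theta(S^*))$. Theorem~\ref{thm:staircase} supplies the relation $\binom{m}{2} - \spin(S^*) = \inv(\theta(S^*))$, so it suffices to verify the complementary identity $\spin(S^*) + \cospin(S^*) = \binom{m}{2}$ on SSTs of the staircase. Summing \eqref{eqn:spin} and \eqref{eqn:cospin} gives
\[
\spin(S^*) + \cospin(S^*) \;=\; \sum_{i=1}^{m} \bigl[n(i)\bigl(h(i)+w(i)-1\bigr)-1\bigr],
\]
and since every ribbon added in the $n=2$ setting is a domino with $h(i)+w(i)=3$, the sum equals $\sum_{i=1}^{m}\bigl(2n(i)-1\bigr)$. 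Because $\sum_{i=1}^{m} 2n(i)$ counts the cells of the staircase $(m,m-1,\ldots,1)$, this evaluates to $\binom{m+1}{2}-m=\binom{m}{2}$, as required. The only real content of the proof is Theorem~\ref{thm:staircase}; the remaining obstacles are purely bookkeeping, namely confirming that the bandwidth forces $\inv_m$ to coincide with ordinary inversion, and that the domino identity yields the complementary spin/cospin relation.
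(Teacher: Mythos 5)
Your proposal is correct in its overall structure and follows the route the paper intends: the first equality falls out of Haglund's formula \eqref{eqn:hag-llt} specialized to $\mu=(m)$ with the unique descent set $D=\emptyset$, and the second equality is the translation of the \D graph isomorphism in Theorem~\ref{thm:staircase} into a symmetric function identity, once one reconciles $\cospin$ with $\inv$ via the complementary relation $\spin(S^*)+\cospin(S^*)=\binom{m}{2}$. The paper states the corollary without an explicit proof and your writeup supplies exactly the bookkeeping it leaves implicit.

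There is, however, a factual error in your verification of the complementary identity: you assert that ``every ribbon added in the $n=2$ setting is a domino with $h(i)+w(i)=3$.'' This is false. For $n=2$, Corollary~\ref{cor:iribbons} forces $0 < s-r < 2$, so $s-r=1$ and the $i$-ribbons are single cells with $h(i)=w(i)=1$, i.e.\ $h(i)+w(i)-1=1$. Indeed the step from $(i-1,\dots,1)$ to $(i,\dots,1)$ adjoins $n(i)=i$ isolated cells on a single content diagonal; dominoes could not even tile the staircase, since $\binom{m+1}{2}$ is not always even. The error happens to be harmless because the quantity $n(i)\bigl(h(i)+w(i)-1\bigr)$ in your expression is, for any ribbon shape, the number of cells of $\lambda^{(i)}/\lambda^{(i-1)}$ (the length of a ribbon is $h(i)+w(i)-1$ and there are $n(i)$ of them), so $\sum_{i=1}^m n(i)\bigl(h(i)+w(i)-1\bigr)=\binom{m+1}{2}$ regardless, and the total $\binom{m+1}{2}-m=\binom{m}{2}$ comes out the same. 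You should replace the domino claim with the observation that the sum telescopes to the total number of cells of the staircase, which makes the argument uniform and avoids the incorrect ribbon description.
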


Another illuminating case to consider is when an LLT polynomial is
equal to a single Schur function. It is easy to see from the
definition that this is the case exactly when the indexing tuple
consists of a single partition. Similarly, we have the following
characterization for $k$-Schur functions.

\begin{proposition}
  A $k$-Schur function is equal to a single Schur function if and only
  if the indexing partition has bandwidth at most $k$.
  \label{prop:1schur}
\end{proposition}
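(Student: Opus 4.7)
The key observation is that $\rho(\lambda)=\lambda$ if and only if $\lambda$ has bandwidth at most $k$. Indeed, by the definition of $\rho$, sliding rows is performed exactly when some cell has hook length exceeding $k$, and $\rho$ is the identity otherwise; moreover, $|\rho(\lambda)|>|\lambda|$ whenever any sliding occurs.

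For the $(\Leftarrow)$ direction, suppose $\lambda$ has bandwidth at most $k$, so all hooks of $\lambda$ are at most $k<n$. Then $\rho(\lambda)=\lambda$ is an $n$-core, and because hook lengths of any $\mu\subset\lambda$ are bounded by those of $\lambda$, every $\mu\subset\lambda$ in Young's lattice is also an $n$-core. Therefore the interval $[\emptyset,\lambda]$ in the $n$-core poset coincides with the same interval in Young's lattice, and every cover adds a single cell. This has three consequences: saturated chains correspond bijectively with $\SYT(\lambda)$; each $i$-ribbon is a single connected component, so there is no starring choice; and for every $i$ we have $n(i)=h(i)=1$ and $d(i^{*})=0$, so $\spin(S^{*})=0$. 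Since contents of cells give the usual descent signature, Definition~\ref{defn:kschur} and Theorem~\ref{thm:quasisym} yield
\[
s^{(k)}_{\lambda}(X;t)=\sum_{T\in\SYT(\lambda)}Q_{\sigma(T)}(X)=s_{\lambda}(X).
\]

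For the $(\Rightarrow)$ direction I would argue the contrapositive. If $\lambda$ has bandwidth greater than $k$, then $\rho(\lambda)\neq\lambda$ and $|\rho(\lambda)|>|\lambda|$. Since the rank of $\rho(\lambda)$ in the $n$-core poset equals $|\lambda|$, any saturated chain from $\emptyset$ to $\rho(\lambda)$ has length $|\lambda|$ while adding $|\rho(\lambda)|>|\lambda|$ cells; by pigeonhole, some cover must add at least two cells. By Corollary~\ref{cor:iribbons}, such a cover is either multi-component (several identical ribbons) or a single ribbon of length at least two. In the multi-component case, starring any component other than the northwesternmost forces $d(i^{*})\geq 1$, producing an $S^{*}$ with $\spin(S^{*})\geq 1$. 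In the single-ribbon case, if the ribbon has height $\geq 2$ then $n(i)(h(i)-1)\geq 1$, again producing positive spin. Since the fundamental quasisymmetric functions are linearly independent, producing even one $S^{*}$ with $\spin(S^{*})>0$ forces $s^{(k)}_{\lambda}(X;t)$ to have a nontrivial positive power of $t$, hence $s^{(k)}_{\lambda}(X;t)\neq s_{\lambda}(X)$.

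The main obstacle lies in the converse, specifically in the subcase where every multi-cell cover across every chain could conceivably be a single \emph{horizontal} ribbon of length $\geq 2$ (which contributes zero spin). Ruling this out requires an abacus argument: when $|\rho(\lambda)|>|\lambda|$, the rod-length disparities in the abacus of $\rho(\lambda)$ must differ by at least two somewhere, and either (i) one can find an intermediate $n$-core whose abacus exhibits two rods of the same residue class requiring simultaneous exchange (forcing a multi-component cover), or (ii) an exchange must span rods of non-adjacent contents, forcing a ribbon of height $\geq 2$. Establishing this dichotomy rigorously via the abacus model of Section~\ref{sec:poset} is the technical heart of the converse.
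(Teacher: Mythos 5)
Your $(\Leftarrow)$ direction is exactly the paper's argument: bandwidth at most $k$ forces $\rho(\lambda)=\lambda$, the interval $[\emptyset,\lambda]$ in the $n$-core poset coincides with Young's lattice below $\lambda$, every cover is a single cell, so strong tableaux are ordinary SYT, every $n(i)=h(i)=1$ and $d(i^{*})=0$, and the weighted sum collapses to $s_{\lambda}$ via Theorem~\ref{thm:quasisym}. This is correct and complete.

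For $(\Rightarrow)$ you honestly flag that there is a hole, and the hole is real. Your pigeonhole step shows only that some cover on each chain adds at least two cells, and Corollary~\ref{cor:iribbons} gives two sub-cases: several copies of one ribbon, or a single longer ribbon. You dispose of the multi-component and tall-ribbon cases by producing an $S^{*}$ with positive spin, which is fine. But a single cover that is one \emph{horizontal} ribbon of length $\geq 2$ has $n(i)=h(i)=1$ and $d(i^{*})=0$, hence contributes $0$ to spin, and your argument as written cannot exclude the possibility that every cover on every chain from $\emptyset$ to $\rho(\lambda)$ is of this innocuous form. (Moreover, nonzero spin is not a priori the only obstruction: $s^{(k)}_{\lambda}$ could in principle have all spins zero and still fail to be a single Schur function, so even after the spin obstruction is secured, one is implicitly relying on the $t=0$ part being something other than a single $s_\mu$, which is the same statement in disguise.) Your proposed fix is too vague to assess: ``two rods of the same residue class'' is not meaningful since each rod of the $n$-rod abacus is by definition a distinct residue class, and the dichotomy you describe is not established. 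To close the gap one genuinely needs to prove that when $|\rho(\lambda)|$ exceeds its rank, some rank-two or rank-one subinterval of $[\emptyset,\rho(\lambda)]$ in the $n$-core poset produces a cover with at least two identical ribbon components (equivalently, that some rod-length difference of at least $2$ is encountered along some chain), for then starring the deeper copy gives an $S^{*}$ with $\spin\geq 1$. That is a true abacus lemma, but it is an actual lemma, not a remark. The paper's own proof dispatches the converse with ``this argument is easily reversed,'' which is terse to the point of eliding the same issue; your attempt to make it precise is what exposed the missing step.
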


\begin{proof}
  To say $\lambda$ has bandwidth $k$ is to say that the rim of
  $\lambda$ consists of $k$ cells, and thus $\lambda$ is an $n$-core,
  where $n=k+1$. Moreover, in this case the $n$-core poset is
  isomorphic to Young's lattice. Therefore the strong tableaux of
  shape $\lambda$ are precisely the standard tableaux of shape
  $\lambda$, and the contribution to $\spin$ is nil. This argument is
  easily reversed.
\end{proof}

In both cases, the \D graphs will be the standard dual equivalence
graph for the indexing partition. On the level of the symmetric
functions, we have the following identity.

\begin{corollary}
  For $\lambda$ a partition with bandwidth at most $k$, we have
  \begin{displaymath}
    \LLT_{\lambda}(X;q) = s_{\lambda}(X) = \widetilde{s}^{(k)}_{\lambda}(X;q).
  \end{displaymath}
  \label{cor:1schur}
\end{corollary}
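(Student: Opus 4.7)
The plan is to establish the two equalities $\LLT_{\lambda}(X;q) = s_{\lambda}(X)$ and $s_{\lambda}(X) = \widetilde{s}^{(k)}_{\lambda}(X;q)$ independently, both by reducing each generating function to a $q$-trivial sum of fundamental quasisymmetric functions over $\SYT(\lambda)$ and invoking Theorem~\ref{thm:quasisym}.

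For the first equality, I would unpack the LLT definition in the special case of a single-component tuple $\boldsymbol{\lambda} = (\lambda)$, so $d=1$. Then the shifted content $\widetilde{c}(x) = c(x)$ is an integer, and the attacking condition $d > \widetilde{c}(y) - \widetilde{c}(x) > 0$ becomes $1 > \widetilde{c}(y) - \widetilde{c}(x) > 0$, which no pair of cells satisfies. Hence $\inv_1(\mathbf{T}) = 0$ for every standard filling, and the normalization $a_{\boldsymbol{\lambda}} = 0$, so
\[
\LLT_{(\lambda)}(X;q) = \sum_{\mathbf{T} \in \SYT(\lambda)} Q_{\sigma(\mathbf{T})}(X),
\]
and by Theorem~\ref{thm:quasisym} this is $s_{\lambda}(X)$. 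The descent signature on standard $1$-tuples reduces to the descent signature on standard Young tableaux since shifted content equals content.

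For the second equality, I would reuse the content of Proposition~\ref{prop:1schur} verbatim: when $\lambda$ has bandwidth at most $k$, the rim of $\lambda$ has at most $k$ cells, so $\lambda$ is already an $n$-core with $n=k+1$, and the $n$-core poset restricted below $\lambda$ coincides with Young's lattice restricted below $\lambda$. Consequently $\SST(\rho(\lambda),n) = \SST(\lambda,n)$ and every starred strong tableau of shape $\lambda$ is a standard Young tableau in which each $i$-ribbon is a single cell. Reading off the cospin from \eqref{eqn:cospin}, we get $n(i) = 1$, $w(i) = 1$, and $d(i^*) = 0$ for each $i$, so every term $n(i)(w(i)-1) + n(i) - (d(i^*)+1)$ vanishes and $\cospin(S^*) = 0$. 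Therefore
\[
\widetilde{s}^{(k)}_{\lambda}(X;q) = \sum_{T \in \SYT(\lambda)} Q_{\sigma(T)}(X) = s_{\lambda}(X),
\]
again by Theorem~\ref{thm:quasisym}, after checking that the descent signature on starred strong tableaux agrees with the standard tableau descent signature in this case (which is immediate since each $i^{*}$ occupies a unique cell).

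There is essentially no obstacle here; the result is a direct bookkeeping consequence of Proposition~\ref{prop:1schur} together with the LLT degeneracy at $d=1$. The only subtle point is confirming that the cospin statistic, not merely the spin statistic, vanishes on single-cell ribbon tableaux — which is transparent from \eqref{eqn:cospin} once $w(i)=1$ and $d(i^{*})=0$ are noted.
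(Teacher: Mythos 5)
Your proof is correct and follows essentially the same route as the paper: the paper sketches $\LLT_{\lambda} = s_{\lambda}$ from the degeneracy of a single-component tuple ($d=1$ has no attacking pairs), and the $k$-Schur equality is exactly the content of Proposition~\ref{prop:1schur} — when the bandwidth is at most $k$, $\lambda$ is itself a $(k+1)$-core, $\rho(\lambda)=\lambda$, the interval $[\emptyset,\lambda]$ in the $(k+1)$-core poset coincides with Young's lattice, and every strong tableau is a standard tableau with singleton ribbons. Your remark that one must check the vanishing of $\cospin$ (and not just $\spin$) for the modified $k$-Schur function $\widetilde{s}^{(k)}_{\lambda}(X;q)$ is a worthwhile explicit check that the paper leaves implicit, but it is the same computation: $n(i)=w(i)=1$ and $d(i^{*})=0$ kill every term of \eqref{eqn:cospin}.
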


Corollaries~\ref{cor:staircase} and \ref{cor:1schur} support the
following, first conjectured by Mark Haiman \cite{HaimanPC}.

\begin{conjecture}
  Let $\boldsymbol{\lambda} = (\lambda^{(0)}, \ldots, \lambda^{(m-1)})$
  be an $m$-tuple of partitions of bandwidth at most $k$. Then
  \begin{equation}
    \LLT_{\boldsymbol{\lambda}}(X;q) \ = \
    \sum_{\mu} c^{(k)}_{\boldsymbol{\lambda},\mu}(q) \
    \widetilde{s}^{(k)}_{\mu}(X;q),  
  \end{equation}
  where $c^{(k)}_{\boldsymbol{\lambda},\mu}(q) \in \mathbb{N}[q]$.
  \label{conj:LLT-kSchur}
\end{conjecture}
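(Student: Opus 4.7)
The plan is to establish an isomorphism of \D graphs between a suitable subgraph of the LLT graph on standard $d$-tuples of shape $\boldsymbol{\lambda}$ (with each $\lambda^{(i)}$ of bandwidth at most $k$) and a subgraph of some affine dual equivalence graph $\G^{(n)}_{\nu/\mu}$ with $n=k+1$, and then to leverage the graph structure to transfer a Schur-like positivity from the $k$-Schur side back to LLT polynomials. Concretely, I would first construct, via the abacus model with $n=k+1$ rods, a ``merging'' map $\Theta$ that takes a $d$-tuple $\boldsymbol{\lambda}$ of partitions each of bandwidth at most $k$ to an $n$-core skew shape $\nu/\mu$ whose $n$-quotient in the sense of \cite{JaKe1981} is $\boldsymbol{\lambda}$ itself. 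The bandwidth hypothesis is essential here: each $\lambda^{(i)}$ fits on a sub-abacus with at most $n$ active rod positions, so that interleaving the $d$ sub-abaci according to the shifted content $\widetilde{c}$ from \eqref{eqn:shifted-content} produces an unambiguous $n$-core configuration.

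Next I would extend $\Theta$ to standard fillings. A filling $\mathbf{T}\in\SYT_d(\boldsymbol{\lambda})$ corresponds to a saturated chain in the product of Young's lattices; after applying $\Theta$ this chain becomes a saturated chain in the $n$-core poset by Corollary~\ref{cor:iribbons}, since each single-box step on component $\lambda^{(i)}$ becomes an addition of (possibly several copies of) a ribbon of length determined by the content and the interleaving pattern. Choosing the starred ribbon at each stage according to the shifted-content position of the new letter in $\mathbf{T}$ produces a starred strong tableau $\Theta(\mathbf{T})\in\SST(\nu/\mu,n)$. The first thing to verify is that $\Theta$ preserves the signature: this should reduce to the observation that the descent signature of a filling depends only on the relative shifted contents of consecutive labels, which is precisely the information encoded in the content vector $c^{*}$ of the image.

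The next step is to check that $\Theta$ intertwines the two families of involutions, i.e.\ $\Theta(\varphi^d_i(\mathbf{T}))=\vp_i(\Theta(\mathbf{T}))$ for all admissible $i$. One would do this by comparing the three cases in \eqref{eqn:dist} with the much longer list in \eqref{eqn:phi}: the no-move case and the two ``toggle/non-toggle'' cases for LLT should match respectively the $i=h$, $\starswap/\snake$, and $\basic/\double$ cases on the affine side, with the matching controlled entirely by the configuration of the three relevant rods on the $n$-abacus. Theorem~\ref{thm:staircase} (and Corollary~\ref{cor:staircase}) is exactly the $d=m$, $k=1$ boundary case of this intertwining and provides the template. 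Simultaneously one needs to verify the statistic identity $\inv_d(\mathbf{T})-a_{\boldsymbol{\lambda}}=\cospin(\Theta(\mathbf{T}))$ from \eqref{eqn:cospin}; this is a local computation, triple by triple in the sense of Remark~\ref{rmk:triple}, matching each $d$-inversion with a contribution of a ribbon-height or ribbon-depth term.

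Granting the intertwining, each connected component $C$ of the LLT graph on $\boldsymbol{\lambda}$ is identified with a connected component of some affine dual equivalence graph $\G^{(n)}_{\nu/\mu}$, and the generating function $\sum_{\mathbf{T}\in C} q^{\inv_d(\mathbf{T})-a_{\boldsymbol{\lambda}}}Q_{\sigma(\mathbf{T})}(X)$ equals $q^{\cospin_C}\cdot F_{[\Theta(\mathbf{T})]}(X)$ for a constant $\cospin_C$. The hard part, and the main obstacle, is then the final step: showing that the generating function of a single connected component of an affine dual equivalence graph is a nonnegative integer combination of modified $k$-Schur functions $\widetilde s^{(k)}_\mu$. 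This is strictly stronger than Theorem~\ref{thm:Dgraph}, which only gives local Schur positivity on $2$-colored edges, and is precisely the gap flagged in the footnote of Section~\ref{sec:intro}. To close it, one would need either (i) a classification of all isomorphism types of rank-$5$ connected components of affine dual equivalence graphs (extending Lemma~\ref{l:four.prime}), together with an analogue of Axiom (4$'$) sufficient to promote $\lsp_2$ to $\lsp_h$ for all $h$; or (ii) a direct combinatorial argument, perhaps using the flattening and squashing maps of Sections~\ref{sec:flattening.map}--\ref{sec:graph-cloning} to reduce each component to a base case where the expansion into $\widetilde s^{(k)}_\mu$ can be computed explicitly. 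I expect (i) to be the more feasible route in the short term, but (ii) is the one that would produce a truly combinatorial proof and would, via \eqref{eqn:hag-llt}, simultaneously yield the desired Macdonald-to-$k$-Schur positivity of \eqref{eqn:H2kS}.
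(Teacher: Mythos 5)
The statement you are trying to prove is Conjecture~\ref{conj:LLT-kSchur}, and the paper does not prove it: it is stated as a conjecture (attributed to Haiman), supported only by the extreme cases in Corollaries~\ref{cor:staircase} and~\ref{cor:1schur}. There is no proof in the paper to compare your argument against, so the relevant question is whether your sketch is a sound plan that, if completed, would establish the conjecture.

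Your outline correctly identifies the flagged gap from the introduction's footnote --- that affine dual equivalence graphs are not known to satisfy an analogue of Axiom~(4$'$) strong enough to promote $\lsp_2$ to full Schur positivity of components --- and it correctly uses Theorem~\ref{thm:staircase} as the boundary-case template. Those are reasonable observations. However, the final step as written contains a substantive conceptual error. You ask to ``show that the generating function of a single connected component of an affine dual equivalence graph is a nonnegative integer combination of modified $k$-Schur functions $\widetilde s^{(k)}_\mu(X;q)$.'' But by Corollary~\ref{cor:phi.skew.cores} the spin (and hence cospin) is \emph{constant} on a connected component, so $F_{[S^*]}(X)$ is a pure quasisymmetric function in $X$ with no $q$-dependence, while $\widetilde s^{(k)}_\mu(X;q)$ is a genuinely $q$-graded object summed over \emph{all} of $\SST(\rho(\mu),n)$, i.e., over many components with different cospins. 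The conjectured identity $\LLT_{\boldsymbol{\lambda}} = \sum_\mu c^{(k)}_{\boldsymbol{\lambda},\mu}(q)\,\widetilde s^{(k)}_\mu(X;q)$ therefore cannot be verified component-by-component in the way you describe: what is actually needed is a \emph{global} $q$-weight-compatible matching that partitions the set of LLT components into blocks, each block in bijection with the full component set of some $\G^{(n)}_{\rho(\mu)}$ with matching cospins and matching quasisymmetric generating functions component-by-component. Even if every connected component of every affine dual equivalence graph had Schur-positive generating function (which is open), that would only reprove LLT-into-Schur positivity and would not by itself give LLT-into-$k$-Schur positivity.

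A secondary issue is that the ``merging'' map $\Theta$ is not well specified. You want its target to be an $n$-core skew shape $\nu/\mu$ whose ``$n$-quotient is $\boldsymbol{\lambda}$,'' but $n$-cores have empty $n$-quotient by definition, and strong tableaux (chains in the $n$-core poset, built by adding ribbons of length strictly less than $n$) are a different combinatorial object from the $n$-ribbon tableaux that underlie the $n$-quotient correspondence behind LLT polynomials. The paper gives only one instance where such a correspondence is explicit, namely the $k=1$ staircase case of Theorem~\ref{thm:staircase}; constructing $\Theta$ for general bandwidth-$k$ tuples so that it intertwines $\varphi^d_i$ with the case-heavy $\vp_i$ of Definition~\ref{defn:phi} and matches $\inv_d - a_{\boldsymbol{\lambda}}$ with $\cospin$ is itself a major open step, not a routine verification.
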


Using the expansion of Macdonald polynomials into certain LLT
polynomials, this conjecture implies that a Macdonald polynomial
indexed by a partition with at most $k$ rows is $k$-Schur
positive. This statement can be reformulated to recover the original
conjecture of Lascoux, Lapointe and Morse \eqref{eqn:H2kS} by
interchanging $q$ and $t$ and conjugating the indexing partition.

\begin{corollary}
  Assuming Conjecture~\ref{conj:LLT-kSchur}, if $\mu$ is a partition
  with at most $k$ rows, then
  \begin{equation}
    \widetilde{H}_{\mu}(X;q,t) = \sum_{\lambda}
    \widetilde{K}^{(k)}_{\lambda,\mu}(q,t) \
    \widetilde{s}^{(k)}_{\lambda} (X;q),
  \end{equation}
  where $\widetilde{K}^{(k)}_{\lambda,\mu}(q,t) \in \mathbb{N}[q,t]$. 
  In particular, Macdonald polynomials are $k$-Schur positive.
\end{corollary}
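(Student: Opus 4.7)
The plan is to combine Haglund's positive expansion of $\widetilde{H}_\mu$ into LLT polynomials with the hypothesized expansion of bandwidth-bounded LLT polynomials into modified $k$-Schur functions, after verifying that the bandwidth hypothesis is automatically satisfied whenever $\mu$ has at most $k$ rows.

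First, I would apply equation~\eqref{eqn:hag-llt} to write
\begin{equation*}
\widetilde{H}_{\mu}(X;q,t) \; = \; \sum_{D} t^{\maj_{\mu}(D)} \, q^{a_{\boldsymbol{\mu}_D}-a(D)} \, \LLT_{\boldsymbol{\mu}_D}(X;q),
\end{equation*}
where the sum runs over all possible $\mu$-descent sets $D$. Since $a_{\boldsymbol{\mu}_D} \geq a(D)$ by Remark~\ref{rmk:triple}, the prefactor $t^{\maj_{\mu}(D)}q^{a_{\boldsymbol{\mu}_D}-a(D)}$ lies in $\mathbb{N}[q,t]$ for every $D$. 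Next I would inspect the tuple $\boldsymbol{\mu}_D = (\mu_D^{(0)}, \ldots, \mu_D^{(\mu_1 - 1)})$: each $\mu_D^{(i)}$ is constructed from the $i$th column of $\mu$ by turning it into a ribbon whose cells correspond bijectively with the cells of that column. Hence the number of cells of $\mu_D^{(i)}$ is the length of the $i$th column of $\mu$, which is at most $\mu'_1 \leq k$ by hypothesis. A ribbon with at most $k$ cells occupies at most $k$ consecutive diagonals, so its bandwidth is at most $k$, and this holds for every component of $\boldsymbol{\mu}_D$.

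With the bandwidth hypothesis verified, Conjecture~\ref{conj:LLT-kSchur} applies to each LLT polynomial appearing on the right. Substituting the expansion
\begin{equation*}
\LLT_{\boldsymbol{\mu}_D}(X;q) \; = \; \sum_{\lambda} c^{(k)}_{\boldsymbol{\mu}_D,\lambda}(q) \, \widetilde{s}^{(k)}_{\lambda}(X;q), \qquad c^{(k)}_{\boldsymbol{\mu}_D,\lambda}(q) \in \mathbb{N}[q],
\end{equation*}
into Haglund's formula and collecting terms indexed by $\lambda$ gives
\begin{equation*}
\widetilde{H}_{\mu}(X;q,t) \; = \; \sum_{\lambda} \widetilde{K}^{(k)}_{\lambda,\mu}(q,t) \, \widetilde{s}^{(k)}_{\lambda}(X;q),
\end{equation*}
with
\begin{equation*}
\widetilde{K}^{(k)}_{\lambda,\mu}(q,t) \; = \; \sum_{D} t^{\maj_{\mu}(D)} \, q^{a_{\boldsymbol{\mu}_D}-a(D)} \, c^{(k)}_{\boldsymbol{\mu}_D,\lambda}(q).
\end{equation*}
Each summand is a product of elements of $\mathbb{N}[q,t]$, so $\widetilde{K}^{(k)}_{\lambda,\mu}(q,t) \in \mathbb{N}[q,t]$, as required.

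The proof is essentially an assembly of existing positive expansions, so there is no serious obstacle once the conjecture is taken as a hypothesis; the only nontrivial step is the bandwidth verification for the ribbons $\mu_D^{(i)}$, and even that follows immediately from the observation that these ribbons are in content-preserving bijection with columns of $\mu$. Thus the main content of the corollary is really conceptual: it identifies $\mu$ having at most $k$ rows as exactly the condition making Haglund's LLT expansion feed into the hypothesized $k$-Schur expansion.
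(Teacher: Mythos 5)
Your proof is correct and matches the argument the paper intends: apply Haglund's positive LLT expansion \eqref{eqn:hag-llt}, observe that each component $\mu_D^{(i)}$ is a ribbon whose cell count equals the length of the $i$th column of $\mu$ (hence bandwidth at most $k$ when $\mu$ has at most $k$ rows), and then invoke Conjecture~\ref{conj:LLT-kSchur} and collect coefficients, with the needed prefactor positivity $a_{\boldsymbol{\mu}_D} \geq a(D)$ already supplied by Remark~\ref{rmk:triple}. The bandwidth verification is exactly the right nontrivial check, and your identification of the hypothesis $\ell(\mu)\le k$ as precisely what makes the two expansions compose is the point the paper leaves implicit.
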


%
\appendix
%

\section{Examples}
\label{app:examples}

In this appendix we give the quasisymmetric and Schur expansion for
the $k$-Schur function $s^{(2)}_{(2,2,1)}$. We compute this using the
interval $[\emptyset, (5,3,1)]$ of the $3$-core poset
(Figure~\ref{fig:interval531}) and the corresponding \D graph
on all starred strong tableaux of shape $(5,3,1)$ regarded as a
$3$-core (Figure~\ref{fig:dgraph531}).

\begin{figure}[ht]
\begin{displaymath}
    \rnode{0}{\emptyset} \hspace{2\cellsize}
    \rnode{1}{\smtableau{\e}} \hspace{3\cellsize}
    \begin{array}{c}
      \rnode{2}{\smtableau{\e & \e}} \\[4\cellsize]
      \rnode{11}{\smtableau{\e \\ \e}}
    \end{array} \hspace{4\cellsize}
    \begin{array}{c}
      \rnode{31}{\smtableau{\e \\ \e & \e & \e}} \\[4\cellsize]
      \rnode{211}{\smtableau{\e \\ \e \\ \e & \e}}
    \end{array} \hspace{4\cellsize}
    \begin{array}{c}
      \rnode{42}{\smtableau{\e & \e \\ \e & \e &\e & \e}} \\[4\cellsize]
      \rnode{311}{\smtableau{\e \\ \e \\ \e & \e & \e}}
    \end{array} \hspace{3\cellsize}
    \rnode{531}{\raisebox{\smcellsize}{\smtableau{\e \\ \e & \e & \e
          \\ \e & \e & \e &\e & \e}}} 
  \psset{nodesep=5pt,linewidth=.1ex}
  \ncline{531}{42}  \nbput{1,t,t^2}
  \ncline{531}{311} \naput{1,t}
  \ncline{42}{31}   \nbput{1,t}
  \ncline{311}{31}  \nbput{1}
  \ncline{311}{211} \naput{1}
  \ncline{31}{2}    \nbput{1,t}
  \ncline{31}{11}   \naput[labelsep=1pt]{1}
  \ncline{211}{2}   \nbput[labelsep=1pt]{t}
  \ncline{211}{11}  \naput{1,t}
  \ncline{2}{1}     \nbput{1}
  \ncline{11}{1}    \naput{1}
  \ncline{1}{0}     \nbput{1}
\end{displaymath}
\caption{\label{fig:interval531} The poset of $3$-cores lying below
  $(5,3,1)$, with edge weights giving the spin contributions of
  possible starrings.}
\end{figure}

\begin{eqnarray*}
  s^{(2)}_{(2,2,1)} & = &  
  Q_{-+--} 
  \ + \ Q_{--+-} 
  \ + \ t \ Q_{++--} 
  \ + \ (1 + t) \ Q_{+--+} 
  \ + \ (2t + t^2) \ Q_{-++-} \\ & &
  \ + \ (1 + 2t + t^2) \ Q_{+-+-} 
  \ + \ (1 + 2t + t^2) \ Q_{-+-+} 
  \ + \ t \ Q_{--++} \\ & &
  \ + \ (t^2 + t^3) \ Q_{+++-}
  \ + \ (t + 2t^2 + t^3) \ Q_{++-+} 
  \ + \ (t + 2t^2 + t^3) \ Q_{+-++} \\ & &
  \ + \ (t^2 + t^3) \ Q_{-+++}
  \ + \ t^4 Q_{++++} \\[2ex]
  & = & s_{(2,2,1)} + t s_{(3,1,1)} + (t + t^2)
  s_{(3,2)} + (t^2 + t^3) s_{(4,1)} + t^4 s_{(5)}
\end{eqnarray*}

\begin{figure}[ht]
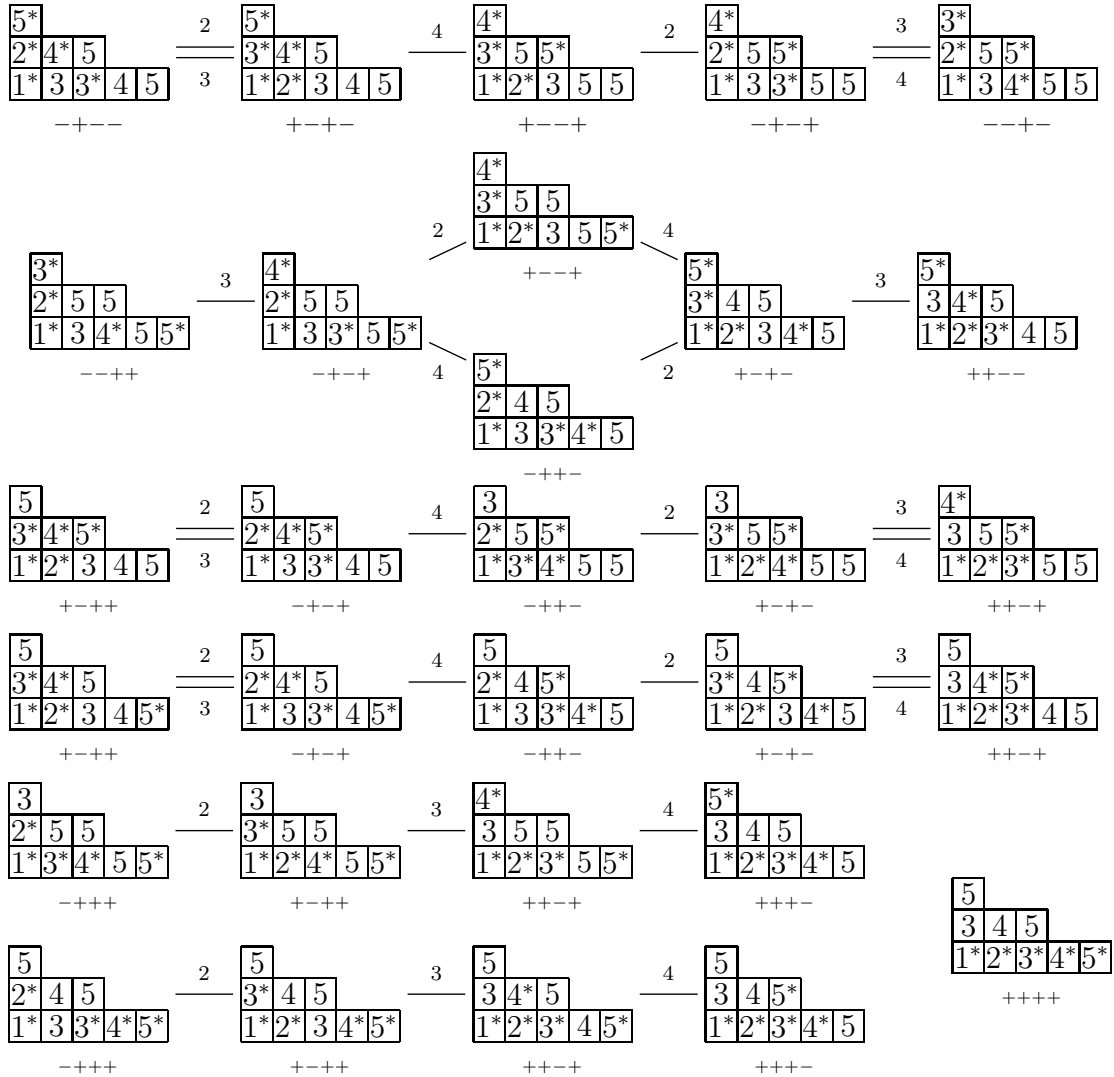

\begin{displaymath}
  \begin{array}{c}
    \begin{array}{ccccc}
      \stab{a}{5^* \\ 2^* & 4^* & 5 \\ 1^* & 3 & 3^* & 4 & 5}{-+--} \ & \
      \stab{b}{5^* \\ 3^* & 4^* & 5 \\ 1^* & 2^* & 3 & 4 & 5}{+-+-} \ & \
      \stab{c}{4^* \\ 3^* & 5 & 5^* \\ 1^* & 2^* & 3 & 5 & 5}{+--+} \ & \
      \stab{d}{4^* \\ 2^* & 5 & 5^* \\ 1^* & 3 & 3^* & 5 & 5}{-+-+} \ & \
      \stab{e}{3^* \\ 2^* & 5 & 5^* \\ 1^* & 3 & 4^* & 5 & 5}{--+-}
    \end{array} \\[2\cellsize]
    \psset{nodesep=3pt,linewidth=.1ex}
    \everypsbox{\scriptstyle}
    \ncline[offset=2pt] {a}{b} \naput{2}
    \ncline[offset=2pt] {b}{a} \naput{3}
    \ncline             {b}{c} \naput{4}
    \ncline             {c}{d} \naput{2}
    \ncline[offset=2pt] {d}{e} \naput{3}
    \ncline[offset=2pt] {e}{d} \naput{4}
    \begin{array}{ccccc}
      & & \stab{w}{4^* \\ 3^* & 5 & 5 \\ 1^* & 2^* & 3 & 5 & 5^*}{+--+} & & \\[-1\cellsize]
      \stab{u}{3^* \\ 2^* & 5 & 5 \\ 1^* & 3 & 4^* & 5 & 5^*}{--++} \ & \
      \stab{v}{4^* \\ 2^* & 5 & 5 \\ 1^* & 3 & 3^* & 5 & 5^*}{-+-+} & &
      \stab{y}{5^* \\ 3^* & 4 & 5 \\ 1^* & 2^* & 3 & 4^* & 5}{+-+-} \ & \
      \stab{z}{5^* \\ 3 & 4^* & 5 \\ 1^* & 2^* & 3^* & 4 & 5}{++--} \\[-1\cellsize]
      & & \stab{x}{5^* \\ 2^* & 4 & 5 \\ 1^* & 3 & 3^* & 4^* & 5}{-++-} & &
    \end{array} \\[2\cellsize]
    \psset{nodesep=3pt,linewidth=.1ex}
    \everypsbox{\scriptstyle}
    \ncline {u}{v}  \naput{3}
    \ncline {v}{w}  \naput{2}
    \ncline {v}{x}  \nbput{4}
    \ncline {w}{y}  \naput{4}
    \ncline {x}{y}  \nbput{2}
    \ncline {y}{z}  \naput{3}
    \begin{array}{ccccc}
      \stab{a2}{5 \\ 3^* & 4^* & 5^* \\ 1^* & 2^* & 3 & 4 & 5}{+-++} \ & \
      \stab{b2}{5 \\ 2^* & 4^* & 5^* \\ 1^* & 3 & 3^* & 4 & 5}{-+-+} \ & \
      \stab{c2}{3 \\ 2^* & 5 & 5^* \\ 1^* & 3^* & 4^* & 5 & 5}{-++-} \ & \
      \stab{d2}{3 \\ 3^* & 5 & 5^* \\ 1^* & 2^* & 4^* & 5 & 5}{+-+-} \ & \
      \stab{e2}{4^* \\ 3 & 5 & 5^* \\ 1^* & 2^* & 3^* & 5 & 5}{++-+}
    \end{array} \\[2\cellsize]
    \psset{nodesep=3pt,linewidth=.1ex}
    \everypsbox{\scriptstyle}
    \ncline[offset=2pt] {a2}{b2} \naput{2}
    \ncline[offset=2pt] {b2}{a2} \naput{3}
    \ncline             {b2}{c2} \naput{4}
    \ncline             {c2}{d2} \naput{2}
    \ncline[offset=2pt] {d2}{e2} \naput{3}
    \ncline[offset=2pt] {e2}{d2} \naput{4}
    \begin{array}{ccccc}
      \stab{a3}{5 \\ 3^* & 4^* & 5 \\ 1^* & 2^* & 3 & 4 & 5^*}{+-++} \ & \
      \stab{b3}{5 \\ 2^* & 4^* & 5 \\ 1^* & 3 & 3^* & 4 & 5^*}{-+-+} \ & \
      \stab{c3}{5 \\ 2^* & 4 & 5^* \\ 1^* & 3 & 3^* & 4^* & 5}{-++-} \ & \
      \stab{d3}{5 \\ 3^* & 4 & 5^* \\ 1^* & 2^* & 3 & 4^* & 5}{+-+-} \ & \
      \stab{e3}{5 \\ 3 & 4^* & 5^* \\ 1^* & 2^* & 3^* & 4 & 5}{++-+}
    \end{array} \\[2\cellsize]
    \psset{nodesep=3pt,linewidth=.1ex}
    \everypsbox{\scriptstyle}
    \ncline[offset=2pt] {a3}{b3} \naput{2}
    \ncline[offset=2pt] {b3}{a3} \naput{3}
    \ncline             {b3}{c3} \naput{4}
    \ncline             {c3}{d3} \naput{2}
    \ncline[offset=2pt] {d3}{e3} \naput{3}
    \ncline[offset=2pt] {e3}{d3} \naput{4}
    \begin{array}{lr}
    \begin{array}{cccc}
      \stab{h}{3 \\ 2^* & 5 & 5 \\ 1^* & 3^* & 4^* & 5 & 5^*}{-+++} \ & \
      \stab{i}{3 \\ 3^* & 5 & 5 \\ 1^* & 2^* & 4^* & 5 & 5^*}{+-++} \ & \
      \stab{j}{4^* \\ 3 & 5 & 5 \\ 1^* & 2^* & 3^* & 5 & 5^*}{++-+} \ & \
      \stab{k}{5^* \\ 3 & 4 & 5 \\ 1^* & 2^* & 3^* & 4^* & 5}{+++-}
    \end{array} 
    \psset{nodesep=3pt,linewidth=.1ex}
    \everypsbox{\scriptstyle}
    \ncline  {h}{i} \naput{2}
    \ncline  {i}{j} \naput{3}
    \ncline  {j}{k} \naput{4}
    & \hspace{.7\cellsize}
    \raisebox{-3\cellsize}{%
      $\stab{dot}{5 \\ 3 & 4 & 5 \\ 1^* & 2^* & 3^* & 4^* & 5^*}{++++}$}
    \\[-2\cellsize]
    \begin{array}{cccc}
      \stab{h2}{5 \\ 2^* & 4 & 5 \\ 1^* & 3 & 3^* & 4^* & 5^*}{-+++} \ & \
      \stab{i2}{5 \\ 3^* & 4 & 5 \\ 1^* & 2^* & 3 & 4^* & 5^*}{+-++} \ & \
      \stab{j2}{5 \\ 3 & 4^* & 5 \\ 1^* & 2^* & 3^* & 4 & 5^*}{++-+} \ & \
      \stab{k2}{5 \\ 3 & 4 & 5^* \\ 1^* & 2^* & 3^* & 4^* & 5}{+++-}
    \end{array} &
    \psset{nodesep=3pt,linewidth=.1ex}
    \everypsbox{\scriptstyle}
    \ncline  {h2}{i2} \naput{2}
    \ncline  {i2}{j2} \naput{3}
    \ncline  {j2}{k2} \naput{4}
  \end{array}
  \end{array}
\end{displaymath}
\caption{\label{fig:dgraph531}The \D graph on starred strong tableaux
  of shape $(5,3,1)$ regarded as a $3$-core.}
\end{figure}

%
%

\bibliographystyle{abbrv} 
\bibliography{references}

\end{document}